\documentclass[10pt]{amsart}
\usepackage[utf8]{inputenc}

\usepackage{amssymb,amsthm,amsmath}
\usepackage{mathrsfs}
\usepackage{enumerate}
\usepackage{graphicx,xcolor}
\usepackage{setspace}
\usepackage[hidelinks]{hyperref}
\usepackage{comment}
\usepackage{bbm}
\usepackage{float}

\usepackage{tikz}
\usetikzlibrary{arrows.meta,calc}
\usepackage{pgfplots}
\usepgfplotslibrary{fillbetween}
\usetikzlibrary{calc}
\pgfplotsset{compat=1.18}

\usepackage[paper=a4paper, left=1.2in, right=1.2in, top=1.2in, bottom=1.2in]{geometry}
\newcommand{\R}{\mathbb{R}}

\newcommand{\e}{\varepsilon}

\newcommand{\ek}{e}

\usepackage{dsfont}

\numberwithin{equation}{section} 

\newtheorem{theorem}{Theorem}[section] 
\newtheorem{lemma}[theorem]{Lemma}
\newtheorem{corollary}[theorem]{Corollary}
\newtheorem{proposition}[theorem]{Proposition}

\theoremstyle{remark}
\newtheorem{remark}[theorem]{Remark}
\newtheorem*{claim}{Claim} 
\newtheorem{conjecture}[theorem]{Conjecture}

\theoremstyle{definition}
\newtheorem{defn}[theorem]{Definition}

\hypersetup{
    colorlinks,
    linkcolor={brown!60!black},
    citecolor={red!50!black},
    urlcolor={orange!70!black}
}

\title{Restricted Hyperplane Sections of the Cross-Polytope  and the Simplex}

\author{Silouanos Brazitikos}
\author{Christos Pandis}

\address{Department of Mathematics \& Applied Mathematics, University of Crete, Voutes Campus, 70013 Heraklion, Greece}

\email{silouanb@uoc.gr,chrpandis@gmail.com}
\subjclass[2020]{Primary 52A40, 52A38; Secondary 52B11, 52A20, 26D15, 60E15}

\begin{document}
\begin{abstract}
We give a new proof of Webb's theorem on maximal central hyperplane sections of
the regular \(n\)-simplex \(\Delta_n\), viewed in its standard embedding in
\(\mathbb R^{n+1}\). A similar method also yields sharp maximal estimates for
non-central sections of \(\Delta^n\) whose distance \(d\) from the barycenter is small, namely $d< \sqrt{\frac{1}{(n+1)(2n+1)}}.$ Moreover, we obtain sharp volume estimates for central hyperplane sections of the
cross-polytope \(B_1^n\) that pass through the barycenter of a facet; see
Figure~\ref{fig:sectionsB13}.
\end{abstract}
\maketitle

\begin{figure}[ht]
\centering
\begin{tikzpicture}[scale=0.90, line join=round, line cap=round]

\newcommand{\pt}[4]{%
  \coordinate (#1) at ({2.55*(#2) + 0.22*(#3)}, {3.2*(#4) - 1.02*(#3) + 0.22*(#2)});
}

\pt{R}{ 1}{ 0}{ 0}
\pt{L}{-1}{ 0}{ 0}
\pt{F}{ 0}{ 1}{ 0}
\pt{B}{ 0}{-1}{ 0}
\pt{T}{ 0}{ 0}{ 1}
\pt{D}{ 0}{ 0}{-1}

\pt{O}{0}{0}{0}
\pt{Gp}{ 1/3}{ 1/3}{ 1/3}
\pt{Gm}{-1/3}{-1/3}{-1/3}


\pt{U1}{ 0.82}{ 0.82}{ 0.95}
\pt{U2}{ 0.82}{ 0.82}{-0.95}
\pt{U3}{-0.82}{-0.82}{-0.95}
\pt{U4}{-0.82}{-0.82}{ 0.95}

\pt{J1}{ 0}{ 0}{ 1}
\pt{J2}{ 1/2}{ 1/2}{ 0}
\pt{J3}{ 0}{ 0}{-1}
\pt{J4}{-1/2}{-1/2}{ 0}


\pt{S1}{ 1.10}{ 0.10}{ 0.60}
\pt{S2}{ 0.10}{ 1.10}{ 0.60}
\pt{S3}{-1.10}{-0.10}{-0.60}
\pt{S4}{-0.10}{-1.10}{-0.60}

\pt{I1}{ 2/3}{ 0}{ 1/3}
\pt{I2}{ 0}{ 2/3}{ 1/3}
\pt{I3}{-1/2}{ 1/2}{ 0}
\pt{I4}{-2/3}{ 0}{-1/3}
\pt{I5}{ 0}{-2/3}{-1/3}
\pt{I6}{ 1/2}{-1/2}{ 0}

\filldraw[
  fill=green!12,
  draw=green!50!black,
  fill opacity=0.14,
  line width=0.6pt
] (U1)--(U2)--(U3)--(U4)--cycle;

\filldraw[
  fill=blue!10,
  draw=blue!60!black,
  fill opacity=0.16,
  line width=0.6pt
] (S1)--(S2)--(S3)--(S4)--cycle;

\filldraw[
  fill=violet!35,
  draw=violet!85!black,
  fill opacity=0.55,
  line width=1.1pt
] (J1)--(J2)--(J3)--(J4)--cycle;

\filldraw[
  fill=orange!45,
  draw=orange!85!black,
  fill opacity=0.62,
  line width=1.1pt
] (I1)--(I2)--(I3)--(I4)--(I5)--(I6)--cycle;

\draw[densely dotted, line width=0.75pt] (L)--(B)--(R);
\draw[densely dotted, line width=0.75pt] (F)--(D);

\draw[line width=0.95pt] (L)--(T)--(R)--(D)--(L);
\draw[line width=0.95pt] (L)--(F)--(R);
\draw[line width=0.95pt] (T)--(F);

\fill[black] (Gp) circle (1.6pt);
\fill[black] (Gm) circle (1.6pt);
\fill[black] (O)  circle (1.4pt);

\node[above right=3pt] at (Gp) {$\left(\frac13,\frac13,\frac13\right)$};
\node[below left=3pt]  at (Gm) {$\left(-\frac13,-\frac13,-\frac13\right)$};
\node[below right=2pt] at (O) {$0$};

\node[fill=white, inner sep=1pt] at ($(U1)+(0.35,0.10)$) {$H_1:\ x+y-2z=0$};
\node[fill=white, inner sep=1pt] at ($(T)+(1.5,0.55)$) {$H_2:\ x-y=0$};

\end{tikzpicture}
\caption{The maximal (purple) and minimal (orange)  sections of $B_1^3$ passing through the barycenter of a facet.}
\label{fig:sectionsB13}
\end{figure}
\tableofcontents
\section{Introduction}

The study of lower-dimensional sections and projections of convex bodies is a central theme in asymptotic geometric analysis and high-dimensional geometry. Since the seminal works of Hensley \cite{hensley1979slicing} and Ball \cite{ball1986cube} on hyperplane sections of the unit cube in $\mathbb{R}^n$, determining the extremal hyperplane sections, or more generally lower-dimensional sections and projections, of classical convex bodies such as the unit balls of $\ell_p^n$ spaces \cite{meyer1988sections,koldobsky1998application} has proved to be highly nontrivial. This problem has motivated the development and interplay of powerful analytic, geometric, and probabilistic techniques.

This line of research has led to significant activity and to numerous variants and generalizations, including, for example, complex analogues of Ball's result, due to Oleszkiewicz and Pe\l czy\'{n}ski \cite{oleszkiewicz2000polydisc}, and of Meyer--Pajor's result, due to Koldobsky and Zymonopoulou \cite{koldobsky2003extremal}; extremal perimeter sections of the cube \cite{konig2019maximal}; and non-central sections \cite{konig2021non,konig2023non,konig2024non}, among others. Given the vast literature on related topics, we refer the interested reader to the recent comprehensive survey \cite{nayar2023extremal} for a detailed historical account, the current state of the art, and numerous intriguing open problems.

Let $B_p^n$ denote the unit ball of $\ell_p^n$. In particular, $B_1^n := \operatorname{conv}\{\pm e_1,\ldots,\pm e_n\}$
denotes the cross-polytope in $\mathbb{R}^n$. Meyer and Pajor \cite{meyer1988sections} were the first to show that
\[
p \mapsto \frac{\operatorname{vol}_k(B_p^n \cap H)}{\operatorname{vol}_k(B_p^k)}
\]
is nondecreasing on $[1,\infty)$ for every $k$-dimensional subspace $H$ of $\mathbb{R}^n$; this was later extended to $[0,\infty)$ independently by Barthe \cite{barthe1995mesures} and Caetano \cite{caetano1992weyl}. More precise results are known for hyperplane sections in the range $0<p<2$. In \cite{meyer1988sections}, Meyer and Pajor showed that the minimal-volume hyperplane sections of the cross-polytope $B_1^n$ are attained in the diagonal directions, and they conjectured that the same phenomenon holds throughout the entire range $0<p<2$. This conjecture was later confirmed by Koldobsky \cite{koldobsky1998application} via a strong Schur-convexity-type result, which also shows that the maximal-volume sections are attained by hyperplanes parallel to a facet.

By contrast, the maximal-volume hyperplane sections of $B_p^n$ for $p>2$ remain unknown. Oleszkiewicz showed in \cite{oleszkiewicz2004p} that Ball's upper bound for the cube does not extend to all $p>2$, while more recently Eskenazis, Nayar and Tkocz showed in \cite{eskenazis2024resilience} the stability of the cube as a slicing maximizer when $p$ is sufficiently large. For lower bounds in the case of $2$-dimensional sections, see \cite{chasapis2022slicing}; for an upper bound in the case where $k \mid n$, see \cite{barthe1995mesures}.

Let $\Delta_n$ denote the regular $n$-dimensional simplex of side length $\sqrt{2}$, viewed in its standard embedding in $\mathbb{R}^{n+1}$, namely
\[
\Delta_n=
\left\{
x\in\mathbb{R}^{n+1}:
x_j\geq 0,\ \sum_{j=1}^{n+1}x_j=1
\right\}.
\]

By a central hyperplane section of a convex body $K$, we mean a section passing through the centroid $\mathrm{bar}(K)$. Webb \cite{webb1996central} proved that the central hyperplane sections of $\Delta_n$ having maximal volume are precisely those for which the sectioning hyperplane contains $n-1$ vertices of $\Delta_n$. Quite recently, an intriguing probabilistic proof of Webb's bound appeared in \cite{tomasz}, where the authors established sharp bounds for the negative moments of centred log-concave random variables. 

In contrast to the case of symmetric convex bodies, for which maximal-volume sections by affine subspaces of a fixed dimension always pass through the barycenter by the Brunn--Minkowski inequality, the corresponding question for the simplex is genuinely nontrivial. Webb observed that, by combining two results of Ball, one obtains that, for each fixed $1\leq k\leq n$, among all $k$-dimensional affine sections of $\Delta_n$, the maximum volume is attained precisely by its $k$-dimensional faces; see also \cite{nayar2023extremal}. 

We remark that determining the central hyperplane sections of $\Delta_n$ having minimal volume remains an open problem. It was suggested, without a formal proof, by Filliman that the minimizer is given by the vector
\[
\left(
\sqrt{\frac{n}{n+1}},
-\frac{1}{\sqrt{n(n+1)}},
\ldots,
-\frac{1}{\sqrt{n(n+1)}}
\right);
\]
see also Conjecture 4 in \cite{nayar2023extremal} and the final chapter of \cite{webb1996central} for a discussion of Filliman's results. Brzezinski~\cite{brzezinski2013volume} proved a lower bound within a factor of $1.27$ of the conjectured minimum; more precisely, he showed that for every central hyperplane $H$, the $(n-1)$-dimensional volume of $\Delta_n\cap H$ is at least
\[
\frac{e}{2\sqrt{3}}\approx \frac{1}{1.27}
\]
times that of $\Delta_n\cap H_{\mathrm{facet}}$, where $H_{\mathrm{facet}}$ is any hyperplane passing through the centroid of $\Delta_n$ and parallel to a facet. Recently, Tang \cite{tang2024simplex} improved this estimate by proving a lower bound within a factor of $1-o(1)$ of the conjectured minimum.

Moreover, K\"onig \cite{konig2021non,konig2023non} determined the maximizers in
the non-central case for \(n\geq 5\). In the range
\[
\sqrt{\frac{n-2}{3(n+1)}}<t\leq \sqrt{\frac{n}{n+1}},
\]
the maximal sections are those parallel to a face of the simplex. K\"onig also treated several low-dimensional cases. In addition, he showed that the
same vector is a local maximizer for $c(n)<t\leq d(n),$ whereas it is a local minimizer for
\[
-\frac{1}{\sqrt{n(n+1)}}<t<c(n).
\]
Here
\[
c(n)=\frac{2n+1}{n(n+2)}\sqrt{\frac{n}{n+1}},
\qquad
d(n)=\sqrt{\frac{n-1}{2(n+1)}}.
\]
\subsection{Our contribution}

The main focus of this note is the study of extremal hyperplane sections of the
regular simplex. We view the regular \(n\)-simplex \(\Delta_n\) in its standard
embedding in \(\mathbb R^{n+1}\), and we consider central hyperplane sections,
that is, sections passing through the barycenter \(\operatorname{bar}(\Delta_n)\).
Equivalently, a central section is of the form \(\Delta_n\cap a^\perp\), where
\(a\in \mathbb S^n\) satisfies  $\sum_{j=1}^{n+1}a_j=0.$
Our main result in this direction is a new proof of Webb's theorem:
for every such \(a\),
\begin{equation}\label{eq:simplex-maximal-central}
\operatorname{vol}_{n-1}(\Delta_n\cap a^\perp)
\leq
\frac{\sqrt{n+1}}{(n-1)!}\frac{1}{\sqrt{2}}.
\end{equation}
Moreover, equality holds precisely for the hyperplanes containing \(n-1\) vertices
of \(\Delta_n\), or equivalently for normals proportional, up to permutation, to $\frac{1}{\sqrt2}(1,-1,0,\ldots,0).$

The proof proceeds by reducing the problem to the analysis of the extrema of the
section-volume functional
\[
a\mapsto \operatorname{vol}_{n-1}(\Delta_n\cap a^\perp),
\]
under the constraints
\[
\sum_{j=1}^{n+1}a_j=0
\qquad\text{and}\qquad
\sum_{j=1}^{n+1}a_j^2=1.
\]
We show that every local maximum happens at a configuration with a unique negative coordinate. Once this structural
reduction is established, Webb's bound follows in a transparent way from the
explicit interpolation formula  for simplex sections.

The same approach also applies to non-central, or affine, sections of the
simplex; equivalently, to hyperplane sections at a prescribed distance from
the barycenter.  We first write affine sections in the form $H_b^d\cap \Delta_n$, where
\[
        H_b^d:=\{x\in \mathbb R^{n+1}:\langle x,b\rangle=d\},
        \qquad
        \sum_{j=1}^{n+1}b_j=0,
        \qquad
        \sum_{j=1}^{n+1}b_j^2=1.
\]
With this normalization, the parameter \(d\) is exactly the distance from the
barycenter to the hyperplane.  We treat the affine problem by passing to the  reparametrization of
these sections (see Section~\ref{sec:2}). Namely, instead of maximizing
\[
        b\longmapsto
        \operatorname{vol}_{n-1}\bigl(\Delta_n\cap H_b^d\bigr),
        \qquad
        \sum_{j=1}^{n+1}b_j=0,
        \qquad
        \sum_{j=1}^{n+1}b_j^2=1,
\]
we equivalently consider hyperplanes $H_a$ where the normal vector \(a\) satisfies
\[
        \sum_{j=1}^{n+1}a_j=\kappa,
        \qquad
        \sum_{j=1}^{n+1}a_j^2=1.
\]
Thus we solve the reparametrized maximization problem
\[
        a\longmapsto
        \operatorname{vol}_{n-1}\bigl(\Delta_n\cap H_a\bigr),
        \qquad
        \sum_{j=1}^{n+1}a_j=\kappa,
        \qquad
        \sum_{j=1}^{n+1}a_j^2=1,
\]
for \(|\kappa|<1/\sqrt{2}\).

The parameter \(\kappa\) encodes the distance of the corresponding affine
section from the barycenter. More precisely, after possibly changing the sign
of the normal vector,
\[
        |\kappa|
        =
        \frac{(n+1)d}{\sqrt{1+(n+1)d^2}},
        \qquad\text{equivalently}\qquad
        d
        =
        \frac{|\kappa|}
        {\sqrt{(n+1)\bigl(n+1-\kappa^2\bigr)}}.
\]
Consequently, the range \(|\kappa|<1/\sqrt{2}\) corresponds exactly to
\[
        0\le d<\frac{1}{\sqrt{(n+1)(2n+1)}}.
\]
In this range we obtain sharp maximal estimates for the affine problem.

We then turn to hyperplane sections of the cross-polytope \(B_1^n\) under the
additional requirement that the hyperplane pass through the barycenter of one of
its facets; see Figure~\ref{fig:sectionsB13}. The cross-polytope \(B_1^n\) has
\(2^n\) congruent simplicial facets. More precisely, each facet is an
\((n-1)\)-dimensional regular simplex of the form
\[
F_{\varepsilon}
=
\operatorname{conv}\{\varepsilon_1 e_1,\ldots,\varepsilon_n e_n\},
\]
where \(\varepsilon_i\in\{-1,1\}\) for every \(i\), and
\[
\operatorname{bar}(F_{\varepsilon})
=
\left(\frac{\varepsilon_1}{n},\ldots,\frac{\varepsilon_n}{n}\right).
\]
Let \(a\in \mathbb S^{n-1}\) be such that the hyperplane \(a^\perp\) passes through
\(\operatorname{bar}(F_{\varepsilon})\) for some choice of \(\varepsilon\). By the
symmetry of \(B_1^n\), the corresponding extremal problem is independent of the
particular facet under consideration; see~\eqref{sections of B_1^n}. Thus we can pick $\e=(1,\ldots,1)$, or equivalently $\sum_{j=1}^n a_j=0$.

For these restricted central sections of \(B_1^n\), we  show that the minimizer exhibits a parity phenomenon: in even dimensions
it is given by the half-plus/half-minus vector, whereas in odd dimensions it is
given by the almost half-plus/half-minus vector. For the maximizer, we \emph{conjecture} a change in the extremal configuration at
dimension \(n=6\). More precisely, for \(n<6\) the conjectured maximizer coincides
with the configuration corresponding to Webb's extremizer, whereas for \(n\geq6\)
it is expected to be the vector corresponding to the conjectured minimizer for
\(\Delta_n\).

Our approach to the cross-polytope problem does not proceed by directly
dealing with the section-volume map
\[
        a\mapsto \operatorname{vol}_{n-1}(B_1^n\cap a^\perp)
\]
under the constraints
\[
        \sum_{j=1}^n a_j=0,
        \qquad
        \sum_{j=1}^n a_j^2=1.
\]
Instead, we use the volume formula to reduce the problem to sharp estimates for
the elementary symmetric polynomials of the squared coordinates
\[
        e_k(a_1^2,\ldots,a_n^2),
        \qquad k=1,\ldots,n.
\]
We derive sharp upper bounds for
\[
        e_k(x_1,\ldots,x_n)
        =
        \sum_{1\leq i_1<\cdots<i_k\leq n}
        x_{i_1}x_{i_2}\cdots x_{i_k},
\]
in the special case \(x_j=a_j^2\), where the vector \(a\) satisfies
\[
        \sum_{j=1}^n a_j=0,
        \qquad
        \sum_{j=1}^n a_j^2=1.
\]
These sharp bounds are then inserted into the section-volume formula for
\(B_1^n\), yielding the desired estimates for the cross-polytope sections. This also draws a connection with the recent results of
\cite{brazitikos2025sharp} on symmetric polynomials under related constraints.
Since elementary symmetric polynomials occur naturally in many areas of
mathematics, the auxiliary estimates obtained here are of independent interest.

\section{Preliminaries}\label{sec:2}
We work in $\mathbb{R}^n$ equipped with the standard inner product
$
\langle x,y\rangle=\sum_{j=1}^n x_j y_j,
$
for $x=(x_1,\ldots,x_n)$ and $y=(y_1,\ldots,y_n)$ in $\mathbb{R}^n$. We denote by
$
\|x\|_2=\sqrt{\langle x,x\rangle}
$
the induced Euclidean norm. More generally, for $p>0$ and $x=(x_1,\ldots,x_n)\in\mathbb{R}^n$, we define
$
\|x\|_p=\left(\sum_{j=1}^n |x_j|^p\right)^{1/p}.
$
We write $B_2^n$ for the closed Euclidean unit ball in $\mathbb{R}^n$, and $\mathbb{S}^{n-1}=\partial B_2^n$ for the unit sphere. Furthermore, $e_1,\ldots,e_n$ denote the standard basis vectors of $\mathbb{R}^n$, where $e_1=(1,0,\ldots,0)$, $e_2=(0,1,0,\ldots,0)$, and so on. For a subset $A\subseteq \mathbb{R}^n$, we write
$
A^\perp=\{x\in\mathbb{R}^n:\langle x,y\rangle=0 \text{ for all } y\in A\}
$
for its orthogonal complement. In particular, for a vector $a\in\mathbb{R}^n$, we write
$
a^\perp=\{x\in\mathbb{R}^n:\langle x,a\rangle=0\}
$
for the hyperplane orthogonal to $a$. More generally, for $a\in\mathbb{R}^n$ and $t\in\mathbb{R}$, we set
$
H_a^t=\{x\in\mathbb{R}^n:\langle x,a\rangle=t\},
$
which is a hyperplane orthogonal to $a$; if $a\in\mathbb{S}^{n-1}$, then $H_a^t$ lies at signed distance $t$ from the origin. For $1\leq k\leq n$, we denote by $\operatorname{vol}_k(\cdot)$ the $k$-dimensional Lebesgue measure, identified with the $k$-dimensional Hausdorff measure and normalized so that cubes of side length $1$ have volume $1$. Finally, recall that a body in $\mathbb{R}^n$ is a compact set with nonempty interior.

\subsection{Formulae for sections}

Let $K$ be a centrally symmetric star body in $\mathbb{R}^n$. Its Minkowski functional is defined by
$
\|x\|_K:=\inf\{r>0:x\in rK\}.
$
This functional is continuous on $\mathbb{R}^n$, positively homogeneous of degree $1$, and vanishes only at $x=0$. Koldobsky proved in \cite{koldobsky1998application} that the $(n-1)$-dimensional volume of the section of $K$ by the hyperplane $a^\perp$ can be expressed in terms of the Fourier transform, in the sense of distributions, of the function $x\mapsto \|x\|_K^{-n+1}$. More precisely, for every $a\in \mathbb{S}^{n-1}$,
\begin{equation}\label{sect star bodies}
    \operatorname{vol}_{n-1}(K\cap a^{\perp})=\frac{1}{\pi(n-1)}\bigl(\|x\|_K^{-n+1}\bigr)^{\wedge}(a).
\end{equation}

The Fourier transforms of powers of the norms of the spaces $\ell_p^n$ were computed in \cite{koldobsky1992schoenberg} for $0<p<\infty$ (and for $p=\infty$ in \cite{koldobsky1994characterization}). Combining these formulas with \eqref{sect star bodies} yields
\begin{equation}\label{section formula B_p}
    \operatorname{vol}_{n-1}(B_p^n\cap a^{\perp})
    =
    \frac{p}{\pi(n-1)\Gamma\!\left(\frac{n-1}{p}\right)}
    \int_{0}^{\infty}\prod_{j=1}^n \gamma_p(t a_j)\,dt,
\end{equation}
where $\gamma_p$ denotes the Fourier transform of the function $x\mapsto e^{-|x|^p}$ on $\mathbb{R}$. Formula \eqref{section formula B_p} was first established by Meyer and Pajor \cite{meyer1988sections} for $1\leq p\leq 2$, and was later extended to all $p>0$ by Koldobsky \cite{koldobsky1998application}.

In the special case $p=1$, corresponding to the cross-polytope, \eqref{section formula B_p} reduces to
\begin{equation}\label{sections of B_1^n}
    \operatorname{vol}_{n-1}(B_1^n\cap a^{\perp})
    =
    \frac{2^n}{\pi (n-1)!}
    \int_{0}^{\infty}\prod_{j=1}^n \frac{1}{1+a_j^2 t^2}\,dt.
\end{equation}
For the regular simplex $\Delta_n$, the following formula for hyperplane sections played a central role in Webb's work. If $a\in \mathbb{S}^{n}$ satisfies $\sum_{j=1}^{n+1} a_j=0$, then
\begin{equation}\label{Webb slicing f(0)}
    \operatorname{vol}_{n-1}(\Delta_n\cap a^{\perp})
    =
    \frac{\sqrt{n+1}}{(n-1)!}\,
    f_{\sum_{j=1}^{n+1} a_j \mathcal{E}_j}(0),
\end{equation}
where $f_{\sum_{j=1}^{n+1} a_j\mathcal{E}_j}$ denotes the density of the random variable $\sum_{j=1}^{n+1} a_j\mathcal{E}_j$, and $\mathcal{E}_1,\ldots,\mathcal{E}_{n+1}$ are independent standard exponential random variables, that is, random variables with density $e^{-x}\mathbbm{1}_{(0,\infty)}(x)$ on $\mathbb{R}$.

By Fourier inversion, \eqref{Webb slicing f(0)} can be rewritten as
\begin{equation}
    \operatorname{vol}_{n-1}(\Delta_n\cap a^{\perp})
    =
    \frac{\sqrt{n+1}}{(n-1)!}\frac{1}{2\pi}
    \int_{\mathbb{R}}\prod_{j=1}^{n+1}\frac{1}{1+i a_j t}\,dt.
\end{equation}

A further expression for $f_{\sum_{j=1}^{n+1} a_j\mathcal{E}_j}(0)$, valid under the constraints $\sum_{j=1}^{n+1} a_j=0$ and $\sum_{j=1}^{n+1} a_j^2=1$, can be obtained by contour integration; see \cite{webb1996central}. Namely,
\begin{equation}\label{f(0) contour}
    f_{\sum_{j=1}^{n+1} a_j\mathcal{E}_j}(0)
    =
    \sum_{\substack{j=1\\ a_j>0}}^{n+1}
    \frac{1}{a_j}
    \prod_{\substack{k=1\\ k\neq j}}^{n+1}\frac{a_j}{a_j-a_k}
    =
    -\sum_{\substack{j=1\\ a_j<0}}^{n+1}
    \frac{1}{a_j}
    \prod_{\substack{k=1\\ k\neq j}}^{n+1}\frac{a_j}{a_j-a_k}.
\end{equation}

Later, Dirksen \cite{dirksen2016sections} derived a closed formula for arbitrary hyperplane sections, not necessarily passing through the barycenter. For $a\in \mathbb{S}^{n}$,
\begin{equation}
   \operatorname{vol}_{n-1}(\Delta_n \cap H_a)
   =
   \frac{\sqrt{n+1-\left(\sum_{j=1}^{n+1}a_j\right)^2}}{(n-1)!}\frac{1}{2\pi}
   \int_{\mathbb{R}}\prod_{j=1}^{n+1}\frac{1}{1+i a_j s}\,ds.
\end{equation}

Now consider an arbitrary section of $\Delta_n$. Such a section can be represented in two distinct ways; (see Figures~\ref{fig:HatSection} and \ref{fig:HbSection}) as an affine section $H_a^t\cap \Delta_n$ with $\|a\|=1$, $\sum_{j=1}^{n+1}a_j=0$ and $|t|=\text{dist}(\text{bar}(\Delta^n),H_a^t\cap \Delta_n)$ or a section $H_b\cap \Delta_n$ where $b\in \R^{n+1}$ and $H_a^t\cap\Delta^n=H_b\cap \Delta_n$. 

One can convert the representations into each other. Indeed, for the one direction, let $H_b\cap \Delta_n$, with $\|b\|=1$ be an arbitrary section. Then, we can find $a\in \R^{n+1}$ such that $\|a\|=1$ and $\sum_{j=1}^{n+1}a_j=0$ so that 
$H_b\cap \Delta_n = H_a^t\cap \Delta_n.$ (see~\cite{dirksen2016sections}) Moreover, we also know that
\begin{equation}\label{dist from bary}
|t|= \operatorname{dist}(\operatorname{bar}(\Delta_n),H_b\cap\Delta_n)
 =
 \frac{\left|\sum_{j=1}^{n+1}b_j\right|}{\sqrt{(n+1)\left(n+1-\left(\sum_{j=1}^{n+1}b_j\right)^2\right)}}.
\end{equation}
Thus, prescribing the distance from the centroid is equivalent to prescribing the quantity
$ \kappa:=\sum_{j=1}^{n+1} b_j.$

Again, by contour integrating, we obtain, when all $x_j>0$ are pairwise distinct and  at least one is positive (or respectively for the other case)  (see \cite{dirksen2016sections})
\begin{align}\label{affine sect contoured}
\operatorname{vol}_{n-1}\left(\Delta_n \cap H_a \right)
   &=  \frac{\sqrt{n+1-\left(\sum_{j=1}^{n+1}a_j\right)^2}}{(n-1)!}\sum_{a_j>0}\frac{1}{a_j}\prod_{k\neq j}\frac{a_j}{a_j-a_k}\\
    &=-\frac{\sqrt{n+1-\left(\sum_{j=1}^{n+1}a_j\right)^2}}{(n-1)!}\sum_{a_j<0}\frac{1}{a_j}\prod_{k\neq j}\frac{a_j}{a_j-a_k} .
\end{align}

Finally, we note that all of the above formulas are invariant under permutations of the coordinates, and therefore so are the corresponding extremizers.

\begin{figure}[ht]
\centering

\begin{minipage}{0.48\textwidth}
\centering
\begin{tikzpicture}[
    x=0.95cm,y=0.95cm,
    line join=round,line cap=round,
    axis/.style={-{Stealth[length=2.2mm]},draw=gray!35,line width=0.45pt},
    simplex/.style={draw=black,line width=0.7pt},
    ray/.style={draw=black,line width=0.7pt},
    section/.style={draw=black,line width=1.8pt},
    vec/.style={-{Stealth[length=2.8mm]},draw=black,line width=0.95pt}
]

\coordinate (O)  at (0,0);

\coordinate (X3) at (0,4.10);
\coordinate (X2) at (4.55,2.05);
\coordinate (X1) at (4.55,-2.05);

\coordinate (AX3) at ($(O)!1.14!(X3)$);
\coordinate (AX2) at ($(O)!1.16!(X2)$);
\coordinate (AX1) at ($(O)!1.16!(X1)$);

\draw[axis] (O) -- (AX1) node[right] {$x_1$};
\draw[axis] (O) -- (AX2) node[right] {$x_2$};
\draw[axis] (O) -- (AX3) node[above] {$x_3$};

\draw[simplex] (X3)--(X2)--(X1)--cycle;

\coordinate (A1) at ($(X3)!0.58!(X2)$);
\coordinate (A2) at ($(X3)!0.58!(X1)$);
\draw[ray] (O)--(A1);
\draw[ray] (O)--(A2);
\draw[section] (A1)--(A2);

\coordinate (A1) at ($(X3)!0.58!(X2)$);
\coordinate (A2) at ($(X3)!0.58!(X1)$);
\draw[ray] (O)--(A1);
\draw[ray] (O)--(A2);
\draw[section] (A1)--(A2);

\coordinate (T1) at ($(X3)!0.22!(X2)$);
\coordinate (T2) at ($(X3)!0.22!(X1)$);
\coordinate (Ot) at ($(O)!0.58!(X3)$);

\draw[ray] (Ot)--(T1);
\draw[ray] (Ot)--(T2);
\draw[section] (T1)--(T2);

\draw[vec] (O) -- (-1.75,1.45);
\node at (-0.45,0.78) {$a$};

\node at ($(A1)!0.50!(A2)+(1.05,1.38)$) {$H_a\cap S$};
\node at ($(T1)!0.50!(T2)+(0.00,0.92)$) {$H_a^t\cap S$};
\node at ($(O)!0.55!(A2)+(0.20,-0.30)$) {$H_a$};
\node[left] at ($(Ot)!0.55!(T2)+(-0.48,0.08)$) {$H_a^t$};
\end{tikzpicture}

\vspace{0.35em}
\refstepcounter{figure}\label{fig:HatSection}
Figure \thefigure.: Section $H_a^t\cap S$

\end{minipage}
\hfill
\begin{minipage}{0.48\textwidth}
\centering
\begin{tikzpicture}[
    x=0.95cm,y=0.95cm,
    line join=round,line cap=round,
    axis/.style={-{Stealth[length=2.2mm]},draw=gray!35,line width=0.45pt},
    simplex/.style={draw=black,line width=0.7pt},
    ray/.style={draw=black,line width=0.7pt},
    section/.style={draw=black,line width=1.8pt},
    vec/.style={-{Stealth[length=2.8mm]},draw=black,line width=0.95pt}
]

\coordinate (O)  at (0,0);

\coordinate (X3) at (0,4.10);
\coordinate (X2) at (4.55,2.05);
\coordinate (X1) at (4.55,-2.05);

\coordinate (AX3) at ($(O)!1.14!(X3)$);
\coordinate (AX2) at ($(O)!1.16!(X2)$);
\coordinate (AX1) at ($(O)!1.16!(X1)$);

\draw[axis] (O) -- (AX1) node[right] {$x_1$};
\draw[axis] (O) -- (AX2) node[right] {$x_2$};
\draw[axis] (O) -- (AX3) node[above] {$x_3$};

\draw[simplex] (X3)--(X2)--(X1)--cycle;

\coordinate (A1) at ($(X3)!0.60!(X2)$);
\coordinate (A2) at ($(X3)!0.60!(X1)$);
\draw[ray] (O)--(A1);
\draw[ray] (O)--(A2);
\draw[section] (A1)--(A2);

\coordinate (B1) at ($(X3)!0.24!(X2)$);
\coordinate (B2) at ($(X3)!0.24!(X1)$);
\draw[ray] (O)--(B1);
\draw[ray] (O)--(B2);
\draw[section] (B1)--(B2);

\draw[vec] (O) -- (-1.35,1.35);
\draw[vec] (O) -- (-1.95,0.45);
\node at (-0.45,0.72) {$a$};
\node at (-1.00,0.33) {$b$};

\node at ($(A1)!0.50!(A2)+(1.20,1.20)$) {$H_a\cap S$};
\node at ($(B1)!0.50!(B2)+(0.18,0.92)$) {$H_b\cap S$};
\node at ($(O)!0.58!(A2)+(0.22,-0.22)$) {$H_a$};
\node at ($(O)!0.56!(B2)+(-0.48,0.10)$) {$H_b$};

\end{tikzpicture}

\vspace{0.35em}
\refstepcounter{figure}\label{fig:HbSection}
Figure \thefigure.: Section $H_b\cap S$
\end{minipage}

\end{figure}

\subsection{Schur-convexity and majorization}

Schur-convexity-type arguments have recently appeared in probabilistic settings
(see, for example, \cite{chasapis2024haagerup,chasapis2023entropies}), leading to
sharp results ranging from moment comparison inequalities to entropy inequalities.
For a concise exposition on majorization and Schur-convexity, we refer to
Chapter~II of \cite{bhatia1997matrix}. We recall here the basic notions that
will be used throughout the paper.

\begin{defn}[Decreasing rearrangement]
Given $x = (x_1, \ldots, x_n)\in\R^n$, we denote by
$x^{\ast} = (x_1^{\ast}, \ldots, x_n^{\ast})$ its decreasing rearrangement, i.e.
\[
x_1^{\ast} \ge x_2^{\ast} \ge \cdots \ge x_n^{\ast}.
\]
\end{defn}

\begin{defn}[Majorization]
For any two vectors $x, y \in \mathbb{R}^n$, we say that $x$ is majorized by $y$,
and write $x \prec y$, if
\[
\sum_{i=1}^n x_i = \sum_{i=1}^n y_i
\quad \text{and} \quad
\sum_{i=1}^k x_i^{\ast} \le \sum_{i=1}^k y_i^{\ast}
\quad \text{for every } k = 1, 2, \ldots, n.
\]
\end{defn}

As a direct consequence, for every vector $a = (a_1, \ldots, a_n) \in \mathbb{R}^n_+$
such that $\sum_{i=1}^n a_i = 1$, we have
\begin{equation}\label{maj sum=1}
\left( \frac{1}{n}, \ldots, \frac{1}{n} \right)
\prec (a_1, \ldots, a_n)
\prec (1, 0, \ldots, 0).
\end{equation}
More specifically, if $\sum_{i=1}^n a_i^2 = 1$, then
\begin{equation}\label{maj seq}
\left( \frac{1}{n}, \ldots, \frac{1}{n} \right)
\prec (a_1^2, \ldots, a_n^2)
\prec (1, 0, \ldots, 0).
\end{equation}

\begin{defn}[Schur-convexity/concavity]
A function $f : \mathbb{R}^n \to \mathbb{R}$ is said to be Schur-convex (resp.
Schur-concave) if $x \prec y$ implies $f(x) \le f(y)$ (resp. $f(x) \ge f(y)$).
\end{defn}

A central criterion for establishing the Schur-convexity or Schur-concavity of
a function is due to Schur and Ostrowski.

\begin{theorem}[Schur--Ostrowski]\label{thm:Schur-Ostrowski}
Let $f:\mathbb{R}^n\rightarrow \mathbb{R}$ be a symmetric function with
continuous partial derivatives. Then $f$ is Schur-convex (resp. Schur-concave)
if and only if
\[
(x_i - x_j) \left( \frac{\partial f}{\partial x_i} -
\frac{\partial f}{\partial x_j} \right) \ge 0
\quad (\text{resp. } \le 0)
\]
for all $x \in \mathbb{R}^n$ and for all $1 \le i, j \le n$.
\end{theorem}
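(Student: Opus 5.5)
The plan is to prove both directions by reducing majorization to elementary two-coordinate moves, the ``Robin Hood'' or $T$-transforms, and then differentiating $f$ along such a move. We treat the Schur-convex case; the Schur-concave case follows by applying it to $-f$.

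\emph{Necessity.} Assume $f$ is Schur-convex, fix $x$ and indices $i\neq j$, and by symmetry of the expression suppose $x_i>x_j$ (if $x_i=x_j$ there is nothing to prove). For $0\le t\le (x_i-x_j)/2$ set
\[
x(t)=x-t e_i+t e_j .
\]
This $x(t)$ is a convex combination of $x$ and its image under the transposition of coordinates $i,j$. It therefore has the same sum as $x$, and its partial sums of decreasing rearrangements are dominated by those of $x$, so $x(t)\prec x$. Schur-convexity gives $f(x(t))\le f(x)$ for small $t>0$. Dividing by $t$ and letting $t\downarrow 0$ yields $\partial_j f(x)-\partial_i f(x)\le 0$, which is exactly the stated inequality.

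\emph{Sufficiency.} The key combinatorial input is the classical lemma of Hardy--Littlewood--P\'olya: if $x\prec y$, then $x$ is obtained from $y$ by finitely many (at most $n-1$) $T$-transforms. A $T$-transform replaces a pair $(y_i,y_j)$ by $(\lambda y_i+(1-\lambda)y_j,\ (1-\lambda)y_i+\lambda y_j)$ with $\lambda\in[0,1]$ and leaves all other coordinates fixed. I would prove this by induction on the number of coordinates in which $x^\ast$ and $y^\ast$ differ, after sorting both vectors decreasingly.
\begin{itemize}
\item Choose the largest $j$ with $y_j>x_j$ and the smallest $k>j$ with $y_k<x_k$; these exist by the majorization inequalities and equality of sums.
\item Transfer $\delta=\min(y_j-x_j,\ x_k-y_k)$ from coordinate $j$ to coordinate $k$. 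This produces $y'$ with $x\prec y'\prec y$, and $y'$ agrees with $x$ in at least one more coordinate.
\end{itemize}
Checking that $x\prec y'$ still holds, i.e.\ that the partial sums are not violated between $j$ and $k$, is the step I expect to require the most care. It follows from the extremal choice of $j$ and $k$: in between, $y_l=x_l$, and the partial sums of $y$ exceed those of $x$ by at least $\delta$ on $[j,k)$.

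Given the lemma, it suffices to show $f$ does not increase along a single $T$-transform. By symmetry of $f$ we may assume the transform is $y\mapsto y(s)=y-se_i+se_j$ with $y_i\ge y_j$ and $0\le s\le (y_i-y_j)/2$, since $\lambda$ and $1-\lambda$ give the same vector up to a transposition. Put $g(t)=f(y(t))$. Then
\[
g'(t)=\partial_j f(y(t))-\partial_i f(y(t)),
\]
and along the path $y(t)_i-y(t)_j=y_i-y_j-2t\ge 0$. The hypothesis $(y(t)_i-y(t)_j)(\partial_i f-\partial_j f)\ge 0$ gives $g'(t)\le 0$ whenever $y(t)_i>y(t)_j$. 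The only exception is the endpoint $t=(y_i-y_j)/2$, which does not affect monotonicity by continuity of $g$. Hence $g(s)\le g(0)$, i.e.\ $f(y(s))\le f(y)$. Chaining these inequalities over the finitely many $T$-transforms gives $f(x)\le f(y)$, which is Schur-convexity.
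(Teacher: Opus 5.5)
The paper does not prove this statement. It quotes it as a classical result with a pointer to Chapter~II of Bhatia's \emph{Matrix Analysis}, so there is no in-paper argument to compare against.

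Your argument is the standard textbook proof from that reference, and it is correct. Necessity comes from differentiating $f$ along $x-te_i+te_j$. Sufficiency reduces $x\prec y$ to finitely many $T$-transforms via Hardy--Littlewood--P\'olya, then uses the sign condition to show that $t\mapsto f(y-te_i+te_j)$ is nonincreasing on $[0,(y_i-y_j)/2]$.

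Two details are worth stating explicitly when you write it up:
\begin{enumerate}
\item In the necessity part, $x(t)\prec x$ holds because $v\mapsto\sum_{l\le m}v^{\ast}_l$ is convex (a maximum of linear functionals) and permutation invariant.
\item In the Hardy--Littlewood--P\'olya step, the new vector $y'$ is again in decreasing order. This follows from $y_l=x_l$ for $j<l<k$, together with $y_j-\delta\ge x_j$ and $y_k+\delta\le x_k$. It is what lets the induction on the number of coordinates where $x^{\ast}$ and $y^{\ast}$ differ proceed without re-sorting.
\end{enumerate}
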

\subsection{Elementary Symmetric Polynomials}
The elementary symmetric polynomials in $n$ variables $x_1,\ldots,x_n$, denoted by $e_k(x_1,\ldots,x_n)$, for $k=1,\ldots,n$ are defined by \[
        e_k(x_1,\ldots,x_n)
        =
        \sum_{1\le i_1<\cdots<i_k\le n}
        x_{i_1}x_{i_2}\cdots x_{i_k}.
\]
From a generating function standpoint (see~\cite{macdonald1995symmetric})
\[
        \prod_{j=1}^n(1+x_jt)
        =
        \sum_{k=0}^n e_k(x_1,\ldots,x_n)t^k.
\]
It is well established that 
\begin{proposition}\label{e_k Schur}
    The function $e_k(x_1,\ldots,x_n)$  is Schur concave on $\mathbb{R}^n_+$.
\end{proposition}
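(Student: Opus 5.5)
We verify the Schur--Ostrowski criterion (Theorem~\ref{thm:Schur-Ostrowski}) directly on the open orthant $\mathbb{R}^n_+$; the closed orthant then follows by continuity of $e_k$. Since $e_k$ is a symmetric polynomial, it has continuous partial derivatives, so the criterion applies. For $k=1$, $e_1$ is linear and symmetric, hence constant on majorization classes and trivially Schur concave; so we assume $k\ge 2$.

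The key computation is the partial derivative. Fix indices $i\neq j$. Separating the terms of $e_k$ by whether they contain $x_i$, $x_j$, both, or neither, we write
\[
e_k(x)=x_ix_j\,E_{k-2}+(x_i+x_j)\,E_{k-1}+E_k,
\]
where $E_m:=e_m(x_l:\ l\neq i,j)$ is the elementary symmetric polynomial in the remaining $n-2$ variables, with the conventions $E_0=1$ and $E_m=0$ for $m<0$ or $m>n-2$. Differentiating gives
\[
\frac{\partial e_k}{\partial x_i}=x_jE_{k-2}+E_{k-1},\qquad \frac{\partial e_k}{\partial x_j}=x_iE_{k-2}+E_{k-1}.
\]

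Subtracting and multiplying by $x_i-x_j$ yields
\[
(x_i-x_j)\left(\frac{\partial e_k}{\partial x_i}-\frac{\partial e_k}{\partial x_j}\right)=-(x_i-x_j)^2E_{k-2}.
\]
On $\mathbb{R}^n_+$ every $E_{k-2}$ is a sum of products of nonnegative numbers, so $E_{k-2}\ge 0$ and the expression is $\le 0$. By Theorem~\ref{thm:Schur-Ostrowski}, $e_k$ is Schur concave.

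The only subtlety is that the Schur--Ostrowski theorem is stated for functions on all of $\mathbb{R}^n$, whereas the sign condition here holds only on the orthant. The same criterion holds on any symmetric convex domain such as $\mathbb{R}^n_+$, with the same proof: connect $x\prec y$ by a chain of T-transforms, which stay in the convex symmetric set, and integrate the derivative along each segment.
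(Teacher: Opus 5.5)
Your proof is correct, but there is nothing in the paper to compare it against: the paper gives no proof of this proposition and only asserts it as ``well established''. Your Schur--Ostrowski verification therefore supplies the argument the paper omits.

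The paper does already contain your key identity. The formula $\partial_i e_k-\partial_j e_k=(x_j-x_i)\,e_{k-2}(x_l:\ l\neq i,j)$ is exactly Lemma~\ref{lem:diff}: there $E_i=e_{k-1}(x_{\widehat{i}})=\partial e_k/\partial x_i$, and $T_{ij}$ is your $E_{k-2}$. So combining that lemma with Theorem~\ref{thm:Schur-Ostrowski} gives precisely your proof.

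Your remark about the domain is needed, not just cautious. As the paper states it, Theorem~\ref{thm:Schur-Ostrowski} requires the sign condition on all of $\mathbb{R}^n$. There it fails for $k\ge 3$, because $e_{k-2}$ of the remaining variables can be negative once negative entries are allowed. The version for symmetric convex domains is what actually applies, and your T-transform sketch justifies it. Intermediate points stay in the orthant, being convex combinations of a point and a transposition of it. By symmetry you may take each T-transform not to cross the midpoint of the two coordinates involved, so $x_i-x_j$ keeps its sign along the segment.

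The passage from the open to the closed orthant is also fine. For example, $x\prec y$ implies $x+\varepsilon\mathbf{1}\prec y+\varepsilon\mathbf{1}$; then let $\varepsilon\to 0$.
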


\section{Simplex slicing via Lagrange multipliers} 

Our main structural result shows that, up to changing the orientation of the normal
vector, since $a,-a$ generate the same hyperplane, any maximizer has exactly one negative coordinate. For convenience, we work with \(n\) coordinates instead of \(n+1\). 

We first carry out the argument on a fixed support, ignoring zero coordinates since it just reduces to lower dimension problem.
 In the end we also add a zero group.

\begin{theorem}\label{thm:mu-positive-one-negative}
Let
\[
I_n(a):=\int_{-\infty}^{\infty}\prod_{j=1}^n \frac{1}{1+i t a_j}\,dt,
\qquad
\mathcal{M}:=\Big\{a\in\mathbb R^n:\ \sum_{j=1}^n a_j=0,\ \sum_{j=1}^n a_j^2=1\Big\}.
\]
Let \(a\in\mathcal{M}\) be a smooth local maximum of \(I_n\) on \(\mathcal{M}\), and let
\(\lambda,\mu\in\mathbb R\) be the Lagrange multipliers:
\[
\partial_{a_j}I_n(a)=2\lambda a_j+\mu,
\qquad j=1,\dots,n.
\]
Assume that \(\mu>0\).
Then \(a\) has exactly one negative coordinate.
\end{theorem}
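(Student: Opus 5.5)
The plan is to combine the first-order (Lagrange) conditions with the second-order condition along the direction $e_i-e_j$, for two hypothetical negative coordinates $a_i,a_j<0$. The goal is a contradiction with $\mu>0$. Since $\sum_j a_j=0$ and $a\neq 0$, $a$ has at least one negative coordinate, so ruling out two of them proves the statement.

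\emph{Step 1: algebraic identities for the derivatives of $I_n$.} Write $\Phi(t)=\prod_{k}(1+ita_k)^{-1}$. Then
\[
\partial_{a_j}I_n(a)=-\int_{\mathbb R}\frac{it}{1+ita_j}\,\Phi(t)\,dt .
\]
Since $\frac{ita_j}{1+ita_j}=1-\frac{1}{1+ita_j}$, we get
\[
a_j\,\partial_{a_j}I_n=-I_n+J_j,\qquad J_j:=\int_{\mathbb R}\frac{\Phi(t)}{1+ita_j}\,dt .
\]
Probabilistically, $J_j$ is $2\pi$ times the density at $0$ of $\sum_k a_k\mathcal E_k+a_j\mathcal E'$, that is, the $j$-th exponential is replaced by a Gamma$(2)$ variable. $I_n$ is homogeneous of degree $-1$ (substitute $t\mapsto t/s$), so Euler's identity gives $\sum_j a_j\partial_{a_j}I_n=-I_n$. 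Multiplying the Lagrange equations by $a_j$ and summing, using $\sum a_j=0$ and $\sum a_j^2=1$, yields $2\lambda=-I_n<0$. The Lagrange equations then read
\[
J_j=(1-2a_j^2)\,I_n+\mu a_j ,\qquad j=1,\dots,n .
\]
This is the form in which $\mu$ enters.

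\emph{Step 2: second-order condition.} The direction $v=e_i-e_j$ is tangent to $\{\sum a_k=0\}$. Tangency to the sphere requires $\langle v,a\rangle=0$, so I will either use the tangent vector $v-\langle v,a\rangle a$, or work directly with the Hessian of the Lagrangian $L=I_n-\lambda\sum a_k^2-\mu\sum a_k$ restricted to $T_a\mathcal M$. Local maximality gives
\[
\partial_{ii}I_n+\partial_{jj}I_n-2\partial_{ij}I_n\le 4\lambda=-2I_n ,
\]
up to a correction term involving $\langle v,a\rangle=a_i-a_j$ if I use the projected direction. All second derivatives are integrals of $\Phi$ against $\frac{-t^2}{(1+ita_i)(1+ita_j)}$ or $\frac{-2t^2}{(1+ita_i)^2}$. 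Apply the partial fraction
\[
\frac{1}{(1+ita_i)(1+ita_j)}=\frac{1}{a_i-a_j}\Big(\frac{a_i}{1+ita_i}-\frac{a_j}{1+ita_j}\Big),
\]
together with $\frac{ita}{1+ita}=1-\frac1{1+ita}$. This reduces $\partial_{ij}I_n$ to a divided difference of the one-index quantities $J_i,J_j$ (and $I_n$). Likewise $\partial_{ii}I_n$ reduces to $I_n$, $J_i$ and $K_i:=\int\Phi(t)(1+ita_i)^{-2}dt$, which is the Gamma$(3)$-replaced density. We may assume $a_i\neq a_j$ (smooth point, distinct coordinates), and the equal case follows by continuity.

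\emph{Step 3: the contradiction, which is the main obstacle.} Substitute the first-order relations $J_j=(1-2a_j^2)I_n+\mu a_j$ into the reduced second-order inequality. The divided difference $\frac{J_i-J_j}{a_i-a_j}$ becomes $-2(a_i+a_j)I_n+\mu$. Note that $-2(a_i+a_j)>0$ precisely when both coordinates are negative, and $\mu>0$, so this term has a definite positive sign. The remaining terms involve $K_i,K_j$. To control them I will use the fact that, for negative $a_i$, the Gamma-replaced variables are stochastically shifted left. Concretely, $f_{S+a\mathcal E'}(0)=\mathbb E f_S(-a\mathcal E')$, and log-concavity of $f_S$ (sums of independent exponentials are log-concave) gives monotone comparisons between $I_n$, $J_i$ and $K_i$. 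The expected outcome is that the left-hand side of the second-order inequality strictly exceeds $-2I_n$ whenever $a_i,a_j<0$ and $\mu>0$, so $v$ would be an ascent direction. The sign bookkeeping in this last comparison is the delicate point. If the log-concavity comparison is too weak, the fallback is to use the contour formula \eqref{f(0) contour} summed over negative coordinates. With two negative coordinates there are then exactly two terms on that side, which can be differentiated explicitly in $a_i$ and $a_j$.
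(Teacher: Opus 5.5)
\textbf{Verdict: genuine gap.} Your first option for the tangent direction is the right one. Up to the factor $-1/(a_i-a_j)$, the vector $w=e_i-e_j-(a_i-a_j)a$ is exactly the vector $u$ of Step 3 in the paper. But you never evaluate the second-order form along $w$, and you look for the sign in the wrong place.

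\begin{itemize}
\item \emph{The off-diagonal entry carries no sign.} The partial-fraction identity gives $\partial_{ij}I_n=(\partial_iI_n-\partial_jI_n)/(a_i-a_j)=2\lambda=-I$ whenever $a_i\neq a_j$. (Also, $2\lambda=-I$ gives $J_j=(1-a_j^2)I+\mu a_j$, not $(1-2a_j^2)I+\mu a_j$.)
\item \emph{The tangency correction is not negligible.} Homogeneity gives $D^2I_n(a)[a,\cdot\,]=-2\nabla I_n(a)$ and $D^2I_n(a)[a,a]=2I$. Writing $Q$ for the Hessian of the Lagrangian as a quadratic form, this yields $Q(w)=Q(e_i-e_j)-3I(a_i-a_j)^2$, and the correction has the wrong sign.
\item \emph{The missing idea: the diagonal entries are determined exactly by first-order data.} Differentiating Euler's identity gives $\sum_m a_m\partial_{im}I_n=-2\partial_iI_n$; this is the paper's integration-by-parts identity \eqref{eq:Ji-general}. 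If no other coordinate equals $a_i$, then every $\partial_{im}I_n=-I$, so $\partial_{ii}I_n=I-2\mu/a_i$. This is where the sign enters. It gives
\[
Q(w)=6I-3I(a_i-a_j)^2-2\mu\Bigl(\frac1{a_i}+\frac1{a_j}\Bigr)>0
\]
for $a_i,a_j<0$, since $(a_i-a_j)^2<a_i^2+a_j^2\le 1$.
\end{itemize}
Log-concavity comparisons between $I$, $J_i$, $K_i$ cannot replace this. What actually enters is $\partial_{ii}I_n=2(I-2J_i+K_i)/a_i^2$, a second difference divided by $a_i^2$. The proposal gives no indication how one-sided comparisons would bound it, and the contour-formula fallback is likewise unspecified.

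\textbf{The equal case.} The claim that ``the equal case follows by continuity'' is not valid. The first- and second-order conditions hold only at the given point $a$, which you cannot perturb. You need an independent argument that no negative value is repeated. You need it even when $a_i\neq a_j$, because $\partial_{ii}I_n=I-2\mu/a_i$ uses $\partial_{im}I_n=-I$ for \emph{all} $m\neq i$.

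The paper's Step 2 supplies this. If $a_p=a_q=r$, then $e_p-e_q$ is genuinely tangent, which gives $K_{rr}\ge I$ for $K_{rr}=-\partial_{pq}I_n=-\tfrac12\partial_{pp}I_n$. The same Euler identity applied on the block yields $(m_r+1)K_{rr}=(m_r-2)I+2\mu/r$. Hence $2\mu/r\ge 3I>0$, so $r>0$.

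With these two ingredients your route does close. It is slightly more economical than the paper's, because the homogeneity reduction makes the level structure of the remaining coordinates irrelevant; the paper has to track repeated positive levels through \eqref{eq:repeated-identity-rho}.
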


\begin{remark}
It is obvious that \(I_n(a)\in \mathbb{R}\). Indeed, \(\overline{I_n(a)}=I_n(a)\). Moreover, fix a support set  $S=\{j:a_j\neq 0\}$   with \(m:=|S|\ge 2\). On the corresponding support stratum, the integral is absolutely convergent and we can differentiate under the integral sign. Indeed, 
\[
\left|\prod_{j=1}^n\frac{1}{1+ita_j}\right|= \prod_{j=1}^n\frac{1}{\sqrt{1+t^2a_j^2}}.
\]
Since $m\geq 2$, the integrand decays like $t^{-m}$ as $t \to \infty$ hence $I_n(a)<\infty$ for all such $a$.  
A similar argument justifies differentiation under the integral sign.
\end{remark}

We now deal separately with the case where there exists exactly one negative coordinate.

\begin{proposition}\label{prop:one-negative-rest-positive}
Let \(n\geq 2\), and let $a_1,\ldots,a_{n-1}\geq 0$, $a_n\leq 0,$ with $\sum_{j=1}^{n}a_j=0,$ $\sum_{j=1}^{n}a_j^2=1.$
Then \(I_n\) is maximized at
\[
a=
\left(
\frac{1}{\sqrt{2}},
0,\ldots,0,
-\frac{1}{\sqrt{2}}
\right).
\]
Moreover, \(I_n\) is minimized at
\[
a=
\left(
\frac{1}{\sqrt{n(n-1)}},
\ldots,
\frac{1}{\sqrt{n(n-1)}},
-\sqrt{\frac{n-1}{n}}
\right).
\]
\end{proposition}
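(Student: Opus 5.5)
The plan is to use the one-negative-pole form of the contour formula \eqref{f(0) contour} to get a closed expression for $I_n$ on this stratum, and then to apply a Schur-convexity argument on the standard simplex. Since $\sum a_j=0$ and $\sum a_j^2=1$, we must have $a_n<0$. Write $s:=-a_n>0$. In the second (negative-pole) expression of \eqref{f(0) contour} only $j=n$ contributes, so
\[
\frac{1}{2\pi}I_n(a)=-\frac{1}{a_n}\prod_{k<n}\frac{a_n}{a_n-a_k}=\frac1s\prod_{k<n}\frac{1}{1+a_k/s}.
\]
Zero coordinates contribute a factor $1$, consistent with reducing the dimension. Coordinates among $a_1,\dots,a_{n-1}$ may coincide; the formula then extends by continuity, since the right-hand side is continuous in $a$ and so is $I_n$ on this set.

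Next, normalize $x_k:=a_k/s\ge 0$ for $k<n$. The constraint $\sum a_j=0$ becomes $\sum_{k<n}x_k=1$. The constraint $\sum a_j^2=1$ becomes $s^2(1+q)=1$, where $q:=\sum_{k<n}x_k^2$. Hence the problem reduces to studying
\[
\frac{1}{2\pi}I_n=\frac{\sqrt{1+q}}{\prod_{k<n}(1+x_k)},\qquad g(x):=\tfrac12\log\Bigl(1+\sum_{k<n} x_k^2\Bigr)-\sum_{k<n}\log(1+x_k),
\]
over the simplex $\{x\in\mathbb R^{n-1}_+:\sum x_k=1\}$. Conversely, every point of this simplex gives an admissible $a$ (with $s=(1+q)^{-1/2}$), so the correspondence is bijective.

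The key step is to show that $g$ is Schur-convex, using Theorem~\ref{thm:Schur-Ostrowski}. We have $\partial_i g=\frac{x_i}{1+q}-\frac{1}{1+x_i}$, and therefore
\[
(x_i-x_j)(\partial_i g-\partial_j g)=(x_i-x_j)^2\Bigl[\frac{1}{1+q}+\frac{1}{(1+x_i)(1+x_j)}\Bigr]\ge 0.
\]
One small technical point: the criterion is stated on $\mathbb R^n$, but $g$ is smooth and symmetric on a neighbourhood of $\mathbb R^{n-1}_+$. The standard proof (comparing along segments $T$-transforms) works on any symmetric convex domain, so this causes no difficulty.

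Finally, combine with the majorization \eqref{maj sum=1}:
\[
\Bigl(\tfrac1{n-1},\dots,\tfrac1{n-1}\Bigr)\prec x\prec(1,0,\dots,0).
\]
Schur-convexity then gives that $g$, and hence $I_n$, is maximized at $x=e_1$ and minimized at the uniform vector.

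At $x=e_1$ we have $q=1$, so $s=1/\sqrt2$ and $a=(\tfrac1{\sqrt2},0,\dots,0,-\tfrac1{\sqrt2})$. The value is $I_n=2\pi\cdot\frac{1}{\sqrt2}$, which is consistent with Webb's constant in \eqref{eq:simplex-maximal-central}.

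At the uniform vector, $q=\frac1{n-1}$, so $s=\sqrt{(n-1)/n}$ and $a_k=s/(n-1)=1/\sqrt{n(n-1)}$ for $k<n$. This is exactly the claimed minimizer.

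I expect the only delicate step to be justifying the closed form when some $a_k$ coincide or vanish, which is handled by the continuity argument above. The Schur computation itself is clean.
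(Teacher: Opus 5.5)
Your proof is correct. The reduction is the same as the paper's: the one-pole residue formula, the normalization $x_k=a_k/s$ onto the simplex $\sum_{k<n}x_k=1$, and the expression $\sqrt{1+q}/\prod_{k<n}(1+x_k)$. The paper uses this normalized form only for the maximum; for the minimum it works with $s^{n-2}/\prod_{k<n}(s+a_k)$ directly.

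Where you differ is the optimization step. The paper bounds the two factors separately with elementary inequalities:
\begin{itemize}
\item for the maximum, it uses $q\le(\sum x_k)^2=1$ and $\prod(1+x_k)\ge 1+\sum x_k=2$;
\item for the minimum, AM--GM gives $\prod_{k<n}(s+a_k)\le\bigl(ns/(n-1)\bigr)^{n-1}$, and Cauchy--Schwarz gives $s^2\le (n-1)/n$.
\end{itemize}
This works because $\sqrt{1+q}$ and $1/\prod(1+x_k)$ happen to be extremized at the same two points.

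You instead show, with a single Schur--Ostrowski computation, that the logarithm of the whole quantity is Schur-convex. You then read off both extremes from the majorization chain between the uniform vector, $x$, and $e_1$.

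What each route buys:
\begin{itemize}
\item Your route handles both extremes at once and gives more. It shows $I_n$ is monotone along the majorization order on this stratum. Since the bracket $\frac{1}{1+q}+\frac{1}{(1+x_i)(1+x_j)}$ is strictly positive, you also get strict Schur-convexity, hence uniqueness of both extremizers up to permutation.
\item The paper's route is fully elementary. It avoids having to apply the Schur--Ostrowski criterion on a domain other than $\mathbb{R}^n$, a point you noticed and handled correctly.
\end{itemize}

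One small remark: your continuity argument for coinciding or vanishing coordinates is not needed. With a single negative coordinate, the lower half-plane contains exactly one simple pole, so the residue computation is valid as it stands regardless of coincidences among the nonnegative coordinates.
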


\begin{remark}
In the notation of the \(n\)-dimensional simplex \(\Delta^n\subset
\mathbb R^{n+1}\), the corresponding statement is obtained by replacing
\(n\) above by \(n+1\). Thus, the lower-bound conjecture for central
sections of the simplex would follow once one proves that a global
minimizer has exactly one coordinate of one sign.
\end{remark}

\begin{proof}[Proof of Proposition~\ref{prop:one-negative-rest-positive}]
Write $b:=-a_n\geq 0.$ Since $a_1,\ldots,a_{n-1}\geq 0,$ $\sum_{j=1}^{n-1}a_j=b,$ and  $\sum_{j=1}^{n-1}a_j^2+b^2=1,$. From~\eqref{f(0) contour} we get
\[
G(0)=\frac{b^{n-2}}{\prod_{j=1}^{n-1} (b+a_j)}.
\]

For the lower bound, by the AM-GM inequality,
\[
\prod_{j=1}^{n-1}(b+a_j)
\leq
\left(\frac{\sum_{j=1}^{n-1}(b+a_j)}{n-1}\right)^{n-1}
=
\left(\frac{nb}{n-1}\right)^{n-1}.
\]
Hence
\[
G(0)
\geq
\frac{b^{n-2}}{\left(\frac{nb}{n-1}\right)^{n-1}}
=
\frac{(n-1)^{n-1}}{n^{n-1}b}.
\]
Since $b^2\leq \frac{n-1}{n},$ we obtain
\[
G(0)
\geq
\left(\frac{n-1}{n}\right)^{n-\frac32}.
\]
Equality holds when
\[
a_1=\cdots=a_{n-1}
=
\frac{1}{\sqrt{n(n-1)}},
\qquad
b=\sqrt{\frac{n-1}{n}}.
\]

For the maximum, set $x_j:=\frac{a_j}{b}\geq 0,$ for $ j=1,\ldots,n-1.$
Then $\sum_{j=1}^{n-1}x_j=1$ and
\[
b^2\left(1+\sum_{j=1}^{n-1}x_j^2\right)=1.
\]
Therefore
\[
G(0)
=
\frac{1}{b\prod_{j=1}^{n-1}(1+x_j)}
=
\frac{\sqrt{1+\sum_{j=1}^{n-1}x_j^2}}
{\prod_{j=1}^{n-1}(1+x_j)}.
\]
Since $\sum_{j=1}^{n-1}x_j^2\leq 1$ and
\[
\prod_{j=1}^{n-1}(1+x_j)\geq 1+\sum_{j=1}^{n-1}x_j=2,
\]
we get
\[
G(0)\leq \frac{\sqrt2}{2}=\frac1{\sqrt2}.
\]
Equality holds when, after a permutation,
\[
x_1=1,\qquad x_2=\cdots=x_{n-1}=0,
\]
that is, when
\[
a=
\left(
\frac{1}{\sqrt2},
0,\ldots,0,
-\frac{1}{\sqrt2}
\right).
\]
\end{proof}

\begin{proof}[Proof of Theorem~\ref{thm:mu-positive-one-negative}]

\medskip
\noindent
{\bf Step 1: Basic identities.}
Set $I:=I_n(a).$
For \(i\neq j\), define
\[
K_{ij}(a):=
\int_{-\infty}^{\infty}
\frac{t^2}{(1+i t a_i)^2(1+i t a_j)^2}
\prod_{\ell\neq i,j}\frac{1}{1+i t a_\ell}\,dt,
\]
and for each \(i\) define
\[
J_i(a):=
\int_{-\infty}^{\infty}
\frac{t^2}{(1+i t a_i)^3}
\prod_{\ell\neq i}\frac{1}{1+i t a_\ell}\,dt.
\]
Then
\[
\partial_{a_i a_i}^2 I_n(a)=-2J_i(a),
\qquad
\partial_{a_i a_j}^2 I_n(a)=-K_{ij}(a)
\quad (i\neq j).
\tag{1}
\label{eq:hessian-entries}
\]

Since \(I_n\) is homogeneous of degree \(-1\), Euler's identity gives
\[
\sum_{j=1}^n a_j\,\partial_{a_j}I_n(a)=-I.
\]
Using the Lagrange equations and the constraints
\[
\sum_{j=1}^n a_j=0,
\qquad
\sum_{j=1}^n a_j^2=1,
\]
we obtain
\[
2\lambda=-I.
\tag{2}
\label{eq:lambda-minus-I}
\]

Also, \(I>0\). Indeed, if \(I=0\), then the section
\(\Delta_n\cap a^{\perp}\) has zero \((n-1)\)-volume and obviously this is not the case.

For each \(i\), one has the integration-by-parts identity
\[
2a_iJ_i(a)=2\,\partial_{a_i}I_n(a)-\sum_{m\neq i} a_m K_{im}(a).
\tag{3}
\label{eq:Ji-general}
\]
Indeed, applying \(\int_{\mathbb R}\frac{d}{dt}(\cdots)\,dt=0\) to
\[
\frac{t^2}{(1+i t a_i)^2}
\prod_{\ell\neq i}\frac{1}{1+i t a_\ell}
\]
gives exactly \eqref{eq:Ji-general}.

We shall also use the following identity. If \(a_i\neq a_j\), then
\[
K_{ij}(a)=I.
\tag{4}
\label{eq:Kij-equals-I}
\]
Indeed, subtracting the Lagrange equations for the indices \(i\) and \(j\), and using
\eqref{eq:lambda-minus-I}, gives
\[
\partial_{a_i}I_n(a)-\partial_{a_j}I_n(a)
=
-I(a_i-a_j).
\]
On the other hand, by direct differentiation,
\[
\partial_{a_i}I_n(a)-\partial_{a_j}I_n(a)
=
-(a_i-a_j)K_{ij}(a).
\]
Since \(a_i\neq a_j\), this proves \eqref{eq:Kij-equals-I}.

\medskip
\noindent
{\bf Step 2: A repeated level cannot be non-positive.}
Let \(r\) be a value appearing in \(a\) with multiplicity \(m_r\geq 2\). Now assume \(r\neq 0\). Choose two indices \(p\neq q\) such that $a_p=a_q=r.$ Then
\[
h:=e_p-e_q\in T_a\mathcal M,
\]
because
\[
\sum_{j=1}^n h_j=0,
\qquad
\langle a,h\rangle=r-r=0.
\]
Since \(a\) is a local maximum, the constrained second variation satisfies
\[
Q(h):=D^2I_n(a)[h,h]-2\lambda\|h\|^2\leq 0.
\]
Using \eqref{eq:hessian-entries} and \eqref{eq:lambda-minus-I}, one gets
\[
Q(h)=2\bigl(I-K_{pq}(a)\bigr)\leq 0.
\]
Since \(a_p=a_q=r\), write
\[
K_{rr}:=K_{pq}(a).
\]
Thus
\[
K_{rr}\geq I.
\tag{5}
\label{eq:Krr-ge-I}
\]

Now fix one index \(i\) in the \(r\)-block. Because the level \(r\) is repeated, one has
\[
J_i(a)=K_{rr}.
\]
Apply \eqref{eq:Ji-general} to this index \(i\):
\[
2rK_{rr}
=
2(2\lambda r+\mu)
-
\sum_{m\neq i}a_mK_{im}(a).
\]
For the indices \(m\neq i\) inside the \(r\)-block, the corresponding contribution is
\[
(m_r-1)rK_{rr}.
\]
For indices outside the \(r\)-block, we have \(a_m\neq r\), hence \(K_{im}(a)=I\) by
\eqref{eq:Kij-equals-I}. Since the sum of the coordinates outside the \(r\)-block equals
\[
-m_r r,
\]
we obtain
\[
2rK_{rr}
=
2(-Ir+\mu)
-
\Big((m_r-1)rK_{rr}-m_r rI\Big).
\]
Rearranging gives
\[
(m_r+1)K_{rr}
=
(m_r-2)I+\frac{2\mu}{r}.
\tag{6}
\label{eq:repeated-identity}
\]
Combining \eqref{eq:Krr-ge-I} and \eqref{eq:repeated-identity}, we get
\[
(m_r+1)I
\leq
(m_r-2)I+\frac{2\mu}{r}.
\]
Therefore
\[
\frac{2\mu}{r}\geq 3I>0.
\tag{7}
\label{eq:mu-over-r-positive}
\]
Since \(\mu>0\), this implies \(r>0\).

Thus every repeated level is positive. In particular, no negative level is repeated.

\medskip
\noindent
{\bf Step 3: There cannot be two negative coordinates.}
Assume, for a contradiction, that \(a\) has at least two negative coordinates.
Choose two of them and denote them by
\[
x<0,
\qquad
y<0.
\]
By Step 2, negative levels cannot be repeated; hence $x\neq y.$

Define
\[
S:=\sum_{\ell\notin\{x,y\}} a_\ell=-(x+y),
\qquad
T:=\sum_{\ell\notin\{x,y\}} a_\ell^2=1-x^2-y^2.
\tag{8}
\label{eq:S-T}
\]
Define a vector \(u\in\mathbb R^n\) by
\[
u_\ell=a_\ell
\qquad
\text{for every } \ell\notin\{x,y\},
\]
and
\[
u_x:=\frac{Sy-T}{x-y},
\qquad
u_y:=\frac{T-Sx}{x-y}.
\tag{9}
\label{eq:u-def}
\]
Then
\[
u_x+u_y=-S,
\qquad
xu_x+yu_y=-T.
\]
Consequently,
\[
\sum_{j=1}^n u_j=0,
\qquad
\sum_{j=1}^n a_j u_j=0.
\]
Thus $u\in T_a\mathcal M.$
 
Also \(u\neq 0\), since otherwise \(a_\ell=0\) for every \(\ell\notin\{x,y\}\), which would contradict $ x+y=0$ with \(x<0\), \(y<0\).

We now compute
\[
Q(u):=D^2I_n(a)[u,u]-2\lambda\|u\|^2.
\]

Group the coordinates different from \(x,y\) according to their levels. If \(\rho\neq0\) is a singleton level, then by \eqref{eq:Ji-general} and \eqref{eq:Kij-equals-I},
\[
J_\rho=\frac{\mu}{\rho}-\frac{I}{2}.
\tag{10}
\label{eq:J-singleton}
\]
Indeed, for an index \(i\) with \(a_i=\rho\), all other levels are distinct from \(\rho\), so
\[
2\rho J_\rho
=
2(-I\rho+\mu)-\sum_{m\neq i}a_m I
=
2(-I\rho+\mu)+\rho I
=
-I\rho+2\mu.
\]

If \(\rho>0\) is a repeated level with multiplicity \(m_\rho\geq2\), then, by
\eqref{eq:repeated-identity},
\[
(m_\rho+1)K_{\rho\rho}
=
(m_\rho-2)I+\frac{2\mu}{\rho}.
\tag{11}
\label{eq:repeated-identity-rho}
\]

Using \eqref{eq:hessian-entries}, \eqref{eq:lambda-minus-I},
\eqref{eq:Kij-equals-I}, \eqref{eq:J-singleton}, and
\eqref{eq:repeated-identity-rho}, we get the following contribution from all
coordinates different from \(x,y\):
\[
3I\sum_{\ell\notin\{x,y\}}a_\ell^2
-
2\mu\sum_{\ell\notin\{x,y\}}a_\ell.
\tag{12}
\label{eq:outside-contribution}
\]
Here zero coordinates contribute nothing.

For the two negative singleton levels \(x\) and \(y\), again by
\eqref{eq:J-singleton},
\[
J_x=\frac{\mu}{x}-\frac{I}{2},
\qquad
J_y=\frac{\mu}{y}-\frac{I}{2}.
\]
Hence their contributions are
\[
3Iu_x^2-\frac{2\mu}{x}u_x^2
\qquad\text{and}\qquad
3Iu_y^2-\frac{2\mu}{y}u_y^2.
\tag{13}
\label{eq:xy-contributions}
\]

Combining \eqref{eq:outside-contribution} and \eqref{eq:xy-contributions}, and using
the definition of \(S\) and \(T\), we obtain
\[
Q(u)
=
3I\left(T+u_x^2+u_y^2\right)
-
2\mu\left(S+\frac{u_x^2}{x}+\frac{u_y^2}{y}\right).
\]
Equivalently,
\[
Q(u)
=
3I\|u\|^2
-
2\mu
\sum_{a_j\neq0}\frac{u_j^2}{a_j}.
\tag{14}
\label{eq:Q-final}
\]

It remains to compute the last sum. Using \eqref{eq:S-T} and \eqref{eq:u-def},
\[
\sum_{a_j\neq0}\frac{u_j^2}{a_j}
=
S+\frac{(Sy-T)^2}{x(x-y)^2}
+\frac{(T-Sx)^2}{y(x-y)^2}.
\]
A direct simplification yields
\[
\sum_{a_j\neq0}\frac{u_j^2}{a_j}
=
\frac{x+y}{xy(x-y)^2}.
\tag{15}
\label{eq:u-square-over-a}
\]
Since $x,y<0,$  we have
\[
x+y<0,
\qquad
xy>0,
\qquad
(x-y)^2>0.
\]
Therefore
\[
\sum_{a_j\neq0}\frac{u_j^2}{a_j}<0.
\tag{16}
\label{eq:u-square-over-a-negative}
\]

Finally, by \eqref{eq:Q-final}, \eqref{eq:u-square-over-a-negative}, and the assumption
\(\mu>0\), we obtain $Q(u)>0.$
 
This contradicts the second-order necessary condition for a local maximum, namely $Q(u)\leq0.$ Therefore \(a\) cannot have two negative coordinates.

\medskip
\noindent
{\bf Step 4: Conclusion.}
We have shown that \(a\) has at most one negative coordinate. On the other hand,
since
\[
\sum_{j=1}^n a_j=0
\qquad\text{and}\qquad
\sum_{j=1}^n a_j^2=1,
\]
the vector \(a\) is nonzero and cannot have all coordinates nonnegative.
Hence \(a\) has at least one negative coordinate.

Therefore \(a\) has exactly one negative coordinate.
\end{proof}

\begin{corollary}\label{cor:mu-negative-or-zero}
Under the assumptions of Theorem~\ref{thm:mu-positive-one-negative}, the following hold:
\begin{enumerate}
    \item If \(\mu<0\), then \(a\) has exactly one positive coordinate.
    \item If \(\mu=0\), then \(a\) has exactly one positive coordinate and exactly one negative coordinate.
\end{enumerate}
\end{corollary}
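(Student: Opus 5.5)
The plan is to derive both parts from Theorem~\ref{thm:mu-positive-one-negative} and its proof. For part (1) I will use a reflection symmetry. For part (2) I will rerun Steps 2 and 3 with \(\mu=0\), observing that the sign of \(\mu\) only enters at the very end of each step.

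\textbf{Part (1): \(\mu<0\).} Set \(b:=-a\); then \(b\in\mathcal M\), since both constraints are invariant under \(a\mapsto -a\).

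First I check that \(b\) is again a smooth local maximum. Replacing \(t\) by \(-t\) in the integral gives \(I_n(-a)=\overline{I_n(a)}\). Since \(I_n\) is real-valued (as noted in the remark after the theorem), \(I_n(-a)=I_n(a)\). So \(I_n\) is even on \(\mathcal M\) and \(b\) is a smooth local maximum on the same support stratum.

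Next I compute the multipliers at \(b\). Differentiating \(I_n(x)=I_n(-x)\) gives \(\nabla I_n(b)=-\nabla I_n(a)\), so
\[
\partial_{b_j}I_n(b)=-(2\lambda a_j+\mu)=2\lambda b_j+(-\mu).
\]
Hence \(b\) has multipliers \(\lambda\) and \(-\mu>0\). Theorem~\ref{thm:mu-positive-one-negative} then says \(b\) has exactly one negative coordinate, so \(a\) has exactly one positive coordinate.

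\textbf{Part (2): \(\mu=0\).} I revisit the proof of Theorem~\ref{thm:mu-positive-one-negative}. Identities \eqref{eq:hessian-entries}--\eqref{eq:Kij-equals-I}, \eqref{eq:repeated-identity}, \eqref{eq:J-singleton} and \eqref{eq:Q-final} are derived without any assumption on the sign of \(\mu\), so they remain valid.

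In Step 2, a repeated nonzero level \(r\) would force \(2\mu/r\geq 3I>0\) by \eqref{eq:mu-over-r-positive}. With \(\mu=0\) this is impossible, so no nonzero level is repeated. In particular, any two negative coordinates are distinct, which is all that Step 3 needs from Step 2.

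Now suppose \(a\) has two negative coordinates \(x\neq y\), and build \(u\in T_a\mathcal M\setminus\{0\}\) exactly as in \eqref{eq:u-def}. Then \eqref{eq:Q-final} with \(\mu=0\) reads \(Q(u)=3I\|u\|^2>0\), since \(I>0\). This contradicts the second-order condition \(Q(u)\le 0\), so \(a\) has at most one negative coordinate.

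For positive coordinates, apply the same argument to \(b=-a\). By the computation in part (1), \(b\) is a local maximum with multiplier \(-\mu=0\). So \(b\) has at most one negative coordinate, i.e.\ \(a\) has at most one positive coordinate.

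Finally, since \(\sum a_j=0\) and \(a\neq 0\), the vector \(a\) has at least one coordinate of each sign. Hence it has exactly one positive and exactly one negative coordinate.

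\textbf{Main obstacle.} The delicate point is confirming that \eqref{eq:Q-final} and the construction of \(u\) use no information about \(\mu\) beyond the distinctness \(x\neq y\). When \(\mu\neq 0\), that distinctness came from the sign argument in Step 2; here it comes from the fact that no nonzero level can repeat. One must also check that the "smooth local maximum" hypothesis transfers to \(b=-a\). This holds because \(a\mapsto -a\) preserves the support stratum and \(\mathcal M\).
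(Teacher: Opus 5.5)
Your proposal is correct and matches the paper's proof. Part (1) applies the theorem to $-a$ with multiplier $-\mu$. Part (2) uses \eqref{eq:mu-over-r-positive} with $\mu=0$ to exclude repeated nonzero levels, so the Step~3 vector $u$ gives $Q(u)=3I\|u\|^2>0$, and then passes to $-a$ to handle positive coordinates.

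One small slip: the substitution $t\mapsto -t$ gives $I_n(-a)=I_n(a)$ directly, and it is complex conjugation that gives $\overline{I_n(a)}$. Either way you get the identity you need.
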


\begin{proof}
If \(\mu<0\), apply Theorem~\ref{thm:mu-positive-one-negative} to \(-a\). The corresponding multiplier is \(-\mu>0\). Hence \(-a\) has exactly one negative coordinate, and therefore \(a\) has exactly one positive coordinate.

Assume now that \(\mu=0\). We first show that \(a\) has at most one negative coordinate. If there were two negative coordinates \(x<0\) and \(y<0\), then the argument of Step 3 in the proof of Theorem~\ref{thm:mu-positive-one-negative} applies verbatim. Indeed, repeated negative levels are impossible: if \(r<0\) is repeated, then \eqref{eq:mu-over-r-positive} gives
\[
0=\frac{2\mu}{r}\geq 3I>0,
\]
a contradiction. Thus we may choose \(x\neq y\). The same tangent vector \(u\) gives
\[
Q(u)=3I\|u\|^2>0,
\]
contradicting the second-order necessary condition for a local maximum.

Hence \(a\) has at most one negative coordinate. Applying the same argument to \(-a\), we also get that \(a\) has at most one positive coordinate. Since
\[
\sum_{j=1}^n a_j=0
\qquad\text{and}\qquad
\sum_{j=1}^n a_j^2=1,
\]
both signs must occur. Therefore \(a\) has exactly one positive coordinate and exactly one negative coordinate.
\end{proof}
\begin{corollary}
Combining Theorem~\ref{thm:mu-positive-one-negative},
Corollary~\ref{cor:mu-negative-or-zero}, and
Proposition~\ref{prop:one-negative-rest-positive}, together with the
symmetric one-positive version of the proposition, we obtain Webb's bound.
\end{corollary}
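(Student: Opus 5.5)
We argue by compactness and then reduce to the two structural results. By \eqref{Webb slicing f(0)} and Fourier inversion, it suffices to show that $I_{n+1}(a)\le 2\pi/\sqrt2$ on $\mathcal M$, with $n+1$ in place of $n$. Here $I_{n+1}(a)=2\pi f_{\sum a_j\mathcal E_j}(0)$, and $2\pi/\sqrt2$ is the value of $2\pi G(0)$ at $(1/\sqrt2,-1/\sqrt2,0,\ldots,0)$ computed in Proposition~\ref{prop:one-negative-rest-positive}.

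The function $a\mapsto \operatorname{vol}_{n-1}(\Delta_n\cap a^\perp)$ is continuous on the compact set $\mathcal M$, since the section volume of a convex body through an interior point varies continuously with the normal. It therefore attains a global maximum at some $a^\ast$. Let $S=\{j:a^\ast_j\neq0\}$ and $m=|S|$; then $m\ge2$. The zero coordinates do not enter the integrand $\prod_j(1+ita_j)^{-1}$. Hence $a^\ast$ restricted to $S$ is a global, and in particular local, maximum of $I_m$ on the corresponding $\mathcal M$ in $\mathbb R^m$. In the interior of this stratum $I_m$ is smooth, by the remark following Theorem~\ref{thm:mu-positive-one-negative}. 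The two constraint gradients $(1,\ldots,1)$ and $2a$ are linearly independent, because $a$ is not constant (it has zero sum and norm one). So the Lagrange multipliers $\lambda,\mu$ exist, and the second-order necessary condition holds.

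We then split according to the sign of $\mu$. If $\mu>0$, Theorem~\ref{thm:mu-positive-one-negative} shows that $a^\ast|_S$ has exactly one negative coordinate. If $\mu<0$ or $\mu=0$, Corollary~\ref{cor:mu-negative-or-zero} shows that it has exactly one positive coordinate, or exactly one of each sign. When there is one negative coordinate, Proposition~\ref{prop:one-negative-rest-positive} applied in dimension $m$ gives $f(0)\le 1/\sqrt2$. When there is one positive coordinate, we replace $a$ by $-a$. The formula \eqref{f(0) contour}, written as minus the sum over negative coordinates, is symmetric under this sign change, and the integral $I_m$ is invariant under $a\mapsto -a$ via the substitution $t\mapsto -t$. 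So the one-positive case reduces to the one-negative case with the same bound. Inserting the bound into \eqref{Webb slicing f(0)} gives \eqref{eq:simplex-maximal-central}.

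For the equality case, we track equality in Proposition~\ref{prop:one-negative-rest-positive}. It forces $x_1=1$ and all other $x_j=0$, so the maximizer is a permutation of $(1/\sqrt2,-1/\sqrt2,0,\ldots,0)$. Such a normal vector vanishes on $n-1$ coordinates, so the hyperplane contains the corresponding $n-1$ vertices $e_j$.

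The main obstacle is the reduction to strata. Theorem~\ref{thm:mu-positive-one-negative} is stated for smooth local maxima, but $I_n$ fails to be smooth where coordinates vanish. We handle this by restricting to the support, as explained above. We must also make sure that the hypotheses of \eqref{f(0) contour} still hold in the reduced problem, in particular that the coordinates are distinct. In the one-negative configuration the positive coordinates may repeat, but $G(0)=b^{m-2}/\prod_j(b+a_j)$ extends continuously across such coincidences, so the bound passes to the limit.
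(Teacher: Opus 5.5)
Your proposal is correct and is the same combination the paper has in mind. You take a global maximizer and pass to its support, so that Theorem~\ref{thm:mu-positive-one-negative} and Corollary~\ref{cor:mu-negative-or-zero} apply according to the sign of $\mu$. You then invoke Proposition~\ref{prop:one-negative-rest-positive}, applied to $-a$ in the one-positive case via $I(-a)=I(a)$. The paper leaves several points implicit that you handle correctly: existence of a maximizer by compactness, the constraint qualification, the reduction to the support stratum, and the validity of $G(0)=b^{m-2}/\prod_j(b+a_j)$ when positive coordinates coincide.
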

\section{Sections of $\Delta^n$ with small distance from the centroid}\label{non-central sections with small ditance}

Let $a\in \mathbb{S}^n$ be fixed, and set
$\kappa:=\sum_{j=1}^{n+1} a_j.$  In view of \eqref{dist from bary}, and the previous discussions, prescribing the distance from the barycenter is equivalent to prescribing the value of $\kappa$.  More precisely, we prove the upper bound for $|\kappa|< 1/\sqrt
2$, and from~\eqref{dist from bary} this corresponds to $d< \sqrt{\frac{1}{(n+1)(2n+1)}}.$

\begin{theorem}\label{aff sec thm}
    Let \(|\kappa|<1/\sqrt2\). Then, for every
\(a\in\mathbb R^{n+1}\) satisfying  $\sum_{j=1}^{n+1}a_j=\kappa,$ $\sum_{j=1}^{n+1}a_j^2=1,$ one has
\[
        \operatorname{vol}_{n-1}(\Delta_n\cap H_a)
        \le
        \frac{\sqrt{n+1-\kappa^2}}{(n-1)!}
        \frac1{\sqrt{2-\kappa^2}}.
\]
Equality holds, up to permutation of the coordinates, at
\[
        a=
        \left(
        \frac{\kappa}{2}+\sqrt{\frac12-\frac{\kappa^2}{4}},
        \frac{\kappa}{2}-\sqrt{\frac12-\frac{\kappa^2}{4}},
        0,\ldots,0
        \right).
\]
\end{theorem}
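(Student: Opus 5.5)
We mimic the Lagrange-multiplier argument of Theorem~\ref{thm:mu-positive-one-negative}, now on the manifold
\[
\mathcal M_\kappa:=\Big\{a\in\mathbb R^{n+1}:\ \sum_j a_j=\kappa,\ \sum_j a_j^2=1\Big\}.
\]
By Dirksen's formula the prefactor \(\sqrt{n+1-\kappa^2}/(n-1)!\) is constant on \(\mathcal M_\kappa\), so we only need to maximize
\[
I(a)=\frac1{2\pi}\int_{\mathbb R}\prod_j (1+ita_j)^{-1}\,dt .
\]
Replacing \(a\) by \(-a\) (same hyperplane section up to reflection \(t\mapsto -t\)) we may assume \(\kappa\ge 0\). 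A maximizer exists: lower strata (zero coordinates) are handled by induction on the support size, exactly as in the central case, and \(I\) is continuous on each stratum.

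At a smooth local maximum write \(\partial_j I=2\lambda a_j+\mu\). Euler's identity for the \((-1)\)-homogeneous \(I\) now gives \(2\lambda+\mu\kappa=-I\) instead of \(2\lambda=-I\). The two facts that drive Steps 2--3 survive with this modification.
\begin{enumerate}
\item The identity \(\partial_iI-\partial_jI=-(a_i-a_j)K_{ij}\) gives \(K_{ij}=-2\lambda=I+\mu\kappa\) for \(a_i\ne a_j\).
\item The integration-by-parts identity \eqref{eq:Ji-general} is unchanged.
\end{enumerate}
Redoing Step 2 for a repeated level \(r\) yields a relation of the form \((m_r+1)K_{rr}=(m_r-2)\tilde I+2\tilde\mu/r\), with \(\tilde I:=-2\lambda\) and a shifted \(\tilde\mu\). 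The tangent vectors \(e_p-e_q\) and the vector \(u\) of \eqref{eq:u-def} still lie in \(T_a\mathcal M_\kappa\), since only the sums \(\sum u_j=0\) and \(\langle a,u\rangle=0\) are required. With \(S,T\) defined relative to the remaining coordinates, the computation \eqref{eq:Q-final}--\eqref{eq:u-square-over-a} goes through with \(I\) replaced by \(\tilde I\) and \(\mu\) by \(\tilde\mu\). The conclusion is the same: if \(\tilde\mu>0\), the maximizer has exactly one negative coordinate; if \(\tilde\mu<0\), exactly one positive one; if \(\tilde\mu=0\), one of each.

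This bookkeeping is, I expect, the main obstacle. Several points from the central case must be rechecked now that \(\sum a_j\ne 0\):
\begin{enumerate}
\item The claim that the coordinates outside a block sum to \(-m_rr\) becomes \(\kappa-m_rr\), which introduces extra \(\kappa\)-terms.
\item The positivity \(\tilde I>0\) is no longer automatic.
\item In the two-sign-class case, the cancellation in \eqref{eq:u-square-over-a} uses \(S=-(x+y)\), which now becomes \(\kappa-(x+y)\).
\end{enumerate}
I would first try to absorb all \(\kappa\)-dependence into the single shifted multiplier \(\tilde\mu=\mu+c\,\kappa\tilde I\) for a suitable constant \(c\), and check that \(Q(u)\) retains the form \(3\tilde I\|u\|^2-2\tilde\mu\sum u_j^2/a_j\) plus a term of controlled sign. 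The hypothesis \(|\kappa|<1/\sqrt2\) should enter here, for instance to guarantee \(\tilde I>0\) or to keep \(S\) of the right sign.

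It remains to solve the one-sign-class case explicitly, as in Proposition~\ref{prop:one-negative-rest-positive}. Suppose there is one negative coordinate \(-b\) and nonnegative \(a_1,\dots,a_n\). The residue formula \eqref{affine sect contoured} at the single negative pole gives
\[
I=\frac{b^{n-1}}{\prod_{j\le n}(b+a_j)}.
\]
Set \(x_j=a_j/b\), so that \(\sum x_j=1+\kappa/b\). Then \(\prod(1+x_j)\ge 1+\sum x_j=2+\kappa/b\), with equality iff at most one \(x_j\) is nonzero, and hence
\[
I\le\frac{1}{2b+\kappa}.
\]
With a single positive coordinate \(c=\kappa+b\) and \(b^2+c^2=1\), we get \(2b+\kappa=\sqrt{2-\kappa^2}\), which is the claimed value. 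It remains to check that, among admissible \(b\), the bound \(1/(2b+\kappa)\) is largest in this two-point configuration. This should follow because \(b\) is minimized there: for fixed \(\sum a_j\), the quantity \(\sum a_j^2\) is maximized by concentrating the mass in one coordinate.

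The one-positive case with several negative coordinates is handled symmetrically, via the positive-pole residue. Here one must verify, using \(\kappa\ge0\) and \(|\kappa|<1/\sqrt2\), that it does not beat \(1/\sqrt{2-\kappa^2}\). This is the second place where the restriction on \(\kappa\) is expected to be used.
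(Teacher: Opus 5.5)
There is a genuine gap. Your setup is sound as far as it goes. With $\tilde I=-2\lambda=I+\kappa\mu$ one has $2\tilde\mu=\delta=(2-\kappa^2)\mu-\kappa I=2\mu-\kappa\tilde I$, so your shift is $\tilde\mu=\mu-\tfrac{\kappa}{2}\tilde I$.

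The branch $\kappa\ge0$, $\tilde\mu>0$ closes:
\begin{itemize}
\item $(2-\kappa^2)\tilde I=2I+\kappa\delta$ gives $\tilde I>0$.
\item For two negative coordinates $-p,-q$, Cauchy--Schwarz gives $\sum_j u_j^2/a_j\le S-S^2/(p+q)$ with $S=p+q+\kappa\ge p+q$, which is negative after an equality-case check.
\end{itemize}
Your one-negative computation is the paper's Proposition~\ref{prop:one-negative-rest-positive-affine}. It holds verbatim for $\kappa<0$ too, since $2b+\kappa=b+\sum_{j\le n}a_j>0$. So the ``one-positive'' case at the end needs no use of $|\kappa|<1/\sqrt2$.

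\textbf{The gap is the branch $\tilde\mu<0$.} You assert that $a$ then has exactly one positive coordinate, by analogy with Corollary~\ref{cor:mu-negative-or-zero}. That corollary works by passing to $-a$. Here $a\mapsto -a$ maps $\mathcal M_\kappa$ onto $\mathcal M_{-\kappa}$, reverses the sign of $\tilde\mu$, and leaves $\tilde I$ unchanged. So the branch $\kappa>0$, $\tilde\mu<0$ is the same problem as $\kappa<0$, $\tilde\mu>0$, which your reduction to $\kappa\ge0$ never covers.

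In that branch both of your ingredients fail:
\begin{itemize}
\item $(2-\kappa^2)\tilde I=2I+\kappa\delta$ no longer forces $\tilde I>0$.
\item Working with $-a$, where $S=p+q-|\kappa|$, Cauchy--Schwarz only yields $\sum_j u_j^2/a_j\le S|\kappa|/(p+q)$. That bound is positive, so it does not even fix the sign.
\end{itemize}
Thus $Q(u)=3\tilde I\|u\|^2-2\tilde\mu\sum_j u_j^2/a_j$ has two terms of possibly opposite signs, and no qualitative sign argument finishes.

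This is exactly the paper's mixed case, Step 4 in the proof of Theorem~\ref{thm:kappa-small-delta-positive}. It is where $|\kappa|<1/\sqrt2$ is genuinely used, and not to make $\tilde I$ positive. Take $k=-\kappa\in(0,1/\sqrt2)$ and $\delta>0$:
\begin{itemize}
\item One gets $\tilde I>-\frac{k}{2-k^2}\,\delta$. Since $\frac{3k}{2-k^2}<\sqrt2$ precisely when $k<1/\sqrt2$, this gives $3\tilde I+\sqrt2\,\delta>0$.
\item The paper proves $-\sum_j h_j^2/a_j\ge\sqrt2\,\|h\|^2$ for the three-negative test vector. This uses Cauchy--Schwarz, $p^2+q^2+r^2\le 1$, and $x^4+y^4+z^4=\tfrac12(x^2+y^2+z^2)^2$ when $x+y+z=0$.
\item It proves the same bound for the two-negative vector $u$, via a polynomial inequality in $k$, $s=p+q$, $z=(p-q)^2$ over the admissible region.
\item It excludes repeated negative levels: $|r|\le 1/\sqrt2$ gives $\delta/r\le-\sqrt2\,\delta<3\tilde I$, contradicting $\delta/r\ge3\tilde I$.
\end{itemize}
Without this quantitative step or a substitute, your argument proves the bound only for maximizers with $\tilde\mu\ge0$. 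It does not exclude a maximizer with several positive coordinates when $\kappa>0$.
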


Again, for convenience, we work with $n$ coordinates instead of $n+1$.

\begin{theorem}\label{thm:kappa-small-delta-positive}
Let
\[
I_n(a):=\int_{-\infty}^{\infty}\prod_{j=1}^n \frac{1}{1+i t a_j}\,dt,
\qquad
\mathcal{M}_{\kappa}:=\Bigl\{a\in\mathbb R^n:\ \sum_{j=1}^n a_j=\kappa,\ \sum_{j=1}^n a_j^2=1\Bigr\},
\]
and assume
\[
|\kappa|<\frac1{\sqrt2}.
\]
Let \(a\in \mathcal{M}_{\kappa}\) be a smooth local maximum of \(I_n\), and let
\(\lambda,\mu\in\mathbb R\) be the Lagrange multipliers:
\[
\partial_{a_j}I_n(a)=2\lambda a_j+\mu,
\qquad j=1,\dots,n.
\]
Set
\[
I:=I_n(a),
\qquad
c:=I+\kappa\mu,
\qquad
\delta:=(2-\kappa^2)\mu-\kappa I.
\]
Assume that
\(
\delta>0.
\)
Then \(a\) has exactly one negative coordinate.
\end{theorem}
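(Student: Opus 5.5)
The plan is to rerun the Lagrange-multiplier argument of Theorem~\ref{thm:mu-positive-one-negative} on $\mathcal M_\kappa$, tracking how the constraint $\sum_j a_j=\kappa$ modifies each identity. The combinations $c$ and $\delta$ of the statement should play the roles that $I$ and $2\mu$ played before.

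\textbf{Step 1: basic identities.} The Hessian entries \eqref{eq:hessian-entries} and the integration-by-parts identity \eqref{eq:Ji-general} do not involve the constraints, so they carry over unchanged. Euler's identity $\sum_j a_j\partial_{a_j}I_n=-I$, combined with the Lagrange equations and $\sum a_j^2=1$, $\sum a_j=\kappa$, gives $2\lambda+\kappa\mu=-I$, that is,
\[
2\lambda=-c .
\]
Subtracting two Lagrange equations then gives $K_{ij}(a)=c$ whenever $a_i\neq a_j$, which replaces \eqref{eq:Kij-equals-I}. Inserting these into \eqref{eq:Ji-general}, and noting that the coordinates other than $a_i$ now sum to $\kappa-a_i$, we obtain for a singleton level $\rho\neq0$
\[
2\rho J_\rho=2(-c\rho+\mu)-(\kappa-\rho)c=-c\rho+\delta,
\qquad\text{since } 2\mu-\kappa c=\delta .
\]
For a repeated level $r$ of multiplicity $m_r$, the same computation gives
\[
(m_r+1)K_{rr}=(m_r-2)c+\frac{\delta}{r}.
\]
So $\delta$ replaces $2\mu$ and $c$ replaces $I$ throughout.

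\textbf{Step 2: no repeated non-positive level.} Take $h=e_p-e_q$ inside an $r$-block; it is tangent to $\mathcal M_\kappa$. The second variation is $Q(h)=2(c-K_{rr})\le0$. Combined with the block identity this gives $\delta/r\ge 3c$. Provided $c>0$, the assumption $\delta>0$ forces $r>0$, so no negative level is repeated.

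\textbf{Step 3: no two negative coordinates.} Suppose $x\ne y$ are negative coordinates. Define $u$ by \eqref{eq:u-def}, now with $S=\kappa-x-y$ and $T=1-x^2-y^2$; then $\sum u_j=0$ and $\langle a,u\rangle=0$, so $u$ is tangent. Summing the block contributions exactly as in \eqref{eq:outside-contribution}--\eqref{eq:Q-final} should give
\[
Q(u)=3c\|u\|^2-\delta\sum_{a_j\ne0}\frac{u_j^2}{a_j}.
\]
The explicit simplification \eqref{eq:u-square-over-a} used only $u_\ell=a_\ell$ off $\{x,y\}$ and the two linear relations defining $u_x,u_y$. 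I will redo it with the new $S$ and check that $\sum_{a_j\ne0}u_j^2/a_j$ is still negative when $x,y<0$. I expect the $S$-dependence to cancel as before, leaving $\frac{x+y}{xy(x-y)^2}$. Then $c>0$ and $\delta>0$ give $Q(u)>0$, contradicting maximality. Step 4 is unchanged, because $\sum a_j^2=1$ with $|\kappa|<1$ rules out all coordinates being nonnegative: otherwise $\kappa\ge\|a\|_2=1$.

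\textbf{Main obstacle: proving $c>0$.} In the central case this was simply $I>0$. Here $c=I+\kappa\mu$. If $\kappa\ge0$, the assumption $\delta>0$ gives $\mu>\kappa I/(2-\kappa^2)\ge0$, hence $c\ge I>0$. For $\kappa<0$, the idea is to exploit $|\kappa|<1/\sqrt2$. First I would combine $\delta>0$ with the identity $\sum_j\partial_{a_j}I_n=2\lambda\kappa+n\mu$, together with some a priori bound relating $|\mu|$ to $I$. If that fails, I would test the second-order condition on a tangent vector supported on one positive and one negative level, to show that $c\le0$ is incompatible with a local maximum. This is precisely where the threshold $1/\sqrt2$ should enter.
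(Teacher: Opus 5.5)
Your argument for $\kappa\ge0$ is essentially right, but for $\kappa<0$ there is a genuine gap, and it is the one you flag yourself: nothing in the proposal establishes $c>0$.

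With $k=-\kappa$, the identity $(2-\kappa^2)c=2I+\kappa\delta$ gives only $c=(2I-k\delta)/(2-k^2)$. This is negative once $\delta>2I/k$, and neither of your suggested routes produces a lower bound on $c$. A tangent vector supported on one positive and one negative level is either $0$ or of the form $e_p-e_q$ inside a repeated block. The latter only yields $\delta/r\ge 3c$, which is an \emph{upper} bound on $c$.

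The paper never proves $c>0$. It uses only $c>-\frac{k}{2-k^2}\delta$ together with $\frac{3k}{2-k^2}<\sqrt2$, which is equivalent to $k<1/\sqrt2$; this is where the threshold enters. Together these give $3c+\sqrt2\,\delta>0$. The paper then strengthens every second-order test by a factor $\sqrt2$:
\begin{itemize}
\item A repeated negative level has $|r|\le 1/\sqrt{m_r}\le1/\sqrt2$, so $\delta/r\le-\sqrt2\,\delta<3c$, contradicting $\delta/r\ge3c$.
\item For three negative coordinates $-p,-q,-r$, the vector $h=(q-r)e_i+(r-p)e_j+(p-q)e_k$ satisfies $\sum_j h_j^2/|a_j|\ge\sqrt2\|h\|^2$. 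This follows from Cauchy--Schwarz, $p^2+q^2+r^2\le1$, and $x^4+y^4+z^4=\frac12(x^2+y^2+z^2)^2$ when $x+y+z=0$.
\item For exactly two negative coordinates, the paper proves $-\sum_{a_j\ne0}u_j^2/a_j\ge\sqrt2\|u\|^2$ by a polynomial positivity argument in $(k,s,z)$. It uses the admissible range $\max\{k,1/\sqrt2\}<s<\sqrt2$ and $0<z<\min\{s^2,2-s^2\}$.
\end{itemize}
Each case then gives $Q\ge(3c+\sqrt2\,\delta)\|\cdot\|^2>0$, a contradiction.

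Separately, your expectation in Step 3 that the $S$-dependence cancels is false. With $S=\kappa-x-y$ and $T=1-x^2-y^2$ one gets
\[
\sum_{a_j\neq0}\frac{u_j^2}{a_j}=\frac{(x+y)-\kappa\bigl(xy(x-y)^2+2(x^2+y^2)\bigr)+\kappa^2(x^3+y^3)}{xy(x-y)^2}.
\]
For $\kappa\ge0$ and $x,y<0$, all three numerator terms are nonpositive, so your $\kappa\ge0$ argument survives. In fact it handles ``at least two negatives'' in one stroke, whereas the paper treats three negatives and exactly two negatives separately. For $\kappa<0$, however, the middle term is positive. So even with $c>0$ in hand, you would need a genuine estimate from the constraints rather than a cancellation.
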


We now deal with the case where only one negative coordinate appears. The following serves as a partial generalization of Proposition~\ref{prop:one-negative-rest-positive}.

\begin{proposition}\label{prop:one-negative-rest-positive-affine}
Let $a_1,\ldots,a_n\geq0,$ $a_{n+1}\leq0,$
with $\sum_{j=1}^{n+1}a_j=\kappa,$ $|\kappa|<\frac1{\sqrt2},$ and $\sum_{j=1}^{n+1}a_j^2=1.$ Then
\[
a\mapsto\operatorname{vol}_{n-1}(\Delta_n\cap a^{\perp})
\]
is maximized, up to permutation of the nonnegative coordinates, at
\[
\left(
\frac{\kappa}{2}+\sqrt{\frac{1}{2}-\frac{\kappa^2}{4}},
0,\ldots,0,
\frac{\kappa}{2}-\sqrt{\frac{1}{2}-\frac{\kappa^2}{4}}
\right).
\]
\end{proposition}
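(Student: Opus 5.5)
We would follow the scheme of the proof of Proposition~\ref{prop:one-negative-rest-positive}, using the contour formula \eqref{affine sect contoured}. Since $\sum_j a_j^2=1$ fixes $\kappa=\sum_j a_j$, the prefactor $\sqrt{n+1-\kappa^2}/(n-1)!$ is constant on the admissible set. So it suffices to maximize the residue sum $G$. Write $b:=-a_{n+1}$. We may first assume $b>0$ and that the nonzero coordinates are pairwise distinct; the degenerate configurations follow by continuity of $I_n$ on the support stratum, as in the remark after Theorem~\ref{thm:mu-positive-one-negative}. Using the second expression in \eqref{affine sect contoured}, only the single negative coordinate contributes:
\[
G=\frac{1}{b}\prod_{k=1}^{n}\frac{b}{b+a_k}=\frac{b^{n-1}}{\prod_{k=1}^n(b+a_k)}.
\]

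\textbf{Step 1 (reduce to a bound in $b$).} Set $x_k:=a_k/b\geq 0$. Then $\sum_{k=1}^n x_k=(\kappa+b)/b=1+\kappa/b$ and
\[
G=\frac{1}{b\prod_{k=1}^n(1+x_k)}.
\]
Expanding the product and dropping the nonnegative higher-order terms gives $\prod(1+x_k)\geq 1+\sum x_k=2+\kappa/b$. Hence
\[
G\leq \frac{1}{2b+\kappa}.
\]
Equality holds iff at most one $x_k$ is nonzero. This requires $2b+\kappa>0$, which Step 2 confirms.

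\textbf{Step 2 (lower bound on $b$).} This is the only genuinely new point compared with the central case, and the step I expect to need the most care. We must minimize $b$ over admissible vectors. The nonnegative coordinates have sum $\kappa+b\geq 0$ and sum of squares $1-b^2$. For nonnegative reals the sum of squares is at most the square of the sum, so
\[
(\kappa+b)^2\geq 1-b^2,
\qquad\text{i.e.}\qquad
2b^2+2\kappa b+\kappa^2-1\geq 0.
\]
Equality holds iff exactly one nonnegative coordinate is nonzero. Because $|\kappa|<1/\sqrt2$, the constant term $\kappa^2-1$ is negative, so the quadratic has roots of opposite sign. Since $b>0$, we get
\[
b\geq \frac{-\kappa+\sqrt{2-\kappa^2}}{2}>0,
\]
and therefore $2b+\kappa\geq\sqrt{2-\kappa^2}>0$.

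\textbf{Step 3 (combine and identify the maximizer).} Combining Steps 1 and 2 gives $G\leq 1/\sqrt{2-\kappa^2}$, which is the bound of Theorem~\ref{aff sec thm}. Both equality conditions demand the same thing: exactly one nonzero nonnegative coordinate, with $b$ equal to the positive root above. The positive coordinate is then
\[
\kappa+b=\frac{\kappa}{2}+\sqrt{\frac12-\frac{\kappa^2}{4}},
\qquad
a_{n+1}=-b=\frac{\kappa}{2}-\sqrt{\frac12-\frac{\kappa^2}{4}}.
\]
This is the claimed maximizer, unique up to permutation of the nonnegative coordinates. This configuration has repeated zero coordinates, so Step 3 uses the continuity extension from the setup: $G$ is continuous there and attains the bound.
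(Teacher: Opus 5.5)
Your proof is correct and follows the paper's argument essentially step for step. You use the single-residue formula $b^{n-1}/\prod(b+a_k)$, the bound $\prod(b+a_k)\ge b^{n-1}(2b+\kappa)$, and then $\sum a_k^2\le(\sum a_k)^2$ to get $2b+\kappa\ge\sqrt{2-\kappa^2}$; you solve the quadratic in $b$ where the paper completes the square, which is the same step. Note that $b>0$ need not be assumed, since it is forced: $b=0$ would make your quadratic equal to $\kappa^2-1<0$.
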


\begin{remark}
The case where there is exactly one positive coordinate can be treated similarly.
\end{remark}

\begin{proof}[Proof of Proposition~\ref{prop:one-negative-rest-positive-affine}]
Write $b:=-a_{n+1}\geq0.$ Then $\sum_{j=1}^{n}a_j=b+\kappa,$ and  $b^2+\sum_{j=1}^{n}a_j^2=1.$ Since \(|\kappa|<1/\sqrt2\), we necessarily have \(b>0\). From the identity~\eqref{affine sect contoured}, we obtain
\[
\operatorname{vol}_{n-1}\left(\Delta_n \cap a^{\perp} \right)
=
\frac{\sqrt{n+1-\kappa^2}}{(n-1)!}\cdot
\frac{b^{n-1}}{\prod_{j=1}^n(b+a_j)}.
\]
Let \(F(a)\) denote the quantity without the constant factor. We begin by observing that
\[
\prod_{j=1}^n(b+a_j)
\geq
b^{n-1}\left(b+\sum_{j=1}^n a_j\right)
=
b^{n-1}(2b+\kappa),
\]
where we used repeatedly the elementary inequality
\[
(b+x)(b+y)=b^2+b(x+y)+xy\geq b(b+x+y),
\qquad b,x,y\geq0.
\]
Thus,
\[
F(a)\leq \frac{1}{2b+\kappa}.
\]
Moreover,
\[
\sum_{j=1}^n a_j^2
\leq
\left(\sum_{j=1}^n a_j\right)^2
=
(b+\kappa)^2.
\]
Hence
\[
1=b^2+\sum_{j=1}^n a_j^2
\leq
b^2+(b+\kappa)^2.
\]
Equivalently,
\[
(2b+\kappa)^2\geq 2-\kappa^2.
\]
Since \(2b+\kappa>0\), we get
\[
F(a)\leq \frac{1}{\sqrt{2-\kappa^2}}.
\]

Equality holds if and only if equality holds in all the above estimates. This forces, after a permutation of the nonnegative coordinates, $a_2=\cdots=a_n=0,$ and $b^2+(b+\kappa)^2=1.$

Solving for \(b\), we get
\[
b=-\frac{\kappa}{2}+\sqrt{\frac12-\frac{\kappa^2}{4}}.
\]
Thus
\[
a_{n+1}=-b
=
\frac{\kappa}{2}-\sqrt{\frac12-\frac{\kappa^2}{4}},
\]
and
\[
a_1=b+\kappa
=
\frac{\kappa}{2}+\sqrt{\frac12-\frac{\kappa^2}{4}}.
\]
This proves the claim.
\end{proof}

The proof of Theorem~\ref{thm:kappa-small-delta-positive} mainly leverages the argument of Theorem~\ref{thm:mu-positive-one-negative}.

\begin{proof}[Proof of Theorem~\ref{thm:kappa-small-delta-positive}]
For brevity write
\[
T_a\mathcal{M}_{\kappa}
=
\Bigl\{h\in\mathbb R^n:\ \sum_{j=1}^n h_j=0,\ \sum_{j=1}^n a_j h_j=0\Bigr\}.
\]
Since \(a\) is a constrained local maximum, the second-order necessary condition is
\[
Q(h):=D^2 I_n(a)[h,h]-2\lambda \|h\|^2\le 0
\qquad\text{for every }h\in T_a\mathcal{M}_{\kappa}.
\]

\medskip
\noindent
{\bf Step 1: Basic identities.}
For \(i\neq j\), define
\[
K_{ij}(a):=
\int_{-\infty}^{\infty}
\frac{t^2}{(1+i t a_i)^2(1+i t a_j)^2}
\prod_{\ell\neq i,j}\frac{1}{1+i t a_\ell}\,dt,
\]
and for each \(i\) define
\[
J_i(a):=
\int_{-\infty}^{\infty}
\frac{t^2}{(1+i t a_i)^3}
\prod_{\ell\neq i}\frac{1}{1+i t a_\ell}\,dt.
\]
Then
\begin{equation}
\partial_{a_i a_i}^2 I_n(a)=-2J_i(a),
\qquad
\partial_{a_i a_j}^2 I_n(a)=-K_{ij}(a)
\quad (i\neq j).
\label{eq:kappa-small-hessian}
\end{equation}

Since \(I_n\) is homogeneous of degree \(-1\), Euler's identity gives
\[
\sum_{j=1}^n a_j\,\partial_{a_j} I_n(a)=-I.
\]
Using the Lagrange equations and the constraints
\[
\sum_{j=1}^n a_j=\kappa,
\qquad
\sum_{j=1}^n a_j^2=1,
\]
we obtain
\begin{equation}
2\lambda=-c,
\label{eq:kappa-small-lambda}
\end{equation}
where \(c=I+\kappa\mu\).

Also \(I>0\) and
for each \(i\) one has the integration-by-parts identity
\begin{equation}
2a_i J_i(a)
=
2\,\partial_{a_i}I_n(a)
-
\sum_{m\neq i} a_m K_{im}(a).
\label{eq:kappa-small-Ji-general}
\end{equation}
Indeed, apply \(\int_{\mathbb R}\frac{d}{dt}(\cdots)\,dt=0\) to
\[
\frac{t^2}{(1+i t a_i)^2}\prod_{\ell\neq i}\frac{1}{1+i t a_\ell}.
\]

\medskip
\noindent
{\bf Step 2: Distinct levels and repeated levels.}
If \(a_i\neq a_j\), then subtracting the Lagrange equations for \(i\) and \(j\) gives
\[
\partial_{a_i}I_n(a)-\partial_{a_j}I_n(a)
=
2\lambda(a_i-a_j)
=
-c(a_i-a_j),
\]
by \eqref{eq:kappa-small-lambda}. On the other hand,
\[
\partial_{a_i}I_n(a)-\partial_{a_j}I_n(a)
=
-(a_i-a_j)K_{ij}(a).
\]
Hence
\begin{equation}
K_{ij}(a)=c
\qquad\text{whenever }a_i\neq a_j.
\label{eq:kappa-small-distinct-K}
\end{equation}

Now let \(r\neq0\) be a level of multiplicity \(m_r\ge 2\), and choose \(p\neq q\) in the \(r\)-block. Since
\[
h:=e_p-e_q\in T_a\mathcal{M}_{\kappa},
\]
the second-order condition gives \(Q(h)\le 0\).

Because \(J_p(a)=J_q(a)=K_{pq}(a)\) when \(a_p=a_q=r\), \eqref{eq:kappa-small-hessian} and
\eqref{eq:kappa-small-lambda} yield
\[
Q(h)=2\bigl(c-K_{pq}(a)\bigr)\le 0.
\]
Writing
\(
K_{rr}:=K_{pq}(a),
\)
we obtain
\begin{equation}
K_{rr}\ge c.
\label{eq:kappa-small-Krr-ge-c}
\end{equation}

Fix now one index \(i\) in the \(r\)-block. Since \(J_i(a)=K_{rr}\) and \(K_{im}(a)=c\) for every \(m\) outside the \(r\)-block,
\eqref{eq:kappa-small-Ji-general} gives
\[
2rK_{rr}
=
2(2\lambda r+\mu)
-
\Bigl((m_r-1)rK_{rr}+c(\kappa-m_r r)\Bigr).
\]
Using \eqref{eq:kappa-small-lambda}, namely \(2\lambda=-c\), this becomes
\[
2rK_{rr}
=
\delta +(m_r-2)cr-(m_r-1)rK_{rr}.
\]
Thus
\begin{equation}
(m_r+1)K_{rr}
=
(m_r-2)c+\frac{\delta}{r}.
\label{eq:kappa-small-repeated}
\end{equation}
Combining \eqref{eq:kappa-small-Krr-ge-c} and \eqref{eq:kappa-small-repeated}, we get
\begin{equation}
\frac{\delta}{r}\ge 3c.
\label{eq:kappa-small-delta-over-r}
\end{equation}

Finally, if \(a_i\) is a singleton nonzero level, then \eqref{eq:kappa-small-Ji-general} together with
\eqref{eq:kappa-small-distinct-K} yields
\[
2a_i J_i(a)
=
2(2\lambda a_i+\mu)-c(\kappa-a_i)
=
-ca_i+\delta.
\]
Hence
\begin{equation}
J_i(a)=\frac{\delta}{2a_i}-\frac c2
\qquad\text{whenever }a_i\neq0\text{ is a singleton level.}
\label{eq:kappa-small-singleton-J}
\end{equation}
\noindent
{\bf Step 3: The case \(\kappa\ge 0\).}
Assume first that
\[
0\le \kappa<\frac1{\sqrt2}.
\]
From
\(
\delta=(2-\kappa^2)\mu-\kappa I
\)
one obtains
\begin{equation}
(2-\kappa^2)c=2I+\kappa\delta.
\label{eq:kappa-small-c-identity}
\end{equation}
Since \(I>0\), \(\delta>0\), and \(2-\kappa^2>0\), we get
\(
c>0.
\)

\smallskip
\noindent
{\it Step 3a: Every repeated level is positive.}
By \eqref{eq:kappa-small-delta-over-r} and \(c,\delta>0\),
\[
\frac{\delta}{r}\ge 3c>0.
\]
Hence
\(
r>0.
\)
Therefore every repeated level is positive; in particular, every negative level is a singleton.

\smallskip
\noindent
{\it Step 3b: There cannot be three negative coordinates.}
Suppose that
\[
a_i<0,\qquad a_j<0,\qquad a_k<0.
\]
By Step 3a these three levels are pairwise distinct, so \eqref{eq:kappa-small-distinct-K} gives
\[
K_{ij}(a)=K_{jk}(a)=K_{ki}(a)=c.
\]
Define
\[
\alpha:=a_j-a_k,
\qquad
\beta:=a_k-a_i,
\qquad
\gamma:=a_i-a_j,
\]
and
\[
h:=\alpha e_i+\beta e_j+\gamma e_k.
\]
Then
\[
\alpha+\beta+\gamma=0,
\qquad
a_i\alpha+a_j\beta+a_k\gamma=0,
\]
so
\[
h\in T_a\mathcal{M}_{\kappa}.
\]
Using \eqref{eq:kappa-small-hessian}, \eqref{eq:kappa-small-lambda},
\eqref{eq:kappa-small-distinct-K}, and \eqref{eq:kappa-small-singleton-J}, we obtain
\[
Q(h)
=
3c\|h\|^2
-
\delta\left(\frac{\alpha^2}{a_i}+\frac{\beta^2}{a_j}+\frac{\gamma^2}{a_k}\right).
\]
Since \(a_i,a_j,a_k<0\), the parenthesis is negative. Because \(c>0\) and \(\delta>0\), we conclude
\[
Q(h)>0,
\]
contradicting the second-order necessary condition. Thus there are not three negative coordinates.

\smallskip
\noindent
{\it Step 3c: There cannot be exactly two negative coordinates.}
Assume now that \(a\) has exactly two negative coordinates,
\[
x=-p<0,
\qquad
y=-q<0,
\qquad
p,q>0,
\]
and all remaining coordinates are nonnegative. By Step 3a,
\(
p\neq q.
\)
Let
\[
S:=\sum_{a_\ell\ge0} a_\ell = p+q+\kappa,
\qquad
T:=\sum_{a_\ell\ge0} a_\ell^2 = 1-p^2-q^2.
\]
Define \(u\in\mathbb R^n\) by
\[
u_\ell=a_\ell\quad (a_\ell\ge0),
\qquad
u_x:=\frac{Sy-T}{x-y},
\qquad
u_y:=\frac{T-Sx}{x-y}.
\]
Then
\[
u_x+u_y=-S,
\qquad
xu_x+yu_y=-T.
\]
Hence
\[
u\in T_a\mathcal{M}_{\kappa}.
\]

Exactly as in the two-negative computation in the case \(\kappa=0\), but with \(I\) replaced by \(c\)
and \(2\mu\) replaced by \(\delta\), one gets
\begin{equation}
Q(u)=3c\|u\|^2-\delta\sum_{a_j\neq0} \frac{u_j^2}{a_j}.
\label{eq:kappa-small-Q-u-general}
\end{equation}
Now
\[
\sum_{a_j\neq0} \frac{u_j^2}{a_j}
=
S-\frac{u_x^2}{p}-\frac{u_y^2}{q}.
\]
Since \(u_x+u_y=-S\), Cauchy's inequality gives
\[
(p+q)\left(\frac{u_x^2}{p}+\frac{u_y^2}{q}\right)
\ge
(u_x+u_y)^2
=
S^2.
\]
Because \(\kappa\ge0\), we have
\[
S=p+q+\kappa\ge p+q.
\]
Moreover \(S>0\), and the equality case would force \(\kappa=0\). In all cases,
\[
\frac{u_x^2}{p}+\frac{u_y^2}{q}
\ge
\frac{S^2}{p+q}
\ge
S.
\]
If equality held throughout, then \(\kappa=0\) and \(u_x/p=u_y/q\), which together with \(u_x+u_y=-S=-(p+q)\) would imply \(u_x=-p\), \(u_y=-q\). Then \(xu_x+yu_y=p^2+q^2\), contradicting \(xu_x+yu_y=-T<0\). Hence the inequality is strict:
\[
\frac{u_x^2}{p}+\frac{u_y^2}{q}>S.
\]
Therefore
\[
\sum_{a_j\neq0} \frac{u_j^2}{a_j}<0.
\]
Since \(c>0\) and \(\delta>0\), \eqref{eq:kappa-small-Q-u-general} yields
\[
Q(u)>0,
\]
again a contradiction. Hence there are not exactly two negative coordinates.

\smallskip
\noindent
{\it Step 3d: Conclusion in the easy case.}
By Steps 3b and 3c, there is at most one negative coordinate.
On the other hand, not all coordinates can be nonnegative, for otherwise
\[
1=\sum_{j=1}^n a_j^2
\le
\Bigl(\sum_{j=1}^n a_j\Bigr)^2
=
\kappa^2<1,
\]
a contradiction. Hence there is at least one negative coordinate.
Therefore, in the case \(\kappa\ge0\), there is exactly one negative coordinate.

\medskip
\noindent
{\bf Step 4: The mixed case \(\kappa<0\).}
Assume now that $-\frac1{\sqrt2}<\kappa<0,$ and write $k:=-\kappa\in \Bigl(0,\frac1{\sqrt2}\Bigr).$
 
From~\eqref{eq:kappa-small-c-identity}, we get
\[
(2-k^2)c=2I-k\delta.
\]
Hence
\begin{equation}
c>-\frac{k}{2-k^2}\,\delta.
\label{eq:kappa-small-c-lower}
\end{equation}
Since
\[
\frac{3k}{2-k^2}<\sqrt2
\]
\eqref{eq:kappa-small-c-lower} implies
\begin{equation}
3c+\sqrt2\,\delta>0.
\label{eq:kappa-small-3c+sqrt2delta}
\end{equation}

\smallskip
\noindent
{\it Step 4a: There is no repeated negative level.}
Suppose that \(r<0\) is a repeated level, of multiplicity \(m_r\ge2\).
Then
\[
m_r r^2\le \sum_{j=1}^n a_j^2=1,
\]
so
\[
|r|\le \frac1{\sqrt{m_r}}\le \frac1{\sqrt2}.
\]
Since \(\delta>0\) and \(r<0\),
\[
\frac{\delta}{r}\le -\sqrt2\,\delta.
\]
But by \eqref{eq:kappa-small-delta-over-r} and \eqref{eq:kappa-small-3c+sqrt2delta},
\[
\frac{\delta}{r}\ge 3c>-\sqrt2\,\delta,
\]
a contradiction. Hence there is no repeated negative level.

\smallskip
\noindent
{\it Step 4b: There cannot be three negative coordinates.}
Assume that
\[
a_i=-p<0,
\qquad
a_j=-q<0,
\qquad
a_k=-r<0,
\qquad
p,q,r>0.
\]
By Step 4a these are pairwise distinct, so \eqref{eq:kappa-small-distinct-K} applies.
Let
\[
\alpha:=q-r,
\qquad
\beta:=r-p,
\qquad
\gamma:=p-q,
\]
and
\[
h:=\alpha e_i+\beta e_j+\gamma e_k.
\]
Then
\[
\alpha+\beta+\gamma=0,
\qquad
a_i\alpha+a_j\beta+a_k\gamma=0,
\]
so $h\in T_a\mathcal M_\kappa.$

As before,
\[
Q(h)
=
3c\|h\|^2
+
\delta\left(
\frac{(q-r)^2}{p}
+
\frac{(r-p)^2}{q}
+
\frac{(p-q)^2}{r}
\right).
\]

\begin{claim}
It holds that
\begin{equation}
\frac{(q-r)^2}{p}
+
\frac{(r-p)^2}{q}
+
\frac{(p-q)^2}{r}
\ge
\sqrt2\bigl((q-r)^2+(r-p)^2+(p-q)^2\bigr).
\label{eq:kappa-small-three-neg-sharp}
\end{equation}
\end{claim}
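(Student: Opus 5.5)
The strategy is to use the normalization $\sum_j a_j^2=1$. The inequality is false without some upper bound on $p,q,r$, since it is not scale invariant. Because $a_i=-p$, $a_j=-q$, $a_k=-r$ are coordinates of a unit vector, we have $p^2+q^2+r^2\le 1$, and this is the only additional information I intend to use. By the symmetry of \eqref{eq:kappa-small-three-neg-sharp} under permuting $(p,q,r)$, assume $p\ge q\ge r>0$. These values are distinct by Step 4a, although the argument does not need this.

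\textbf{Step 1: weights.} Since $q\le p$ and $p^2+q^2\le 1$, we get $q\le 1/\sqrt2$. Hence $1/r\ge 1/q\ge\sqrt2$. So two of the three weights $1/p,1/q,1/r$ already dominate $\sqrt2$, and only $1/p$ can fall below $\sqrt2$, which happens only when $p>1/\sqrt2$. Subtract $\sqrt2$ times the right-hand side from the left-hand side of \eqref{eq:kappa-small-three-neg-sharp}. The claim becomes
\[
\Bigl(\tfrac1q-\sqrt2\Bigr)(r-p)^2+\Bigl(\tfrac1r-\sqrt2\Bigr)(p-q)^2\;\ge\;\Bigl(\sqrt2-\tfrac1p\Bigr)(q-r)^2 .
\]
If $1/p\ge\sqrt2$, every term on the left is nonnegative and the right side is $\le 0$, so we are done.

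\textbf{Step 2: comparing differences.} Under the ordering, $p-r\ge q-r\ge 0$, so $(r-p)^2\ge (q-r)^2$. The second term on the left is nonnegative and can be discarded. It therefore suffices to show $\tfrac1q-\sqrt2\ge \sqrt2-\tfrac1p$, that is,
\[
\frac1p+\frac1q\ge 2\sqrt2 .
\]

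\textbf{Step 3: the key inequality.} By AM--HM, $\frac1p+\frac1q\ge \frac{4}{p+q}$. By Cauchy--Schwarz, $p+q\le\sqrt{2(p^2+q^2)}\le\sqrt2$. Together these give $\frac1p+\frac1q\ge 4/\sqrt2=2\sqrt2$, which completes the claim.

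The only real obstacle is finding the right pairing of terms, namely matching the possibly deficient weight $1/p$ with the largest squared difference $(p-r)^2$, whose weight is $1/q$. Once that pairing is chosen, the constraint $p^2+q^2\le1$ closes the argument immediately.
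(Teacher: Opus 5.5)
Your proof is correct, and it takes a different route from the paper's. The paper argues symmetrically, with no ordering. Setting $A=(q-r)^2$, $B=(r-p)^2$, $C=(p-q)^2$, it applies Cauchy--Schwarz in Engel form, $\frac{A}{p}+\frac{B}{q}+\frac{C}{r}\ge \frac{(A+B+C)^2}{pA+qB+rC}$. A second Cauchy--Schwarz together with $p^2+q^2+r^2\le 1$ then gives $pA+qB+rC\le\sqrt{A^2+B^2+C^2}$. Finally it uses the identity $x^4+y^4+z^4=\frac12(x^2+y^2+z^2)^2$ for $x+y+z=0$, applied to $x=q-r$, $y=r-p$, $z=p-q$, to conclude $pA+qB+rC\le (A+B+C)/\sqrt2$. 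There the constant $\sqrt2$ comes out of a Newton-type identity for three numbers summing to zero.

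Your argument instead isolates the only weight that can be deficient, namely $1/p$ for the largest variable. You pay for that deficit with the surplus of $1/q$ on the larger square $(p-r)^2\ge (q-r)^2$, which reduces everything to the two-variable inequality $\frac1p+\frac1q\ge 2\sqrt2$. Your version uses only $p^2+q^2\le 1$ for the two largest values, a weaker hypothesis than the full $p^2+q^2+r^2\le1$. It also shows why $\sqrt2$ is the right constant, since equality in $\frac1p+\frac1q\ge 2\sqrt2$ forces $p=q=1/\sqrt2$. The paper's version, in return, is a case-free, permutation-symmetric chain of two standard inequalities, with no WLOG reduction and no split according to whether $p\le 1/\sqrt2$.
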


\begin{proof}[Proof of Claim]
Set
\[
A:=(q-r)^2,
\qquad
B:=(r-p)^2,
\qquad
C:=(p-q)^2.
\]
By the Cauchy-Schwarz inequality
\[
\frac{A}{p}+\frac{B}{q}+\frac{C}{r}
\ge
\frac{(A+B+C)^2}{pA+qB+rC}.
\]
Also,
\[
(pA+qB+rC)^2
\le
(p^2+q^2+r^2)(A^2+B^2+C^2)
\le
A^2+B^2+C^2,
\]
because \(p^2+q^2+r^2\le1\).
Now let
\[
x:=q-r,
\qquad
y:=r-p,
\qquad
z:=p-q.
\]
Then
\[
x+y+z=0,
\qquad
A=x^2,\quad B=y^2,\quad C=z^2.
\]
Therefore
\[
A^2+B^2+C^2
=
x^4+y^4+z^4
=
\frac12(x^2+y^2+z^2)^2
=
\frac12(A+B+C)^2.
\]
Hence
\[
pA+qB+rC
\le
\frac{A+B+C}{\sqrt2},
\]
and \eqref{eq:kappa-small-three-neg-sharp} follows.
\end{proof}

Using \eqref{eq:kappa-small-three-neg-sharp}, we get
\[
Q(h)
\ge
(3c+\sqrt2\,\delta)\|h\|^2>0
\]
by \eqref{eq:kappa-small-3c+sqrt2delta}. This contradicts the second-order necessary condition. Hence there cannot be three negative coordinates.

\smallskip
\noindent
{\it Step 4c: There cannot be exactly two negative coordinates.}
Assume now that \(a\) has exactly two negative coordinates,
\[
x=-p<0,
\qquad
y=-q<0,
\qquad
p,q>0,
\]
and all the others are nonnegative. By Step 4a, $p\neq q.$ Set $s:=p+q,$ and $z:=(p-q)^2.$

Then
\[
k<s<\sqrt2,
\qquad
0<z<\min\{s^2,\,2-s^2\},
\]
because \(p+q>k\), as the sum of the nonnegative coordinates equals \(p+q-k>0\), and
\[
p^2+q^2=\frac{s^2+z}{2}<1.
\]

Let
\[
S:=\sum_{a_\ell\ge0} a_\ell=s-k,
\qquad
T:=\sum_{a_\ell\ge0} a_\ell^2=1-p^2-q^2.
\]
Define \(u\in\mathbb R^n\) by
\[
u_\ell=a_\ell\quad (a_\ell\ge0),
\qquad
u_x:=\frac{Sy-T}{x-y},
\qquad
u_y:=\frac{T-Sx}{x-y}.
\]
Then $u\in T_a\mathcal{M}_{\kappa},$ and, as in \eqref{eq:kappa-small-Q-u-general},
\[
Q(u)=3c\|u\|^2-\delta\sum_{a_j\neq0} \frac{u_j^2}{a_j}.
\]

\begin{claim}
We have
\begin{equation}
-\sum_{a_j\neq0} \frac{u_j^2}{a_j}\ge \sqrt2\,\|u\|^2.
\label{eq:kappa-small-two-neg-sharp}
\end{equation}
\end{claim}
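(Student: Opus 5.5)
The plan is to make the claim completely explicit and reduce it to a polynomial inequality in a few real parameters. Since $u_\ell=a_\ell$ on the nonnegative coordinates (and $u_\ell=0$ on zero coordinates), the positive coordinates contribute exactly $S$ to $\sum_{a_j\neq 0}u_j^2/a_j$. With $x=-p$, $y=-q$ the claim \eqref{eq:kappa-small-two-neg-sharp} is therefore equivalent to
\[
\frac{u_x^2}{p}+\frac{u_y^2}{q}-S\;\ge\;\sqrt2\,\bigl(T+u_x^2+u_y^2\bigr).
\]
Here $u_x,u_y$ are determined by the linear system $u_x+u_y=-S$, $p u_x+q u_y=T$. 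Solving it gives
\[
u_x=\frac{Sq+T}{p-q},\qquad u_y=-\frac{Sp+T}{p-q}.
\]
Multiplying through by $pq(p-q)^2>0$ turns the claim into
\[
q(Sq+T)^2+p(Sp+T)^2-pq(p-q)^2S\;\ge\;\sqrt2\Bigl(pq(p-q)^2T+pq\bigl[(Sq+T)^2+(Sp+T)^2\bigr]\Bigr).
\]
This is a polynomial inequality in $p,q,S,T$.

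Next I would record the constraints available in the setting of Step 4c:
\[
T=1-p^2-q^2>0,\qquad 0<S=s-k,\qquad T\le S^2,\qquad k\in\bigl(0,\tfrac1{\sqrt2}\bigr).
\]
The inequality $T\le S^2$ holds because $\sum b_\ell^2\le(\sum b_\ell)^2$ for nonnegative $b_\ell$. The last condition gives $S\in(s-\tfrac1{\sqrt2},\,s)$. I would then pass to the variables $s=p+q$ and $z=(p-q)^2$, using $pq=(s^2-z)/4$, $p^2+q^2=(s^2+z)/2$ and $T=1-(s^2+z)/2$. The left-hand side then reads $\bigl(\text{a quadratic form in }(S,T)\text{ with coefficients in }s,z\bigr)/\bigl(pq\,z\bigr)$. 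Because of the factor $z^{-1}$ the left side blows up as $z\to0$, which is the mechanism behind the strict inequality.

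To handle the dependence on $S$, I would view the difference (LHS $-$ RHS) for fixed $s,z$ as a quadratic polynomial in $S$. I would then show that it is nonnegative on the admissible interval, either by locating its vertex or by checking the endpoints together with its sign of convexity. The coefficient of $S^2$ is $(p^3+q^3)-\sqrt2\,pq(p^2+q^2)$. By AM--GM this is at least $(p^3+q^3)\bigl(1-\sqrt2\,\tfrac{pq(p^2+q^2)}{p^3+q^3}\bigr)$, and since $p^2+q^2<1$ I expect it to be positive. The terms linear and constant in $S$ involve $T$, and there I would substitute $T=1-(s^2+z)/2$ and use $T\le S^2$ to bound the mixed term $2ST(q^2+p^2)$ from below.

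The main obstacle is the final two-parameter inequality in $(s,z)$ on the region $0<z<\min\{s^2,2-s^2\}$, $k<s<\sqrt2$. This region is exactly where the constant $\sqrt2$ should be sharp, in analogy with the three-negative Claim, where $\sqrt2$ came from $x^4+y^4+z^4=\tfrac12(x^2+y^2+z^2)^2$. My first attempt would mimic that Claim. Writing $w=(u_x,u_y)$, I would apply Cauchy--Schwarz in the form
\[
\frac{u_x^2}{p}+\frac{u_y^2}{q}\ge\frac{(u_x^2+u_y^2)^2}{p u_x^2+q u_y^2}
\]
and combine it with $p^2+q^2\le1$, giving $p u_x^2+q u_y^2\le\sqrt{u_x^4+u_y^4}$. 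The remaining work is to absorb the extra $S+\sqrt2\,T$ terms, using $p u_x+q u_y=T$ and $u_x+u_y=-S$. If this does not close, I would fall back on the brute-force route via the quadratic in $S$ described above and verify the resulting polynomial inequality in $(s,z)$ on its boundary curves.
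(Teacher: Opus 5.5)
Your reformulation is correct. With $u_x=(Sq+T)/(p-q)$ and $u_y=-(Sp+T)/(p-q)$ the claim reads $\frac{u_x^2}{p}+\frac{u_y^2}{q}\ge S+\sqrt2\,T+\sqrt2(u_x^2+u_y^2)$. Your cleared-denominator difference is exactly $\frac18\widetilde P_{k,s}(z)$ from the paper once you set $S=s-k$ and $T=1-\frac{s^2+z}{2}$. The gap is that the proposal stops where the proof begins, and the one concrete argument you give cannot work.

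\textbf{The Cauchy--Schwarz route.} After Cauchy--Schwarz and $p^2+q^2\le1$, your lower bound for $\frac{u_x^2}{p}+\frac{u_y^2}{q}$ is $\frac{(u_x^2+u_y^2)^2}{\sqrt{u_x^4+u_y^4}}$. This never exceeds $\sqrt2(u_x^2+u_y^2)$, because $(u_x^2+u_y^2)^2\le 2(u_x^4+u_y^4)$. You need at least $\sqrt2(u_x^2+u_y^2)+S+\sqrt2\,T$, with $S+\sqrt2\,T>0$. So as set up, nothing is left to absorb the extra terms, and this route can never close. The analogy with the three-negative Claim fails for a structural reason. There the tangent vector was supported only on the negative coordinates and had zero coordinate sum, which gave the exact identity $A^2+B^2+C^2=\frac12(A+B+C)^2$. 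Here $u_x+u_y=-S\neq0$, and $u$ also lives on the nonnegative coordinates, which is where $-S$ and $\sqrt2\,T$ come from.

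\textbf{The fallback.} It has the right shape, but it omits the two facts that make a finite endpoint check legitimate.

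First, the $S^2$-coefficient $p^3+q^3-\sqrt2\,pq(p^2+q^2)$ is indeed positive. From $(t^3+1)^2-2t^2(t^2+1)=(t-1)^2(t^4+2t^3+t^2+2t+1)$ one gets $p^3+q^3\ge\sqrt2\,pq\sqrt{p^2+q^2}$, and then $p^2+q^2<1$ finishes it. So the quadratic in $S$ is \emph{convex}, and checking the ends of the $S$-interval proves nothing by itself; you need control of the vertex.

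The paper supplies this by proving $\partial_k\widetilde P_{k,s}(z)=-2M_{k,s}(z)<0$. Here $M_{k,s}$ is concave in $z$ and positive at $z=0$, at $z=s^2$ (for $s\le1$) and at $z=2-s^2$ (for $s\ge1$). Hence the worst case is $k=\frac1{\sqrt2}$, i.e. $S=s-\frac1{\sqrt2}$, over the region $0<k<\frac1{\sqrt2}$, $\frac1{\sqrt2}<s<\sqrt2$, $0<z<\min\{s^2,2-s^2\}$. The constraint $T\le S^2$ is used only to obtain $s>\frac1{\sqrt2}$, not to bound mixed terms.

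Second, $\widetilde P_{1/\sqrt2,s}(z)$ has $z^2$-coefficient $-\frac{\sqrt2}{2}$, so it is concave in $z$. Only because of this does it suffice to evaluate it at the ends of the $z$-interval. There it factors as $\frac{\sqrt2}{2}s(\sqrt2-s)(s-2\sqrt2)^2$, $4s(s-\sqrt2)^2$ and $2\sqrt2(\sqrt2-s)(s-\frac1{\sqrt2})(s^2+1)$. Your plan to check ``boundary curves'' of the $(s,z)$-region, without such a concavity statement, does not control the interior.

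Finally, the $z^{-1}$ blow-up is not the mechanism you suggest. The norm $\|u\|^2=\frac{(ks-2)^2+(k^2-2)z}{2z}$ blows up as well. So near $z=0$ one still has to compare leading coefficients, and that comparison is exactly the positivity of $\widetilde P_{k,s}(0)$.
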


\begin{proof}[Proof of Claim]
We first record one additional consequence of the constraints. Recall that $s=p+q,$ $z=(p-q)^2,$ and that the sum of the nonnegative coordinates is $S=s-k>0,$ whereas their sum of squares is $T=1-p^2-q^2.$ Since the nonnegative coordinates are nonnegative, we have $T\le S^2<s^2.$ On the other hand, $p^2+q^2<s^2.$ Therefore $1=p^2+q^2+T<2s^2,$ and hence
\begin{equation}
s>\frac1{\sqrt2}.
\label{eq:kappa-small-s-lower}
\end{equation}
Thus the admissible range is
\[
0<k<\frac1{\sqrt2},
\qquad
\frac1{\sqrt2}<s<\sqrt2,
\qquad
0<z<\min\{s^2,2-s^2\}.
\]

A direct computation gives
\begin{equation}
\|u\|^2
=
\frac{k^2s^2+k^2z-4ks-2z+4}{2z},
\label{eq:kappa-small-u-norm-corrected}
\end{equation}
and
\begin{equation}
-\sum_{a_j\neq0} \frac{u_j^2}{a_j}
-\sqrt2\,\|u\|^2
=
\frac{\widetilde P_{k,s}(z)}{2z(s^2-z)},
\label{eq:kappa-small-Ptilde}
\end{equation}
where
\begin{align}
\widetilde P_{k,s}(z)
={}&
(\sqrt2\,k^2+2k-2\sqrt2)z^2
\notag\\
&+
(6k^2s-2ks^2-4\sqrt2\,ks-8k+2\sqrt2\,s^2+4\sqrt2)z
\notag\\
&-\sqrt2\,k^2s^4
+2k^2s^3
+4\sqrt2\,ks^3
-8ks^2
-4\sqrt2\,s^2
+8s.
\label{eq:kappa-small-Ptilde-def}
\end{align}
Since $2z(s^2-z)>0,$ it is enough to prove that $\widetilde P_{k,s}(z)>0$ throughout the admissible range.

We first show that, for fixed \(s,z\), the polynomial
\(\widetilde P_{k,s}(z)\) is decreasing in \(k\). Differentiating
\eqref{eq:kappa-small-Ptilde-def} with respect to \(k\), we get
\begin{equation}
\frac{\partial}{\partial k}\widetilde P_{k,s}(z)
=
-2M_{k,s}(z),
\label{eq:kappa-small-Ptilde-derivative}
\end{equation}
where
\begin{align}
M_{k,s}(z)
={}&
-(\sqrt2\,k+1)z^2
+
(s^2+2\sqrt2\,s+4-6ks)z
\notag\\
&+
s^2(\sqrt2\,ks^2-2ks-2\sqrt2\,s+4).
\label{eq:kappa-small-Mtilde}
\end{align}
For fixed \(k,s\), the polynomial \(M_{k,s}\) is concave in \(z\).
Therefore it is enough to check its positivity at the endpoints of the
admissible interval.

Assume first that
\[
\frac1{\sqrt2}<s\le 1.
\]
Then the admissible interval is \(0<z<s^2\). We have
\[
M_{k,s}(0)
=
s^2(2-\sqrt2\,s)(2-ks)>0,
\]
and
\[
M_{k,s}(s^2)
=
8s^2(1-ks)>0.
\]
Hence
\(
M_{k,s}(z)>0
\)
for $(0<z<s^2)$.
Assume next that
\[
1\le s<\sqrt2.
\]
Then the admissible interval is \(0<z<2-s^2\). Again,
\(
M_{k,s}(0)>0.
\)
At the other endpoint we get
\[
M_{k,s}(2-s^2)
=
2(\sqrt2-s)
\Big[
s^3+3\sqrt2s^2+3s+\sqrt2
-
k(2s^2+4\sqrt2s+2)
\Big].
\]
The expression in brackets is decreasing in \(k\), hence it is minimized
at \(k=1/\sqrt2\). At this endpoint it equals
\[
s(s^2+2\sqrt2\,s-1)>0.
\]
Thus
\[
M_{k,s}(2-s^2)>0,
\]
and therefore
\(
M_{k,s}(z)>0
\)
for
\((0<z<2-s^2)\).
By \eqref{eq:kappa-small-Ptilde-derivative},
\[
\frac{\partial}{\partial k}\widetilde P_{k,s}(z)<0.
\]
Since \(k<1/\sqrt2\), it follows that
\[
\widetilde P_{k,s}(z)>
\widetilde P_{1/\sqrt2,s}(z).
\]

It remains to prove positivity at \(k=1/\sqrt2\). A direct substitution gives
\begin{align}
\widetilde P_{1/\sqrt2,s}(z)
={}&
-\frac{\sqrt2}{2}z^2
+
(\sqrt2\,s^2-s)z
\notag\\
&-\frac{\sqrt2}{2}s^4
+5s^3
-8\sqrt2\,s^2
+8s.
\label{eq:kappa-small-Ptilde-edge}
\end{align}
This is a concave quadratic polynomial in \(z\). Hence its minimum on the
admissible interval is attained at an endpoint.

If
\[
\frac1{\sqrt2}<s\le 1,
\]
then the endpoints are \(0\) and \(s^2\). We compute
\[
\widetilde P_{1/\sqrt2,s}(0)
=
\frac{\sqrt2}{2}s(\sqrt2-s)(s-2\sqrt2)^2>0,
\]
and
\[
\widetilde P_{1/\sqrt2,s}(s^2)
=
4s(s-\sqrt2)^2>0.
\]
Therefore
\(
\widetilde P_{1/\sqrt2,s}(z)>0\)
for
\((0<z<s^2).
\)
If
\[
1\le s<\sqrt2,
\]
then the endpoints are \(0\) and \(2-s^2\). We already have
\[
\widetilde P_{1/\sqrt2,s}(0)>0,
\]
and
\[
\widetilde P_{1/\sqrt2,s}(2-s^2)
=
2\sqrt2(\sqrt2-s)
\left(s-\frac1{\sqrt2}\right)(s^2+1)>0.
\]
Consequently,
\[
\widetilde P_{k,s}(z)>0
\]
throughout the admissible range. By \eqref{eq:kappa-small-Ptilde}, this gives
\[
-\sum_{a_j\neq0} \frac{u_j^2}{a_j}
-\sqrt2\,\|u\|^2>0,
\]
and hence
\[
-\sum_{a_j\neq0} \frac{u_j^2}{a_j}
>
\sqrt2\,\|u\|^2.
\]
This proves \eqref{eq:kappa-small-two-neg-sharp}.
\end{proof}

By the claim,
\[
Q(u)\ge (3c+\sqrt2\,\delta)\|u\|^2>0
\]
by \eqref{eq:kappa-small-3c+sqrt2delta}. This contradicts the second-order necessary condition. Thus the case of exactly two negative coordinates is impossible.

\smallskip
\noindent
{\it Step 4d: Conclusion in the mixed case.}
By Steps 4b and 4c, there is at most one negative coordinate.
Since \(\kappa<0\), at least one negative coordinate must exist.
Therefore, in the case \(\kappa<0\), there is exactly one negative coordinate.

\medskip
\noindent
Combining Steps 3 and 4, we conclude that under the assumptions
\[
|\kappa|<\frac1{\sqrt2},
\qquad
\delta>0,
\]
the local maximizer \(a\) has exactly one negative coordinate.
\end{proof}

\begin{corollary}\label{cor:kappa-small-delta-negative}
Let
\[
I_n(a):=\int_{-\infty}^{\infty}\prod_{j=1}^n \frac{1}{1+i t a_j}\,dt,
\qquad
\mathcal{M}_{\kappa}:=\Bigl\{a\in\mathbb R^n:\ \sum_{j=1}^n a_j=\kappa,\ \sum_{j=1}^n a_j^2=1\Bigr\},
\]
and assume $|\kappa|<\frac1{\sqrt2}.$
 
Let \(a\in\mathcal M_\kappa\) be a smooth local maximum of \(I_n\), and let
\(\lambda,\mu\in\mathbb R\) be the Lagrange multipliers:
\[
\partial_{a_j}I_n(a)=2\lambda a_j+\mu.
\]
Set
\[
I:=I_n(a),
\qquad
\delta:=(2-\kappa^2)\mu-\kappa I.
\]
If $\delta<0,$ then \(a\) has exactly one positive coordinate.
\end{corollary}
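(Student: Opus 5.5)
The plan is to reduce Corollary~\ref{cor:kappa-small-delta-negative} to Theorem~\ref{thm:kappa-small-delta-positive} by the reflection $a\mapsto -a$, exactly as Corollary~\ref{cor:mu-negative-or-zero} was deduced from Theorem~\ref{thm:mu-positive-one-negative}. The first step is to record how $I_n$ behaves under this map. Substituting $t\mapsto -t$ in the defining integral gives $I_n(-a)=I_n(a)$. Differentiating, we get $\partial_{a_j}I_n(-a)=-(\partial_{a_j}I_n)(a)$, and the Hessian satisfies $D^2I_n(-a)=D^2I_n(a)$ as a bilinear form.

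Next, observe that $a\mapsto -a$ maps $\mathcal M_\kappa$ bijectively onto $\mathcal M_{-\kappa}$, and $|-\kappa|<1/\sqrt2$. So if $a$ is a smooth local maximum of $I_n$ on $\mathcal M_\kappa$, then $a':=-a$ is a smooth local maximum of $I_n$ on $\mathcal M_{-\kappa}$, with the same support structure and $I_n(a')=I_n(a)=I$.

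We then identify the multipliers at $a'$. From $\partial_{a_j}I_n(a)=2\lambda a_j+\mu$ we get
\[
\partial_{a_j}I_n(a')=-(2\lambda a_j+\mu)=2\lambda a'_j+(-\mu).
\]
Hence $a'$ has multipliers $\lambda'=\lambda$ and $\mu'=-\mu$. As a consistency check, $c'=I+(-\kappa)(-\mu)=c$, which matches $2\lambda'=-c'$. The associated quantity is
\[
\delta'=(2-\kappa^2)\mu'-(-\kappa)I=-\bigl((2-\kappa^2)\mu-\kappa I\bigr)=-\delta>0.
\]

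Theorem~\ref{thm:kappa-small-delta-positive}, applied with $-\kappa$ in place of $\kappa$, now shows that $a'$ has exactly one negative coordinate. Therefore $a$ has exactly one positive coordinate.

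The only point needing care is that the second-order condition is transported correctly. The tangent spaces correspond via $T_{a'}\mathcal M_{-\kappa}=T_a\mathcal M_\kappa$, since the defining conditions $\sum h_j=0$ and $\sum a_jh_j=0$ are invariant under $a\mapsto -a$. Since $D^2I_n$ and $\lambda$ are unchanged, the quadratic form $Q$ at $a'$ is the same as at $a$. This is routine, and I expect no genuine obstacle beyond checking these sign conventions.
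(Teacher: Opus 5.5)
Your proposal is correct and is essentially the paper's own proof: the paper also sets $b=-a$, uses $I_n(-a)=I_n(a)$ via $t\mapsto -t$, reads off the multipliers $\lambda'=\lambda$, $\mu'=-\mu$ on $\mathcal M_{-\kappa}$, computes $\delta(b)=-\delta(a)>0$, and applies Theorem~\ref{thm:kappa-small-delta-positive}. Your extra check that the tangent space and the quadratic form $Q$ are unchanged under $a\mapsto -a$ is a harmless addition that the paper leaves implicit.
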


\begin{proof}
Set $b:=-a.$ Then $\sum_{j=1}^n b_j=-\kappa,$ $\sum_{j=1}^n b_j^2=1,$ and $|-\kappa|=|\kappa|<\frac1{\sqrt2}.$
 Moreover, $I_n(b)=I_n(a),$ because the change of variables \(t\mapsto -t\) leaves the integral invariant. If
\[
\partial_{a_j}I_n(a)=2\lambda a_j+\mu,
\]
then
\[
\partial_{b_j}I_n(b)=2\lambda b_j-\mu.
\]
Thus the new multiplier \(\mu\) is \(-\mu\), while \(I_n\) is unchanged and \(\kappa\) is replaced by \(-\kappa\). Therefore
\[
\delta(b)
=
(2-\kappa^2)(-\mu)-(-\kappa)I_n(a)
=
-\bigl((2-\kappa^2)\mu-\kappa I_n(a)\bigr)
=
-\delta(a)>0.
\]
Applying Theorem~\ref{thm:kappa-small-delta-positive} to \(b\), we conclude that \(b\) has exactly one negative coordinate.
Equivalently, \(a\) has exactly one positive coordinate.
\end{proof}
\begin{proof}[Proof of Theorem~\ref{aff sec thm}]
Let \(a\) be a global maximizer and set
\[
I:=I_n(a),
\qquad
\delta:=(2-\kappa^2)\mu-\kappa I.
\]
We distinguish three cases.

If \(\delta>0\), then Theorem~\ref{thm:kappa-small-delta-positive} shows that
\(a\) has exactly one negative coordinate. Hence
Proposition~\ref{prop:one-negative-rest-positive-affine} gives
\[
\operatorname{vol}_{n-1}(\Delta_n\cap H_a)
\le
\frac{\sqrt{n+1-\kappa^2}}{(n-1)!}
\frac1{\sqrt{2-\kappa^2}}.
\]

If \(\delta<0\), then Corollary~\ref{cor:kappa-small-delta-negative} shows that
\(a\) has exactly one positive coordinate. Applying
Proposition~\ref{prop:one-negative-rest-positive-affine} to \(-a\), with
\(-\kappa\) in place of \(\kappa\), gives the same upper bound.

It remains to consider the case \(\delta=0\). We claim that in this case \(a\)
has exactly one negative coordinate. Indeed, from
\[
(2-\kappa^2)c=2I+\kappa\delta
\]
we get
\[
c=\frac{2I}{2-\kappa^2}>0.
\]
Now let \(r\neq0\) be a repeated level. The derivation of
\eqref{eq:kappa-small-delta-over-r} gives
\[
\frac{\delta}{r}\ge 3c.
\]
Since \(\delta=0\) and \(c>0\), this is impossible. Hence no nonzero level
can be repeated.

Suppose that there are three negative coordinates. Then they are pairwise
distinct. Repeating the argument of Step 3b in the proof of
Theorem~\ref{thm:kappa-small-delta-positive}, with \(\delta=0\), gives a
tangent vector \(h\) such that
\[
Q(h)=3c\|h\|^2>0,
\]
contradicting the second-order necessary condition.

Suppose next that there are exactly two negative coordinates. Define the
tangent vector \(u\) as in Step 3c in the proof of
Theorem~\ref{thm:kappa-small-delta-positive}. Since \(|\kappa|<1\), not all
coordinates can be nonpositive; hence \(u\neq0\). The computation
\eqref{eq:kappa-small-Q-u-general}, with \(\delta=0\), gives
\[
Q(u)=3c\|u\|^2>0,
\]
again a contradiction. Thus \(a\) has at most one negative coordinate.

Finally, \(a\) must have at least one negative coordinate. Otherwise all
coordinates would be nonnegative, and then
\[
1=\sum_{j=1}^{n}a_j^2
\le
\left(\sum_{j=1}^{n}a_j\right)^2
=
\kappa^2<1,
\]
a contradiction. Therefore \(a\) has exactly one negative coordinate.

Thus, also in the case \(\delta=0\), Proposition~\ref{prop:one-negative-rest-positive-affine}
applies and gives
\[
\operatorname{vol}_{n-1}(\Delta_n\cap H_a)
\le
\frac{\sqrt{n+1-\kappa^2}}{(n-1)!}
\frac1{\sqrt{2-\kappa^2}}.
\]

The equality case follows from the equality case in
Proposition~\ref{prop:one-negative-rest-positive-affine}, up to permutation
of the coordinates.
\end{proof}

\section{Restricted sections of $B_1^n$}\label{sec: sec B_1}
A central hyperplane section of $B_1^n$ passing through the barycenter of a facet is characterized by two constraints on the normal vector $a$. Without loss of generality, we consider the facet $F_+=\operatorname{conv}\{e_1,\ldots,e_n\},$ whose barycenter is  $f_+=\left(\frac1n,\ldots,\frac1n\right).$ The condition \(a^\perp\) passes through \(f_+\) is equivalent to $\sum_{j=1}^n a_j=0.$
Accordingly, we consider the set
$$
\mathcal{M}:=\left\{a\in\mathbb{R}^n:\ \sum_{r=1}^n a_r^2=1,\ \sum_{r=1}^n a_r=0\right\}.
$$

Our main theorem is the following.

\begin{theorem}\label{thm:section B_1^n centroid}
The function
$$
a\in \mathcal{M}\mapsto \operatorname{vol}_{n-1}(B_1^n\cap a^\perp)
$$
attains its minimum, when $n$ is even, at the half-plus/half-minus vector
$$
\left(
\underbrace{\frac{1}{\sqrt{n}},\ldots,\frac{1}{\sqrt{n}}}_{\frac{n}{2}\text{ times}},
\underbrace{-\frac{1}{\sqrt{n}},\ldots,-\frac{1}{\sqrt{n}}}_{\frac{n}{2}\text{ times}}
\right),
$$
and, when $n$ is odd, at the $\bigl(\frac{n-1}{2},\frac{n+1}{2}\bigr)$-split vector
$$
\left(
\underbrace{\sqrt{\frac{n+1}{n(n-1)}},\ldots,\sqrt{\frac{n+1}{n(n-1)}}}_{\frac{n-1}{2}\text{ times}},
\underbrace{-\sqrt{\frac{n-1}{n(n+1)}},\ldots,-\sqrt{\frac{n-1}{n(n+1)}}}_{\frac{n+1}{2}\text{ times}}
\right).
$$
\end{theorem}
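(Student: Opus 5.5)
We plan to avoid working with the section function directly and instead use formula \eqref{sections of B_1^n}. Write $x_j:=a_j^2$. Then
\[
\prod_{j=1}^n(1+a_j^2t^2)=\sum_{k=0}^n e_k(x_1,\ldots,x_n)\,t^{2k}.
\]
The integrand in \eqref{sections of B_1^n} is therefore a decreasing function of each $e_k(x)$, $k=1,\ldots,n$, because all $t^{2k}\ge 0$. Moreover $e_1(x)=\sum a_j^2=1$ is constant on $\mathcal M$. The theorem thus follows once we show a stronger, pointwise-in-$k$ statement: for every $2\le k\le n$, the balanced vector $a^\ast$ of the statement maximizes
\[
a\mapsto e_k(a_1^2,\ldots,a_n^2) \quad\text{on } \mathcal M .
\]
Here $a^\ast$ is the half-plus/half-minus vector for $n$ even, and the $\bigl(\frac{n-1}{2},\frac{n+1}{2}\bigr)$-split for $n$ odd. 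A single vector maximizing every $e_k$ at once then minimizes the integral term by term.

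\textbf{Step 1: reduction to two-level vectors.} Fix $a\in\mathcal M$ and let $P,N$ be the index sets of its positive and negative coordinates, with $|P|=p$ and $|N|=q$. Then $\sum_{P}a_j=\sum_{N}|a_j|=:s$. We will show that replacing the positive block by its mean $s/p$ and the negative block by $-s/q$, followed by rescaling so that $\sum a_j^2=1$, does not decrease $e_k(a^2)$. Averaging keeps $\sum a_j=0$ and decreases $\sum a_j^2$, so the rescaling multiplies $e_k$ by a factor $\ge1$. The real content is therefore the inequality
\[
\frac{e_k(y_1^2,\ldots,y_n^2)}{\bigl(\sum y_j^2\bigr)^k}\le \frac{e_k(\bar y_1^2,\ldots,\bar y_n^2)}{\bigl(\sum \bar y_j^2\bigr)^k},
\]
where $\bar y$ is the block-averaged vector. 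Proposition~\ref{e_k Schur} does not apply directly: Schur--Ostrowski applied to $y\mapsto e_k(y^2)$ gives the factor $(y_i-y_j)^2\bigl(e_{k-1}(\hat x_{ij})-y_iy_j\,e_{k-2}(\hat x_{ij})\bigr)$, whose sign is not fixed. We will therefore prove the normalized inequality by a two-coordinate smoothing argument. Move $y_i,y_j$ in the same block toward each other with $y_i+y_j$ fixed. Both $e_k(y^2)$ and $\sum y^2$ are quadratic polynomials in $y_iy_j$ along this path, so the ratio becomes a one-variable rational function to analyze. Where needed we will use Newton's inequality $e_{k-1}^2\ge e_ke_{k-2}\cdot\frac{k(n-k+2)}{(k-1)(n-k+1)}$ to control the sign. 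We expect this step to be the main obstacle.

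\textbf{Step 2: the discrete problem.} For a two-level vector with $p$ entries equal to $\alpha$ and $q$ entries equal to $-\beta$ (the other $n-p-q$ entries zero), the constraints force $\alpha^2=\frac{q}{p(p+q)}$ and $\beta^2=\frac{p}{q(p+q)}$. Hence
\[
E_k(p,q):=e_k=\frac{1}{(p+q)^k}\sum_{i}\binom{p}{i}\binom{q}{k-i}\Bigl(\frac qp\Bigr)^{i}\Bigl(\frac pq\Bigr)^{k-i}.
\]
We will show two monotonicity facts. First, $E_k(p,q)$ increases when a zero coordinate is absorbed, i.e.\ passing from $(p,q)$ to $(p+1,q)$ or $(p,q+1)$ with $p+q<n$. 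Second, for $p+q=n$, $E_k$ increases as $|p-q|$ decreases. Comparing consecutive terms, and using the symmetry $E_k(p,q)=E_k(q,p)$, should make both facts elementary. They pin the maximizer at $p=q=n/2$ for $n$ even and at $\{p,q\}=\{\frac{n-1}{2},\frac{n+1}{2}\}$ for $n$ odd; the choice of sign is irrelevant by the symmetry $a\mapsto -a$.

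Finally, we insert these simultaneous maximal bounds on $e_k$ into \eqref{sections of B_1^n}. This yields the minimum of $\operatorname{vol}_{n-1}(B_1^n\cap a^\perp)$ on $\mathcal M$ at the stated vectors.
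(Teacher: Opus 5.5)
\textbf{There is a genuine gap, in both steps.}

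Your top-level reduction is the same as the paper's. Since $1/G_t=\sum_k e_k(a_1^2,\ldots,a_n^2)\,t^{2k}$, it suffices that one vector maximizes every $e_k(a^2)$ on $\mathcal M$.

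\textbf{Step 1 is false, not merely hard.} No smoothing argument, with or without Newton's inequalities, can prove the normalized block-averaging inequality. Take $n=4$ and $y=(1,-\frac12+\varepsilon,-\frac12+\varepsilon,-2\varepsilon)$, so $p=1$ and $q=3$. As $\varepsilon\to0$,
$e_3(y^2)/(\sum_j y_j^2)^3\to e_3(\frac23,\frac16,\frac16,0)=\frac1{54}$.
The block-averaged vector $(1,-\frac13,-\frac13,-\frac13)$ gives only $e_3(\frac34,\frac1{12},\frac1{12},\frac1{12})=\frac{7}{432}<\frac{8}{432}=\frac1{54}$. Already at $\varepsilon=0.01$ the ratio is about $0.0178>0.0162$. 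The same happens for $k=2$ with $y=(1,-1+2\varepsilon,-\varepsilon,-\varepsilon)$: the normalized $e_2$ tends to $\frac14$, while averaging gives $\frac5{24}$.

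So the two-level structure is not a monotonicity property of sign strata; it holds only for global maximizers. The paper obtains it from the second-order necessary condition at a global maximizer. Suppose values $-a<0<b\neq c$ occur. Take the tangent direction $h=(b-c,\,a+c,\,-(a+b))$ on those three coordinates, and use the first-order relation $(a-b-c)T_0=abc\,U_0$. Here $T_0$ and $U_0$ are $e_{k-2}$ and $e_{k-3}$ of the remaining squared coordinates. This gives $Q(h)=2U_0(a+b)^2(a+c)^2(b-c)^2>0$, a contradiction. Two distinct negative values are excluded by $a\mapsto -a$.

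This argument needs $U_0>0$, hence $k\ge 3$. For $k=2$ the paper instead writes $e_2(a^2)=\frac{1-\sum_j a_j^4}{2}$ and invokes the known minimization of $\sum_j a_j^4$ on $\mathcal M$ from \cite{brazitikos2025sharp}. Your plan treats $k=2$ uniformly with the other $k$ and has no substitute for this input.

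\textbf{The first monotonicity fact in Step 2 is also false.} The same numbers show it: $E_3(1,3)=\frac7{432}<\frac1{54}=E_3(1,2)$ and $E_2(1,3)=\frac5{24}<\frac14=E_2(1,2)$. So absorbing a zero coordinate into the larger block can decrease $e_k$.

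Your second fact, balancing at fixed support, is correct. It is the paper's Lemma~\ref{lem:comparison-for-gamma}, proved by majorization and Schur-concavity. The support-size comparison, however, has to be made between \emph{balanced} configurations. For even support this is the majorization of the uniform vector. For odd support $N$, the paper shows in Lemma~\ref{lem:comparison-N-odd} that the $(\frac{N-1}{2},\frac{N+1}{2})$ split padded with two zeros majorizes the $(\frac{N+1}{2},\frac{N+3}{2})$ split on $N+2$ coordinates.
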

\begin{conjecture}
The function
\[
        a\in M\longmapsto \operatorname{vol}_{n-1}(B_1^n\cap a^\perp)
\]
attains its maximum for $n<6$ at
$$
\left(\frac{1}{\sqrt{2}},-\frac{1}{\sqrt{2}},0,\ldots,0\right),
$$
whereas for $n\geq 6$ it is maximized at
$$
\left(
\underbrace{\frac{1}{\sqrt{n(n-1)}},\ldots,\frac{1}{\sqrt{n(n-1)}}}_{n-1\text{ times}},
-\sqrt{\frac{n-1}{n}}
\right).
$$  
\end{conjecture}
\begin{remark}
The $p$-R\'enyi entropy of a continuous random variable $X$ with density $f$ is defined by
$$
h_p(X)=\frac{1}{1-p}\log\left(\int_{\mathbb{R}} f(x)^p\,dx\right),
$$
for $p\in(0,\infty)\setminus\{1\}$. As usual, one recovers the Shannon entropy in the limit $p\to 1$, namely $h_1(X)=h(X)$, while
$$
h_\infty(X)=-\log \|f\|_\infty.
$$

A simple observation is that if $U$ and $V$ are i.i.d.\ log-concave random variables, then
$$
h_2(U)=h_\infty(U-V).
$$
Indeed, if $f$ denotes the density of $U$, then
$$
h_2(U)=-\log\left(\int_{\mathbb{R}} f(x)^2\,dx\right),
$$
whereas the density of $U-V$ is $f\ast \widetilde f$, with $\widetilde f(x)=f(-x)$, and its maximum is attained at $0$; hence
$$
\|f\ast \widetilde f\|_\infty=(f\ast \widetilde f)(0)=\int_{\mathbb{R}} f(x)^2\,dx.
$$

Now let $a_1,\ldots,a_n\in\mathbb{R}$ satisfy $\sum_{j=1}^n a_j^2=1$, and let $\mathcal E_1,\ldots,\mathcal E_n$ be i.i.d.\ standard exponential random variables. Then
\begin{equation}\label{h_2=h_{infty}}
h_2\left(\sum_{j=1}^n a_j\mathcal E_j\right)
=
h_\infty\left(\sum_{j=1}^n a_j(\mathcal E_j-\widetilde{\mathcal E}_j)\right),
\end{equation}
where $\widetilde{\mathcal E}_j$ denotes an independent copy of $\mathcal E_j$. Setting
$$
V_j:=\mathcal E_j-\widetilde{\mathcal E}_j,
$$
we see that $V_1,\ldots,V_n$ are i.i.d.\ standard Laplace random variables. Since Laplace random variables are Gaussian  mixtures, one obtains sharp bounds from the corresponding comparison principles; see \cite{eskenazis2018gaussian}. In fact, the same argument applies more generally to any sequence $X_1,\ldots,X_n$ of i.i.d.\ random variables such that $X_j-\widetilde X_j$ is a Gaussian  mixture, for example when $X_j\sim \Gamma(\gamma)$ with $\gamma>0$.

Assume now in addition that
$$
\sum_{j=1}^n a_j=0
\qquad\text{and}\qquad
\sum_{j=1}^n a_j^2=1.
$$
Let $G$ denote the density of $\sum_{j=1}^n a_jV_j$. By \eqref{h_2=h_{infty}} and the fact that $\sum_{j=1}^n a_jV_j$ is symmetric and log-concave, its density attains its maximum at $0$, and therefore
$$
h_2\left(\sum_{j=1}^n a_j\mathcal E_j\right)
=
h_\infty\left(\sum_{j=1}^n a_jV_j\right)
=
-\log \|G\|_\infty
=
-\log G(0).
$$
To estimate $G(0)$, we use the Fourier inversion formula:
$$
G(0)=\frac{1}{2\pi}\int_{\mathbb{R}}\prod_{j=1}^{n}\frac{1}{1+a_j^2t^2}\,dt.
$$
Thus, Theorem~\ref{thm:section B_1^n centroid} yields sharp bounds in this setting under the two constraints above.
\end{remark}

We remind the formula~\eqref{sections of B_1^n}
\begin{equation*}
    \operatorname{vol}_{n-1}(B_1^n\cap a^{\perp})
    =
    \frac{2^n}{\pi (n-1)!}
    \int_{0}^{\infty}\prod_{j=1}^n \frac{1}{1+a_j^2 t^2}\,dt.
\end{equation*}
We proceed to show the bound of Theorem~\ref{thm:section B_1^n centroid} pointwise. Let
$$
G_t(a_1,\ldots,a_n)=\prod_{j=1}^n \frac{1}{1+a_j^2t^2}.
$$
Our goal is to show that, on $\mathcal M$, the function $G_t$ is minimized at the half-plus/half-minus vector when $n$ is even, and at the almost half-plus/half-minus vector when $n$ is odd. Equivalently, it suffices to show that $1/G_t$ is maximized at these configurations.

It is well known, (see~\ref{sec:2})
\begin{equation}\label{e_k(T+x)}
\frac{1}{G_t}=\sum_{k=0}^n e_k(a_1^2,\ldots,a_n^2)t^{2k}.
 \end{equation}
Therefore, it is enough to prove that, for each $k\leq n$, the elementary symmetric polynomial $e_k(a_1^2,\ldots,a_n^2)$, attains its maximum at the corresponding extremal configuration, on $\mathcal{M}$.

To this end, fix integers $n\geq 3$ and $3\leq k\leq n$, and define, for $a=(a_1,\dots,a_n)\in\mathbb{R}^n$,
$$
F(a):=e_k(a_1^2,\dots,a_n^2)=\sum_{1\leq i_1<\cdots<i_k\leq n} a_{i_1}^2\cdots a_{i_k}^2.
$$
We shall maximize $F$ over the compact constraint set
$$
\mathcal{M}=\left\{a\in\mathbb{R}^n:\ \sum_{i=1}^n a_i^2=1,\ \sum_{i=1}^n a_i=0\right\}.
$$
\begin{theorem}\label{thm: sharp max e_k}
For every $3\leq k\leq n$, the function
$$
(a_1,\ldots,a_n)\in \mathcal{M}\mapsto e_k(a_1^2,\ldots,a_n^2)
$$
attains its maximum, when $n$ is even, at the half-plus/half-minus vector
$$
\left(
\underbrace{\frac{1}{\sqrt{n}},\ldots,\frac{1}{\sqrt{n}}}_{\frac{n}{2}\text{ times}},
\underbrace{-\frac{1}{\sqrt{n}},\ldots,-\frac{1}{\sqrt{n}}}_{\frac{n}{2}\text{ times}}
\right),
$$
and, when $n$ is odd, at the $\bigl(\frac{n-1}{2},\frac{n+1}{2}\bigr)$-split vector
$$
\left(
\underbrace{\sqrt{\frac{n+1}{n(n-1)}},\ldots,\sqrt{\frac{n+1}{n(n-1)}}}_{\frac{n-1}{2}\text{ times}},
\underbrace{-\sqrt{\frac{n-1}{n(n+1)}},\ldots,-\sqrt{\frac{n-1}{n(n+1)}}}_{\frac{n+1}{2}\text{ times}}
\right).
$$
Moreover, for every $k\geq 3$, the minimum value is equal to $0$, and it is attained at
$$
\left(\frac{1}{\sqrt{2}},-\frac{1}{\sqrt{2}},0,\ldots,0\right).
$$
\end{theorem}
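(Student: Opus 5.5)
The minimum part is immediate, so most of the plan concerns the maximum. Since all $x_j=a_j^2\ge 0$, we have $e_k(a_1^2,\ldots,a_n^2)\ge 0$ on $\mathcal M$. At $\bigl(\tfrac1{\sqrt2},-\tfrac1{\sqrt2},0,\ldots,0\bigr)\in\mathcal M$ only two squared coordinates are nonzero, so every product of $k\ge 3$ of them vanishes and $e_k=0$ there.

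For the maximum I plan a Lagrange multiplier argument that reduces to two-level vectors, followed by a discrete comparison. Write $x=(a_1^2,\ldots,a_n^2)$ and let $x^{(i)}$ denote $x$ with the $i$-th entry removed. Then
\[
\partial_{a_i}F=2a_i\,e_{k-1}(x^{(i)}),\qquad e_{k-1}(x^{(i)})=\sum_{m\ge0}(-1)^m a_i^{2m}e_{k-1-m}(x).
\]
So every coordinate of a critical point is a real root of one fixed odd polynomial equation
\[
a\,\Phi(a^2)=\lambda a+\tfrac{\mu}{2},
\]
whose coefficients depend only on the symmetric functions of $x$.

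\emph{Step 1 (reduction to two levels).} I want to show that at a maximizer all positive coordinates coincide, all negative coordinates coincide, and no coordinate is zero. The root count of the equation above is too crude by itself. I would therefore combine it with the second-order condition, as in the proof of Theorem~\ref{thm:mu-positive-one-negative}. Take two distinct same-sign coordinates $a_p\neq a_q$ and a third coordinate $a_r$. The vector $h$ supported on $\{p,q,r\}$ with $\sum h_j=0$ and $\sum a_jh_j=0$ lies in $T_a\mathcal M$. The goal is to show $D^2F(a)[h,h]-2\lambda\|h\|^2>0$, which contradicts maximality.

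A cleaner alternative, which I would try first, is a two-coordinate smoothing argument. Freeze all coordinates except $a_p,a_q$ of the same sign. Then $a_p+a_q=\sigma$ and $a_p^2+a_q^2=\tau$ are both fixed, so the pair is determined up to swapping. That rigidity means one should instead move three coordinates. Writing
\[
F=A+B(a_p^2+a_q^2)+C\,a_p^2a_q^2+\cdots
\]
in terms of elementary symmetric functions of the other coordinates, one seeks a three-coordinate perturbation along $\mathcal M$ that increases $a_p^2a_q^2$-type terms.

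Zero coordinates would be excluded by a similar perturbation. A zero coordinate contributes nothing to $e_k$, and moving mass into it increases $e_k$ by Schur-concavity (Proposition~\ref{e_k Schur}).

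\emph{Step 2 (two-level comparison).} For a vector with $p$ entries equal to $\alpha>0$ and $q=n-p$ entries equal to $-\beta<0$, the constraints give
\[
\alpha^2=\frac{q}{pn},\qquad \beta^2=\frac{p}{qn},
\]
and hence
\[
e_k=\sum_j\binom pj\binom q{k-j}\alpha^{2j}\beta^{2(k-j)}.
\]
I would show that this quantity increases as $p$ moves toward $\lfloor n/2\rfloor$ (or $\lceil n/2\rceil$), by comparing the values at $p$ and $p+1$ term by term, possibly via a log-concavity or Vandermonde-type inequality. This yields the balanced vector for $n$ even. For $n$ odd it yields the stated $\bigl(\frac{n-1}2,\frac{n+1}2\bigr)$ split; the sign of the vector is irrelevant because $e_k$ depends only on the squares.

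The main obstacle is Step 1, namely excluding three or more distinct levels at a maximizer. This is where the second-variation computation with $e_{k-2}(x^{(p,q)})$-type Hessian entries has to be controlled uniformly in $k\ge3$.
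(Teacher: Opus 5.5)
\textbf{Genuine gap: Step~1 is announced but not carried out.} Your overall architecture is the paper's: Lagrange multipliers plus a second-order test on a three-coordinate tangent vector to force a two-level structure, followed by a comparison of two-level configurations. Your treatment of the minimum is fine. But Step~1, which you yourself flag as the main obstacle, is where the whole proof lives, and the idea that makes it work is missing. Your three-coordinate smoothing does not bypass it either: to second order it is the same second variation.

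The argument goes as follows.
\begin{enumerate}
\item Take the third coordinate of the \emph{opposite} sign (one exists since $\sum a_j=0$), say $a_i=-a<0<a_j=b\neq a_k=c$. The tangent direction is then forced to be $h=(b-c,\,a+c,\,-(a+b))$ on $\{i,j,k\}$.
\item Uniformity in $k$ comes from setting $T_0=e_{k-2}$ and $U_0=e_{k-3}$ of the squares of the remaining coordinates. Then $T_{ij}=T_0+c^2U_0$, $T_{ik}=T_0+b^2U_0$, $T_{jk}=T_0+a^2U_0$, and $Q(h)$ is linear in $(T_0,U_0)$.
\item By itself this linear form has no definite sign. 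The decisive extra input is first-order. From $E_i-E_j=(a_j^2-a_i^2)T_{ij}$ and the Lagrange equations one gets $\mu=2a_ia_j(a_i+a_j)T_{ij}$ whenever $a_i\neq a_j$. Equating this for the pairs $(i,j)$ and $(i,k)$ gives $(a-b-c)T_0=abc\,U_0$.
\item Eliminating $T_0$ collapses everything to $Q(h)=2U_0(a+b)^2(a+c)^2(b-c)^2>0$. Here $U_0>0$ because a maximizer has at least $k$ nonzero coordinates, and this is exactly where $k\ge3$ is used.
\end{enumerate}

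\textbf{The two auxiliary steps also need repair.}

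\emph{Excluding zeros.} Schur-concavity does not do this. Because of the constraint $\sum a_j=0$, the squared vectors of points of $\mathcal M$ are not closed under transfers. For example, $(\tfrac12,\tfrac12,0)\succ(\tfrac13,\tfrac13,\tfrac13)$, yet the latter is $a^2$ for no $a\in\mathcal M$ when $n=3$. Concretely, shifting mass $t$ into a zero slot changes $\sum a_j$ by order $\sqrt t$. The paper does not exclude zeros; it allows $\gamma_3\ge0$ of them and compares support sizes by majorization. If you want a direct exclusion, it is available:
\begin{enumerate}
\item $\mathcal M$ is a smooth $(n-2)$-sphere, so the Lagrange equation also holds at a coordinate $a_m=0$, and it forces $\mu=0$.
\item Then $E_i=e_{k-1}(x^{(i)})=\lambda$ on the support $I$, with $|I|=N\ge k$. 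Since $T_{ij}>0$, all nonzero $a_i^2$ equal $1/N$.
\item Choosing $a_p=-a_q$, the tangent vector $e_m-\tfrac12(e_p+e_q)$ gives $Q=2N^{1-k}\binom{N-2}{k-3}>0$.
\end{enumerate}

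\emph{Step~2.} A term-by-term comparison fails. For $n=9$, $k=3$, the term $\binom p3\alpha^6$ drops from $8/729$ at $p=3$ to $500/46656$ at $p=4$. What you need is majorization. Write $u_p$ for the squared two-level vector with $p$ positive entries. Then $u_{p+1}\prec u_p$ for $1\le p\le\frac{n-3}{2}$, and trivially $u_p\succ(\frac1n,\ldots,\frac1n)$ for $n$ even. Schur-concavity of $e_k$ on $\mathbb R^n_+$ then gives the monotonicity, legitimately this time, since both vectors being compared are feasible.
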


\begin{remark}
For $k=1$, one trivially has $F(a)=1$. For $k=2$, the classical Newton identity yields
\begin{equation}
e_2(a_1^2,\ldots,a_n^2)=\frac{1-p_4(a_1,\ldots,a_n)}{2},
\end{equation}
where
$$
p_m(a)=\sum_{j=1}^n a_j^m
$$
denotes the $m$-th power sum.

In the recent paper \cite{brazitikos2025sharp}, it was shown that $p_4$ on $\mathcal{M}$ is maximized at
$$
a=\left(-\sqrt{\frac{n-1}{n}},\frac{1}{\sqrt{n(n-1)}},\ldots,\frac{1}{\sqrt{n(n-1)}}\right).
$$
and it is minimized at the desired vector. 

This phenomenon also appears in other settings; see, for instance, \cite{rivin2002counting,pandis2026extremal} and quite recently in \cite{holevo2026conjecture,zhang2026proof}. In particular, this suggests that the minimizer of \(e_2(a_1^2,\ldots,a_n^2)\) under the same
constraints is different from $k\geq 3$. Indeed, for $k\geq 3$,
the minimum of $e_k(a^2)$ is attained at  zero.
Consequently, the desired lower estimate for the section cannot be obtained
solely from a pointwise comparison of the elementary symmetric polynomials.
\end{remark}
\begin{theorem}\label{thm:twolevel}
Let $3\le k\le n$. Every global maximizer $a^\star\in\mathcal{M}$ of $F$ has exactly two nonzero values:
there exist $\gamma_1,\gamma_2\ge 1$ with $\gamma_1+\gamma_2+\gamma_3=n$ and numbers $\alpha>0$, $\beta>0$ such that, after permuting coordinates,
\[
a^\star=(\underbrace{\alpha,\dots,\alpha}_{\gamma_1\text{ times}},\underbrace{-\beta,\dots,-\beta}_{\gamma_2\text{ times}},\underbrace{0,\ldots,0}_{\gamma_3 \,\text{times}}).
\]

\end{theorem}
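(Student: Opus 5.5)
The plan is to exploit the fact that, once all but a few coordinates are frozen, $F$ depends on the free coordinates only through a small number of symmetric quantities. On that reduced set $F$ becomes a convex function of a single parameter.

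\textbf{Existence and reduction.} Since $\mathcal M$ is compact, a global maximizer $a^\star$ exists. Because $\sum a_i=0$ and $\sum a_i^2=1$, both signs occur among its coordinates. By the symmetry $a\mapsto -a$, which preserves $\mathcal M$ and $F$, it suffices to prove the following: \emph{all strictly positive coordinates of $a^\star$ are equal}. Applying the same statement to $-a^\star$ then gives that all strictly negative coordinates are equal, which is exactly the claimed form with $\gamma_1,\gamma_2\ge1$.

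\textbf{Three-coordinate reduction.} Take three positive coordinates $a_i,a_j,a_l$ of $a^\star$ and freeze the remaining ones, writing $x=a^2$. Expanding $e_k$ in the three free variables gives
\[
F=A+B\,e_1(x_i,x_j,x_l)+C\,e_2(x_i,x_j,x_l)+D\,e_3(x_i,x_j,x_l),
\]
where $A,B,C,D\ge0$ are elementary symmetric polynomials of the frozen $x$'s. Moving inside $\mathcal M$ while keeping the other coordinates fixed forces $p_1=a_i+a_j+a_l=s$ and $p_2=q$ to stay constant. Hence $e_1(x)=q$ is constant, and the only free parameter is $t:=a_ia_ja_l$. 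In terms of $t$,
\[
e_3(x)=t^2,\qquad e_2(x)=e_2(a)^2-2s\,t,
\]
and $e_2(a)=(s^2-q)/2$ is fixed. So $F=\mathrm{const}-2Cs\,t+D\,t^2$, which is convex in $t$. On the admissible $t$-interval (the cubic with roots $a_i,a_j,a_l$ must keep three nonnegative real roots) the maximum is therefore attained at an endpoint. The endpoints are the configurations where two of the three values coincide or one of them vanishes. If $D>0$, strict convexity forces every maximizer to sit at such an endpoint. Consequently the positive part of $a^\star$ takes at most two distinct values, and any positive triple must have the shape $(u,u,v)$ or $(u,v,v)$.

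\textbf{Main obstacle: ruling out two positive levels.} The remaining case is that the positive coordinates take two values $u>v>0$ with multiplicities $m_u,m_v$. Here I would argue as in Step 2 of the proof of Theorem~\ref{thm:mu-positive-one-negative}, using the Lagrange system
\[
a_i\,e_{k-1}(\hat x_i)=\lambda a_i+\mu/2 .
\]
I would choose a tangent vector $h$ supported on one $u$-coordinate and two $v$-coordinates, with $\sum h=0$ and $\langle a,h\rangle=0$. Then I would compute the constrained second variation using the explicit quadratic $F(t)$ above, expanded around the endpoint. The goal is to show that the endpoint in question is the \emph{minimum} endpoint of $t$ rather than the maximum one: by the standard discriminant analysis, for fixed $p_1,p_2$ the product $abc$ is minimal when the two largest values coincide and maximal when the two smallest coincide. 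Then I compare the values of $-2Cs\,t+D\,t^2$ at the two ends of the $t$-interval and show that the other endpoint is strictly better, or that moving toward a zero coordinate increases $F$.

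The delicate point is the sign of $-2Cs+D(t_{\min}+t_{\max})$, since both terms matter. I expect to need $k\ge3$ here, because it guarantees $D>0$ and supplies a relation between $C$ and $D$ via Newton-type inequalities. If the direct comparison fails, the fallback is an iterative argument: repeatedly merge levels by applying the triple move to $(u,v,v)$-type triples, and check that $F$ does not decrease until the positive coordinates become equal.
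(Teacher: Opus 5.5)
\textbf{There is a genuine gap: the step that excludes two positive levels is not proved.}

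Your three-coordinate reduction is sound as far as it goes. For three positive coordinates it shows that the positive part of $a^\star$ takes at most two values. You should justify $D=e_{k-3}(\text{frozen})>0$ by noting that $F(a^\star)>0$ forces at least $k$ nonzero coordinates; $k\ge3$ alone does not give it.

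The theorem, however, needs you to exclude two positive levels $u>v>0$. For that step you offer several strategies and leave the decisive sign open. With same-sign triples it cannot be settled as you set it up:
\begin{itemize}
\item The only available triples, $(u,u,v)$ and $(u,v,v)$, sit at endpoints of the $t$-range, so $dt/d\theta=0$ there.
\item A tangent vector supported on one $u$-coordinate and two $v$-coordinates is forced to vanish on the $u$-coordinate, i.e.\ $h=e_p-e_q$ inside the $v$-block.
\item The second variation then reduces to $\phi'(t)\,t''(\theta)$, where $\phi(t)=\mathrm{const}-2Cs\,t+Dt^2$. Its sign is exactly the quantity $2Dt-2Cs$ that you could not control.
\item The $(u,u,v)$- and $(u,v,v)$-triples have different frozen sets, hence different $C,D,s$, so they cannot be played against each other.
\end{itemize}

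\textbf{The missing idea is to use a triple of mixed sign,} e.g.\ $(u,v,-w)$ with $w>0$; this is what the paper does.
\begin{itemize}
\item Your identities $e_1(x)=q$, $e_2(x)=e_2(a)^2-2st$, $e_3(x)=t^2$ do not use positivity.
\item Since $a^\star$ maximizes over all of $\mathcal M$, you need not keep the roots nonnegative. The triple may move over the whole circle $\{y\in\mathbb R^3:\sum y_i=s,\ \sum y_i^2=q\}$.
\item Writing $y_i=m+r\cos(\theta+2\pi i/3)$ gives $t=m^3-\tfrac34 mr^2+\tfrac{r^3}{4}\cos 3\theta$. So the endpoints of the $t$-range are exactly the triples with a repeated entry.
\item If $u,v,-w$ are pairwise distinct, then $t^\star$ is interior, and strict convexity of $\phi$ produces a point of $\mathcal M$ with larger $F$. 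This contradiction finishes the proof.
\item Applied to $(\alpha,-\beta,0)$, the same argument even gives $\gamma_3=0$.
\end{itemize}

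The paper reaches the same contradiction through the Lagrangian second variation at a triple $(-a,b,c)$ with $b\neq c$. Its vector $h=(b-c,a+c,-(a+b))$ is the tangent to your circle. Its first-order relation $(b+c-a)T_0+abc\,U_0=0$ is precisely $\phi'(t^\star)=0$. Its identity $Q(h)=2U_0(a+b)^2(a+c)^2(b-c)^2$ is $\phi''(t)\,(dt/d\theta)^2$, with $\phi''=2D=2U_0$ and $dt/d\theta=(a+b)(a+c)(b-c)$. So the corrected version of your argument is a cleaner, global form of the paper's computation and avoids its polynomial expansion entirely.
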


Write $x_i:=a_i^2\ge 0$ and, for each $i$, define
\[
E_i:=\ek_{\,k-1}(x_1,\dots,\widehat{x_i},\dots,x_n),
\qquad
T_{ij}:=\ek_{\,k-2}(x_1,\dots,\widehat{x_i},\dots,\widehat{x_j},\dots,x_n)\quad(i\neq j).
\]
(Here $\widehat{\cdot}$ means that entry is omitted.)

\begin{lemma}\label{lem:derivatives}
For $F(a)=\ek(a_1^2,\dots,a_n^2)$ we have, for every $i$,
\[
\frac{\partial F}{\partial a_i}(a)=2a_i E_i.
\]
Moreover, the Hessian satisfies
\[
\frac{\partial^2F}{\partial a_i^2}(a)=2E_i,\qquad
\frac{\partial^2F}{\partial a_i\partial a_j}(a)=4a_i a_j\,T_{ij}\quad (i\neq j).
\]
\end{lemma}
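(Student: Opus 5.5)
The plan is a direct chain-rule computation. I would treat $F$ as the composition of $G(x):=e_k(x_1,\dots,x_n)$ with the map $a\mapsto (a_1^2,\dots,a_n^2)$. The key elementary fact is that $e_k$ is affine in each variable separately. Isolating $x_i$ gives
\[
e_k(x_1,\dots,x_n)=x_i\,e_{k-1}(x_1,\dots,\widehat{x_i},\dots,x_n)+e_k(x_1,\dots,\widehat{x_i},\dots,x_n),
\]
which one can read off the generating function $\prod_j(1+x_jt)=(1+x_it)\prod_{j\ne i}(1+x_jt)$ by comparing coefficients of $t^k$. Hence $\partial G/\partial x_i=E_i$ and $\partial^2 G/\partial x_i^2=0$.

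Applying the same splitting to $E_i$ with respect to $x_j$ ($j\neq i$) gives
\[
E_i=x_j\,T_{ij}+e_{k-1}(x_1,\dots,\widehat{x_i},\dots,\widehat{x_j},\dots,x_n).
\]
Therefore $\partial^2 G/\partial x_i\partial x_j=T_{ij}$.

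Now I apply the chain rule with $x_i=a_i^2$ and $\partial x_i/\partial a_i=2a_i$. The first derivative is $\partial F/\partial a_i=2a_iE_i$.

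Differentiating again in $a_i$, the product rule gives $2E_i+2a_i\,\partial_{a_i}E_i$. Since $E_i$ does not depend on $x_i$, the second term vanishes, so $\partial^2F/\partial a_i^2=2E_i$.

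For $j\ne i$, differentiating $2a_iE_i$ in $a_j$ gives $2a_i\cdot T_{ij}\cdot 2a_j=4a_ia_jT_{ij}$.

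I would add the convention that $e_0=1$ and that $e_m=0$ when $m<0$ or $m$ exceeds the number of variables. This keeps the formulas valid in edge cases such as $k=n$, where $T_{ij}=e_{n-2}$ of $n-2$ variables. There is no real obstacle. The only point needing care is justifying the one-variable splitting identity, and the generating-function comparison handles it cleanly.
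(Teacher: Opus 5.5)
Your proposal is correct and is essentially the paper's argument: the paper also differentiates directly, using that each monomial of $F$ contains $a_i^2$ at most once. That is precisely the multilinearity you encode in your splitting identity $e_k=x_i\,e_{k-1}(\widehat{x_i})+e_k(\widehat{x_i})$, and it gives $2a_iE_i$, $2E_i$ and $4a_ia_jT_{ij}$ in the same way.
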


\begin{proof}
Each monomial in $F$ contains $a_i^2$ either $0$ times or $1$ time. Differentiating gives
\[
\frac{\partial F}{\partial a_i}= \sum_{\substack{I\subset\{1,\dots,n\}\\ |I|=k,\ i\in I}} 2a_i\prod_{\ell\in I\setminus\{i\}} a_\ell^2
=2a_i\,\ek_{k-1}(x_{\widehat{i}})=2a_i E_i.
\]
Differentiating again yields $\partial_{ii}^2F=2E_i$, and for $i\neq j$,
\[
\partial_{ij}^2F=\frac{\partial}{\partial a_j}(2a_iE_i)
=2a_i\cdot \frac{\partial}{\partial a_j}\ek_{k-1}(x_{\widehat{i}})
=2a_i\cdot 2a_j\,\ek_{k-2}(x_{\widehat{i},\widehat{j}})
=4a_i a_j T_{ij}.
\]
\end{proof}

\begin{lemma}\label{lem:diff}
For $i\neq j$,
\[
E_i-E_j=(x_j-x_i)\,T_{ij}=(a_j^2-a_i^2)\,T_{ij}.
\]
\end{lemma}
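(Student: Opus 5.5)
The identity is purely algebraic, so the plan is to isolate the variables \(x_i\) and \(x_j\) in \(e_{k-1}\) and subtract. Write \(y\) for the vector \((x_\ell)_{\ell\neq i,j}\) with both entries \(x_i,x_j\) removed, so that \(T_{ij}=e_{k-2}(y)\). Set also \(R_{ij}:=e_{k-1}(y)\).

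\textbf{Step 1: expand \(E_i\) and \(E_j\).} The \((k-1)\)-subsets of indices avoiding \(i\) split according to whether they contain \(j\). Those containing \(j\) contribute \(x_j\,e_{k-2}(y)\), and those avoiding \(j\) contribute \(e_{k-1}(y)\). Hence
\[
E_i=x_j\,T_{ij}+R_{ij}.
\]
By the same splitting with the roles of \(i\) and \(j\) exchanged,
\[
E_j=x_i\,T_{ij}+R_{ij}.
\]
This is the standard recursion \(e_m(z,w)=w\,e_{m-1}(z)+e_m(z)\) for adjoining one variable \(w\). It can also be read off from the generating function \(\prod_\ell(1+x_\ell t)\) recalled in the preliminaries, by factoring out \((1+x_jt)\).

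\textbf{Step 2: subtract.} The term \(R_{ij}\) is common to both expressions and cancels. This gives \(E_i-E_j=(x_j-x_i)T_{ij}\), and substituting \(x_\ell=a_\ell^2\) yields the second form.

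\textbf{Degenerate cases.} For \(k=1\) we have \(T_{ij}=e_{-1}=0\) and \(E_i=E_j=1\). If \(k-1>n-2\), the convention \(e_m=0\) for \(m>\) (number of variables) keeps the identity valid. There is no real obstacle here; the only care needed is bookkeeping of these index conventions.
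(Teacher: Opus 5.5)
Your proposal is correct and follows essentially the same route as the paper: both split the $(k-1)$-subsets according to whether they contain $i$ or $j$, cancel the common part $e_{k-1}(x_{\widehat{i},\widehat{j}})$, and read off $(x_j-x_i)\,e_{k-2}(x_{\widehat{i},\widehat{j}})$. Your version simply writes that common part out explicitly as $R_{ij}$.
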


\begin{proof}
Expand $E_i=\ek_{k-1}(x_{\widehat{i}})$ and $E_j=\ek_{k-1}(x_{\widehat{j}})$ as sums over $(k-1)$-subsets.
Terms that avoid both $i$ and $j$ cancel in the difference. The remaining terms are exactly those that use $x_j$
but not $x_i$ (in $E_i$) minus those that use $x_i$ but not $x_j$ (in $E_j$), hence
\[
E_i-E_j=(x_j-x_i)\,\ek_{k-2}(x_{\widehat{i},\widehat{j}})=(x_j-x_i)\,T_{ij}.
\]
\end{proof}

Let $a^\star\in\mathcal{M}$ be a global maximizer. Let $I:=\{i:\ a_i^\star\neq 0\}$ be the support.
We may apply Lagrange multipliers on the smooth manifold obtained by freezing the zero coordinates (if any),
so the following relations hold for all $i\in I$.

\begin{lemma}\label{lem:lagrange}
There exist real numbers $\lambda,\mu$ such that for every $i\in I$,
\begin{equation}\label{eq:star}
2a_iE_i=\mu+2\lambda a_i.
\end{equation}
Equivalently,
\begin{equation}\label{eq:Ei-lam}
E_i-\lambda=\frac{\mu}{2a_i}\qquad (i\in I).
\end{equation}
\end{lemma}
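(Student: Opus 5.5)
The plan is to apply the classical Lagrange multiplier theorem on the smooth stratum where the zero coordinates are frozen. Let $I$ be the support of $a^\star$ and consider the restriction of $F$ to the set
\[
\mathcal M_I:=\Bigl\{a\in\mathbb R^n:\ a_j=0\ (j\notin I),\ \sum_{i\in I}a_i^2=1,\ \sum_{i\in I}a_i=0\Bigr\}.
\]
Since $a^\star$ is a global maximizer of $F$ on $\mathcal M$ and $\mathcal M_I\subseteq\mathcal M$, the point $a^\star$ is in particular a local maximizer of $F|_{\mathcal M_I}$.

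To use multipliers I must check that the constraint gradients, restricted to the coordinates in $I$, are linearly independent at $a^\star$. These gradients are $(2a_i)_{i\in I}$ and $(1)_{i\in I}$. They are dependent only if $a_i$ is constant on $I$, say $a_i=c$. Since $\sum_{i\in I}a_i^2=1$, we have $c\neq0$, and then $\sum_{i\in I}a_i=|I|c\neq0$, contradicting the second constraint. Hence the regularity condition holds, and $|I|\ge2$ automatically.

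By Lemma~\ref{lem:derivatives}, $\partial F/\partial a_i=2a_iE_i$. The multiplier rule therefore gives real numbers $\lambda,\mu$ with
\[
2a_iE_i=2\lambda a_i+\mu\qquad(i\in I),
\]
which is \eqref{eq:star}. The point to note is that $E_i$ is evaluated at the full vector $x=(a_1^2,\dots,a_n^2)$, including the zero entries. This agrees with differentiating the restricted function, because only partial derivatives in directions $i\in I$ are involved. Finally, since $a_i\neq0$ for $i\in I$, dividing by $2a_i$ yields \eqref{eq:Ei-lam}.

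There is no substantive obstacle; the only step needing care is the regularity check, which uses the two constraints exactly as above.
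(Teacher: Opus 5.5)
Your proposal is correct and takes the same route as the paper: freeze the zero coordinates, apply the multiplier rule on the resulting stratum, insert $\partial F/\partial a_i=2a_iE_i$ from Lemma~\ref{lem:derivatives}, and divide by $2a_i\neq 0$. The one addition is your explicit check of the constraint qualification, which the paper leaves implicit: the gradients $(2a_i)_{i\in I}$ and $(1)_{i\in I}$ are independent because $a$ cannot be constant on its support.
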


\begin{proof}
Consider the Lagrangian
\[
\mathcal{L}(a)=F(a)-\lambda\Big(\sum_{r=1}^n a_r^2-1\Big)-\mu\Big(\sum_{r=1}^n a_r\Big).
\]
At a constrained critical point we have $\nabla F = 2\lambda a + \mu\mathbf{1}$ on the active coordinates.
Using Lemma~\ref{lem:derivatives} gives \eqref{eq:star}. If $a_i\neq 0$, divide by $2a_i$ to obtain \eqref{eq:Ei-lam}.
\end{proof}

\begin{lemma}\label{lem:pairmu}
Fix distinct $i,j\in I$ with $a_i\neq a_j$. Then
\begin{equation}\label{eq:mu-pair}
\mu=2a_ia_j(a_i+a_j)\,T_{ij}.
\end{equation}
\end{lemma}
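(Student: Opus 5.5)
We plan to obtain \eqref{eq:mu-pair} directly from the Lagrange relations of Lemma~\ref{lem:lagrange}, combined with the difference identity of Lemma~\ref{lem:diff}. Writing \eqref{eq:star} for the indices $i$ and $j$ gives
\[
2a_iE_i=\mu+2\lambda a_i,\qquad 2a_jE_j=\mu+2\lambda a_j .
\]
The idea is to eliminate $\lambda$ between these two equations. Multiplying the first by $a_j$, the second by $a_i$, and subtracting, we get
\[
2a_ia_j(E_i-E_j)=\mu(a_j-a_i).
\]

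Next we substitute Lemma~\ref{lem:diff}, namely $E_i-E_j=(a_j^2-a_i^2)T_{ij}=(a_j-a_i)(a_j+a_i)T_{ij}$. This turns the left-hand side into
\[
2a_ia_j(a_j-a_i)(a_i+a_j)T_{ij}.
\]
Since $a_i\neq a_j$ by hypothesis, we may cancel the common factor $a_j-a_i$, which yields exactly $\mu=2a_ia_j(a_i+a_j)T_{ij}$.

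The only points needing care are bookkeeping. We must use \eqref{eq:star} rather than \eqref{eq:Ei-lam}, so that no division by $a_i$ is required; in any case $i,j\in I$ ensures $a_i,a_j\neq0$. We must also make sure the Lagrange relations are valid on the support stratum where the zero coordinates are frozen, which Lemma~\ref{lem:lagrange} already provides. Note that the case $a_i=-a_j$ is covered as well: then both sides vanish, giving $\mu=0$, consistent with the cancellation. There is no genuine obstacle; the key step is simply choosing the elimination of $\lambda$ that produces the factor $a_j-a_i$.
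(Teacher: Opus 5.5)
Your proposal is correct and is essentially the paper's own proof: you eliminate $\lambda$ by multiplying the two Lagrange relations by $a_j$ and $a_i$ and subtracting, then substitute Lemma~\ref{lem:diff} and cancel the nonzero factor $a_j-a_i$. Your side remarks (working with \eqref{eq:star} rather than \eqref{eq:Ei-lam}, and $\mu=0$ when $a_i=-a_j$) are accurate.
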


\begin{proof}
Write \eqref{eq:star} for $i$ and $j$:
\[
2a_iE_i=\mu+2\lambda a_i,\qquad 2a_jE_j=\mu+2\lambda a_j.
\]
Multiply the first by $a_j$, the second by $a_i$, and subtract:
\[
2a_ia_j(E_i-E_j)=\mu(a_j-a_i).
\]
By Lemma~\ref{lem:diff}, $E_i-E_j=(a_j^2-a_i^2)T_{ij}=(a_j-a_i)(a_j+a_i)T_{ij}$.
Cancel the factor $(a_j-a_i)\neq 0$ to obtain \eqref{eq:mu-pair}.
\end{proof}

Let $a^\star$ be a global maximizer and let $\lambda,\mu$ be as in Lemma~\ref{lem:lagrange}.
The second-order necessary condition says that the Hessian of the Lagrangian is negative semidefinite
on the tangent space to $\mathcal{M}$ at $a^\star$.

Concretely, for any tangent vector $h\in\mathbb{R}^n$ with
\[
\sum_{r=1}^n h_r=0,\qquad \sum_{r=1}^n a_r h_r=0,
\]
we must have
\[
Q(h):=h^\top\big(\nabla^2F(a^\star)-2\lambda I\big)h\le 0,
\]
since the constraint $\sum a_r^2=1$ contributes $-2\lambda I$ to the Hessian, and the linear constraint $\sum a_r=0$
contributes nothing to the Hessian.

\begin{lemma}\label{lem:Qmu}
For any $h$ supported on indices in $I$,
\begin{equation}\label{eq:Qmu}
Q(h)=\mu\sum_{r\in I}\frac{h_r^2}{a_r}\;+\;8\sum_{\substack{i<j\\ i,j\in I}} a_ia_j\,T_{ij}\,h_i h_j.
\end{equation}
\end{lemma}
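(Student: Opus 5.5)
The plan is to expand $Q(h)=h^\top\nabla^2F(a^\star)h-2\lambda\|h\|^2$ using Lemma~\ref{lem:derivatives}, and then eliminate the diagonal part with the Lagrange relation of Lemma~\ref{lem:lagrange}. Since $h$ is supported on $I$, only indices in $I$ contribute. By Lemma~\ref{lem:derivatives}, $\partial^2_{ii}F=2E_i$ and $\partial^2_{ij}F=4a_ia_jT_{ij}$ for $i\neq j$. Hence
\[
h^\top\nabla^2F(a^\star)h=\sum_{r\in I}2E_r h_r^2+\sum_{\substack{i\neq j\\ i,j\in I}}4a_ia_jT_{ij}h_ih_j .
\]
The off-diagonal sum over ordered pairs is twice the sum over unordered pairs $i<j$, which produces the coefficient $8$ in \eqref{eq:Qmu}.

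For the diagonal part, subtract $2\lambda\|h\|^2=\sum_{r\in I}2\lambda h_r^2$ (again using that $h$ is supported on $I$). This gives $\sum_{r\in I}2(E_r-\lambda)h_r^2$. Now invoke \eqref{eq:Ei-lam}, which holds precisely because $a_r\neq0$ for $r\in I$: $2(E_r-\lambda)=\mu/a_r$. Substituting yields the term $\mu\sum_{r\in I}h_r^2/a_r$, and adding the off-diagonal contribution gives \eqref{eq:Qmu}.

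There is no real obstacle; this is bookkeeping. The only points to watch are two. First, the factor $2$ from symmetrizing the off-diagonal sum, so that ordered pairs give $4$ and unordered pairs give $8$. Second, the restriction to the support $I$: the identity \eqref{eq:Ei-lam} is valid only for $a_r\neq 0$, which is why the lemma is stated only for $h$ supported in $I$. That is also consistent with freezing the zero coordinates, as done in Lemma~\ref{lem:lagrange}.
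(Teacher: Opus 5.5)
Your proposal is correct and matches the paper's own proof: you expand $h^\top(\nabla^2F-2\lambda I)h$ via Lemma~\ref{lem:derivatives}, symmetrize the ordered off-diagonal sum to obtain the factor $8$, and on the support $I$ replace $2(E_r-\lambda)$ by $\mu/a_r$ using \eqref{eq:Ei-lam}.
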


\begin{proof}
By Lemma~\ref{lem:derivatives},
\[
h^\top(\nabla^2F-2\lambda I)h=\sum_{r\in I} (2E_r-2\lambda)h_r^2+\sum_{\substack{i\neq j\\ i,j\in I}} 4a_i a_jT_{ij}h_i h_j.
\]
Rewrite the off-diagonal sum as $8\sum_{i<j}a_ia_jT_{ij}h_i h_j$.
Using \eqref{eq:Ei-lam}, we have $2(E_r-\lambda)=\mu/a_r$ for $r\in I$, hence the diagonal sum becomes
$\mu\sum_{r\in I}h_r^2/a_r$. This gives \eqref{eq:Qmu}.
\end{proof}

Now we exclude a third level. Assume, for contradiction, that $a^\star$ has \emph{three} distinct nonzero values,
with exactly one negative value and two distinct positive values. That is, there exist indices $i,j,k\in I$
and numbers $a,b,c>0$ with $b\neq c$ such that
\[
a_i=-a,\qquad a_j=b,\qquad a_k=c.
\]
Let $S:=\{1,\dots,n\}\setminus\{i,j,k\}$ and define
\[
T_0:=e_{k-2}(a_\ell^2:\ \ell\in S),\qquad
U_0:=e_{k-3}(a_\ell^2:\ \ell\in S).
\]
(For $k=3$, $U_0=\ek_0=1$.)

\begin{lemma}\label{lem:Treduce}
With the above notation,
\[
T_{ij}=T_0+c^2U_0,\qquad T_{ik}=T_0+b^2U_0,\qquad T_{jk}=T_0+a^2U_0.
\]
\end{lemma}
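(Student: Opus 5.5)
The plan is to prove all three identities with the same decomposition used in the proof of Lemma~\ref{lem:diff}: split a sum over subsets according to which of the distinguished indices $i,j,k$ it contains. Recall that $T_{ij}=e_{k-2}(x_1,\dots,\widehat{x_i},\dots,\widehat{x_j},\dots,x_n)$ with $x_\ell=a_\ell^2$. Here $k$ also names an index, which is a slight clash with the degree $k$; context keeps them apart. Once $i$ and $j$ are removed, the remaining variables are exactly $\{x_\ell:\ell\in S\}\cup\{x_k\}$, and $x_k=c^2$.

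The key step is the standard recursion for elementary symmetric polynomials under adjoining one variable,
\[
e_m(y_1,\dots,y_p,z)=e_m(y_1,\dots,y_p)+z\,e_{m-1}(y_1,\dots,y_p),
\]
which follows either from the generating function $\prod(1+y_\ell t)(1+zt)=\sum_m e_m t^m$ recalled in Section~\ref{sec:2}, or by splitting the $m$-subsets into those that avoid $z$ and those that contain it. Applying this with $m=k-2$, $z=x_k=c^2$ and $\{y_\ell\}=\{x_\ell:\ell\in S\}$ gives
\[
T_{ij}=e_{k-2}(x_S)+c^2e_{k-3}(x_S)=T_0+c^2U_0.
\]

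By symmetry, removing $i,k$ leaves $S\cup\{j\}$ with $x_j=b^2$, so $T_{ik}=T_0+b^2U_0$. Removing $j,k$ leaves $S\cup\{i\}$ with $x_i=(-a)^2=a^2$, so $T_{jk}=T_0+a^2U_0$. The sign of $a_i$ does not matter because only the squares enter.

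No step is a real obstacle; the only care needed is at the edge cases. For $k=3$ we have $e_{k-3}=e_0=1$, consistent with the convention $U_0=1$. If $|S|<k-2$, the convention $e_m=0$ for $m>|S|$ keeps the recursion valid: for instance, when $k=n$ we have $|S|=n-3=k-3$, so $T_0=0$ and $U_0=\prod_{\ell\in S}x_\ell$.
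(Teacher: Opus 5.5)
Your proof is correct and matches the paper's argument, which likewise just applies the one-variable recursion $e_r(S\cup\{x\})=e_r(S)+x\,e_{r-1}(S)$ with $r=k-2$ and $x\in\{a^2,b^2,c^2\}$. Your check of the edge cases ($k=3$, and $|S|<k-2$ via the convention $e_m=0$ for $m>|S|$) adds care the paper leaves implicit.
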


\begin{proof}
This is the standard recurrence $\ek_r(S\cup\{x\})=\ek_r(S)+x\,\ek_{r-1}(S)$, that follows directly from~\eqref{e_k(T+x)}, applied with $r=k-2$
and $x\in\{a^2,b^2,c^2\}$.
\end{proof}

\begin{lemma}\label{lem:R}
If $b\neq c$, then
\begin{equation}\label{eq:R}
(b+c-a)\,T_0+abc\,U_0=0.
\end{equation}
Equivalently, $(a-b-c)\,T_0=abc\,U_0$.
\end{lemma}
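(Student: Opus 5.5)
The plan is to apply Lemma~\ref{lem:pairmu} to the two pairs $(i,j)$ and $(i,k)$ and then eliminate $\mu$. Since $a_i=-a<0$ and $a_j=b>0$, we have $a_i\neq a_j$, and similarly $a_i\neq a_k$. Both pairs therefore satisfy the hypothesis of Lemma~\ref{lem:pairmu}, giving
\[
\mu=2(-a)b(b-a)\,T_{ij},\qquad \mu=2(-a)c(c-a)\,T_{ik}.
\]
Equating the two expressions and cancelling the nonzero factor $-2a$ yields
\[
b(b-a)\,T_{ij}=c(c-a)\,T_{ik}.
\]

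Next we substitute Lemma~\ref{lem:Treduce}, namely $T_{ij}=T_0+c^2U_0$ and $T_{ik}=T_0+b^2U_0$. This turns the identity into
\[
b(b-a)(T_0+c^2U_0)-c(c-a)(T_0+b^2U_0)=0.
\]
We then collect the coefficients of $T_0$ and of $U_0$ separately.

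The coefficient of $T_0$ is
\[
b^2-ab-c^2+ac=(b-c)(b+c)-a(b-c)=(b-c)(b+c-a).
\]
The coefficient of $U_0$ is
\[
bc^2(b-a)-cb^2(c-a)=bc\bigl(c(b-a)-b(c-a)\bigr)=bc(ab-ac)=abc\,(b-c).
\]
The identity therefore reads
\[
(b-c)\bigl[(b+c-a)T_0+abc\,U_0\bigr]=0.
\]
Since $b\neq c$ by hypothesis, we can divide by $b-c$, which gives exactly \eqref{eq:R}. Rearranging gives the equivalent form $(a-b-c)T_0=abc\,U_0$.

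The only step needing care is the factorisation of the two coefficients, so I would verify each expansion explicitly. It is also worth noting that the argument never uses the pair $(j,k)$, nor any sign information on $T_0$ or $U_0$; the hypotheses $a,b,c>0$ enter only through $a_i\neq a_j$, $a_i\neq a_k$, and $a\neq 0$. I expect no genuine obstacle.
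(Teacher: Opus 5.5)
Your proposal is correct and follows essentially the same route as the paper: you equate the two expressions for $\mu$ from Lemma~\ref{lem:pairmu} for the pairs $(i,j)$ and $(i,k)$, substitute Lemma~\ref{lem:Treduce}, factor out $(b-c)$, and divide by it. Your explicit expansions of the $T_0$ and $U_0$ coefficients check out and simply spell out the factorization that the paper states in one line.
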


\begin{proof}
Apply Lemma~\ref{lem:pairmu} to the pairs $(i,j)$ and $(i,k)$ and equate the two expressions for $\mu$:
\[
2a_i a_j(a_i+a_j)T_{ij}=2a_i a_k(a_i+a_k)T_{ik}.
\]
Substitute $a_i=-a$, $a_j=b$, $a_k=c$ to get
\[
(-a)b(-a+b)T_{ij}=(-a)c(-a+c)T_{ik},
\]
or
\[
b(b-a)T_{ij}=c(c-a)T_{ik}.
\]
Using Lemma~\ref{lem:Treduce} gives
\[
b(b-a)(T_0+c^2U_0)=c(c-a)(T_0+b^2U_0).
\]
Bring all terms to one side and factor $(b-c)$:
\[
0=(b-c)\Big((b+c-a)T_0+abc\,U_0\Big).
\]
Since $b\neq c$, we obtain \eqref{eq:R}.
\end{proof}

\subsubsection*{Choice of the tangent direction}
Define a vector $h\in\mathbb{R}^n$ supported only on $\{i,j,k\}$ by
\begin{equation}\label{eq:hchoice}
h_i=b-c,\qquad h_j=a+c,\qquad h_k=-(a+b),\qquad h_\ell=0\ (\ell\notin\{i,j,k\}).
\end{equation}
Then
\[
\sum_{\ell=1}^n h_\ell=(b-c)+(a+c)-(a+b)=0,
\]
and
\[
\sum_{\ell=1}^n a_\ell h_\ell=(-a)(b-c)+b(a+c)+c(-(a+b))=0,
\]
so $h$ is tangent to $\mathcal{M}$ at $a^\star$.

We now compute $Q(h)$ from Lemma~\ref{lem:Qmu}. Since $h$ is supported on $\{i,j,k\}$,
\begin{align}\label{eq:Qexpand}
Q(h)
&=\mu\Big(\frac{h_i^2}{a_i}+\frac{h_j^2}{a_j}+\frac{h_k^2}{a_k}\Big)
+8a_i a_jT_{ij}h_i h_j+8a_i a_kT_{ik}h_i h_k+8a_j a_kT_{jk}h_j h_k \nonumber\\
&=\mu\Big(-\frac{(b-c)^2}{a}+\frac{(a+c)^2}{b}+\frac{(a+b)^2}{c}\Big)\nonumber\\
&\hspace{2cm}-8ab\,T_{ij}(b-c)(a+c)+8ac\,T_{ik}(b-c)(a+b)-8bc\,T_{jk}(a+c)(a+b).
\end{align}
Also, by Lemma~\ref{lem:pairmu} for the pair $(i,j)$ and Lemma~\ref{lem:Treduce},
\begin{equation}\label{eq:muexplicit}
\mu=2a_i a_j(a_i+a_j)T_{ij}=2(-a)b(-a+b)(T_0+c^2U_0)=2ab(a-b)(T_0+c^2U_0).
\end{equation}

\medskip\noindent
\textbf{Step 1: $Q$ is linear in $T_0$ and $U_0$.}
From \eqref{eq:Qexpand}, \eqref{eq:muexplicit}, and Lemma~\ref{lem:Treduce}, every occurrence of $T_{ij},T_{ik},T_{jk},\mu$
is affine in $(T_0,U_0)$, hence $Q(h)$ is of the form
\[
Q(h)=A(a,b,c)\,T_0+B(a,b,c)\,U_0
\]
for explicit polynomials $A,B$.

\medskip\noindent
\textbf{Step 2: compute the coefficient of $T_0$ (set $U_0=0$).}
If $U_0=0$, then $T_{ij}=T_{ik}=T_{jk}=T_0$ and $\mu=2ab(a-b)T_0$. Plugging into \eqref{eq:Qexpand} gives
\begin{align*}
Q(h)\big|_{U_0=0}
&=2ab(a-b)T_0\Big(-\frac{(b-c)^2}{a}+\frac{(a+c)^2}{b}+\frac{(a+b)^2}{c}\Big)\\
&\quad-8abT_0(b-c)(a+c)+8acT_0(b-c)(a+b)-8bcT_0(a+c)(a+b).
\end{align*}
Clearing denominators and expanding (a routine but straightforward calculation) yields the explicit polynomial form
\begin{equation}\label{eq:QT0poly}
Q(h)\big|_{U_0=0}
=\frac{2T_0}{c}\Big(
a^4 b + a^3 b^2 - a^2 b^3 - a b^4
+c\cdot \Xi(a,b,c)
\Big),
\end{equation}
where
\begin{align*}
\Xi(a,b,c)=\;&
a^4 - a^3 b + 2a^3 c - 4a^2 b^2 + 2a^2 b c - 3a^2 c^2 \\
&\ - a b^3 - 2a b^2 c - 6a b c^2 + b^4 - 2b^3 c - 3b^2 c^2.
\end{align*}
The expression in \eqref{eq:QT0poly} factors as
\begin{equation}\label{eq:QT0factor}
Q(h)\big|_{U_0=0}
=\frac{2(a+b)^2}{c}\,(a-b-c)\,(ab+ac-bc+3c^2)\,T_0.
\end{equation}
(Indeed, expanding the right-hand side of \eqref{eq:QT0factor} reproduces exactly the polynomial in \eqref{eq:QT0poly}.)

\medskip\noindent
\textbf{Step 3: compute the coefficient of $U_0$ (set $T_0=0$).}
If $T_0=0$, then $T_{ij}=c^2U_0$, $T_{ik}=b^2U_0$, $T_{jk}=a^2U_0$, and $\mu=2ab(a-b)c^2U_0$.
Plugging these into \eqref{eq:Qexpand} and expanding yields
\begin{equation}\label{eq:QU0factor}
Q(h)\big|_{T_0=0}
=2c(a+b)^2\,U_0\cdot R(a,b,c),
\end{equation}
where the polynomial $R$ is
\begin{equation}\label{eq:Rpoly}
R(a,b,c)= -3a^2b + a^2 c + 3ab^2 - 7abc + 2ac^2 + b^2 c - 2bc^2 + c^3.
\end{equation}

\medskip\noindent
\textbf{Step 4: assemble $Q(h)$ and use the first-order relation \eqref{eq:R}.}
By linearity in $(T_0,U_0)$ and \eqref{eq:QT0factor}--\eqref{eq:QU0factor}, we have
\begin{equation}\label{eq:Qlinear}
Q(h)=\frac{2(a+b)^2}{c}\,(a-b-c)\,(ab+ac-bc+3c^2)\,T_0
\;+\;2c(a+b)^2\,R(a,b,c)\,U_0.
\end{equation}
Assuming $b\neq c$, Lemma~\ref{lem:R} gives $(a-b-c)T_0=abc\,U_0$, hence \eqref{eq:Qlinear} becomes
\begin{equation}\label{eq:QafterR}
Q(h)=2(a+b)^2U_0\Big(ab\,(ab+ac-bc+3c^2)+c\,R(a,b,c)\Big).
\end{equation}
Now we compute the bracket explicitly. Using \eqref{eq:Rpoly},
\begin{align*}
ab\,(ab+ac-bc+3c^2)+c\,R(a,b,c)
&=\big(a^2b^2+a^2bc-ab^2c+3abc^2\big)\\
&\quad+\big(-3a^2bc+a^2c^2+3ab^2c-7abc^2+2ac^3+b^2c^2-2bc^3+c^4\big)\\
&=a^2b^2-2a^2bc+a^2c^2-2ab^2c+4abc^2-2ac^3+b^2c^2-2bc^3+c^4\\
&=(a+c)^2(b-c)^2.
\end{align*}
Substituting this into \eqref{eq:QafterR} yields the announced factorization:
\begin{equation}\label{eq:Q2}
Q(h)=2\,U_0\,(a+b)^2\,(a+c)^2\,(b-c)^2.
\end{equation}

\begin{lemma}\label{lem:no3levels}
Let $a^\star\in\mathcal{M}$ be a global maximizer. Then it is impossible that $a^\star$ contains values
$-a<0<b\neq c$.
\end{lemma}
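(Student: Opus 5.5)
We argue by contradiction, using the identity \eqref{eq:Q2} together with the second-order necessary condition. Suppose the global maximizer $a^\star$ contains coordinates $a_i=-a<0$, $a_j=b>0$ and $a_k=c>0$ with $b\neq c$. Then $i,j,k$ lie in the support $I$, so Lemma~\ref{lem:lagrange} and Lemma~\ref{lem:Qmu} apply. Since $a_i\neq a_j$ and $a_i\neq a_k$, Lemma~\ref{lem:pairmu} is available for both pairs $(i,j)$ and $(i,k)$, and hence Lemma~\ref{lem:R} holds. The vector $h$ of \eqref{eq:hchoice} is tangent to $\mathcal M$ and supported in $I$. Because $a^\star$ is a global, hence local, maximum, we must have $Q(h)\le 0$. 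On the other hand, \eqref{eq:Q2} gives
\[
Q(h)=2\,U_0\,(a+b)^2(a+c)^2(b-c)^2 .
\]
Here $(a+b)^2(a+c)^2(b-c)^2>0$, because $a,b,c>0$ and $b\neq c$. So a contradiction follows as soon as $U_0>0$.

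The main point is therefore the positivity of $U_0=e_{k-3}(a_\ell^2:\ell\in S)$, where $S=\{1,\dots,n\}\setminus\{i,j,k\}$.

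\emph{Case $k=3$.} Then $U_0=e_0=1>0$, and we are done.

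\emph{Case $k\ge4$.} Here $U_0>0$ exactly when at least $k-3$ coordinates in $S$ are nonzero, i.e.\ when $|I|\ge k$. To guarantee this, we first compare values: the maximum of $F$ on $\mathcal M$ is strictly positive. Indeed, the half-plus/half-minus vector (or the almost-balanced vector for odd $n$) has all coordinates nonzero, so $e_k>0$ there. Hence a maximizer satisfies $F(a^\star)=e_k(x)>0$, which forces at least $k$ nonzero entries, i.e.\ $|I|\ge k$. Then $S\cap I$ has at least $k-3$ elements, and $U_0\ge\prod$ of the squares of $k-3$ of them, which is $>0$.

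In every case $Q(h)>0$, contradicting $Q(h)\le 0$. This proves the lemma.

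The only subtle issue is justifying the second-order condition on the stratum with frozen zero coordinates. This works because $h$ is supported in $I$, and on the manifold $\{a_\ell=0,\ \ell\notin I\}\cap\mathcal M$ the point $a^\star$ is still a local maximum. That manifold is smooth near $a^\star$: the gradients $\mathbf 1_I$ and $a^\star|_I$ are independent, since $a^\star|_I$ has entries of both signs and so is not parallel to $\mathbf 1_I$.
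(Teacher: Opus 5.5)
Your proposal is correct and follows the paper's proof essentially verbatim. You apply the second-order necessary condition to the tangent vector $h$ of \eqref{eq:hchoice}, use the factorization \eqref{eq:Q2}, and get $U_0>0$ because $F(a^\star)>0$ forces at least $k$ nonzero coordinates; your added check that the frozen-support stratum is smooth near $a^\star$ (as $a^\star|_I$ is not parallel to $\mathbf 1_I$) fills in a detail the paper leaves implicit.
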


\begin{proof}
Assume for contradiction that such $-a<0<b\neq c$ occur among the coordinates of $a^\star$,
with indices $i,j,k$ chosen as above. Consider the tangent vector $h$ from \eqref{eq:hchoice}.
Since $a^\star$ is a local maximizer, the second-order necessary condition gives $Q(h)\le 0$.

On the other hand, because $a^\star$ is a maximizer, we have $F(a^\star)\ge F(\bar a)>0$ for some feasible $\bar a$
(e.g.\ take any nontrivial feasible vector with at least $k$ nonzero entries), hence $F(a^\star)>0$.
Therefore $a^\star$ has at least $k$ nonzero coordinates. Removing $i,j,k$ leaves at least $k-3$ nonzero coordinates in $S$,
so $U_0=\ek_{k-3}(a_\ell^2:\ell\in S)>0$ (and for $k=3$, $U_0=\ek_0=1$ automatically).

Now \eqref{eq:Q2} shows that if $b\neq c$ then
\[
Q(h)=2\,U_0\,(a+b)^2\,(a+c)^2\,(b-c)^2>0,
\]
contradicting $Q(h)\le 0$. Thus $b=c$, contradicting the assumption $b\neq c$.
\end{proof}

\begin{lemma}\label{lem:no3levels-neg}
Let $a^\star\in\mathcal{M}$ be a global maximizer. Then it is impossible that $a^\star$ contains values
$a>0>-b\neq -c$.
\end{lemma}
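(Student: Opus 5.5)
The plan is to reduce this lemma to Lemma~\ref{lem:no3levels} by the sign-flip symmetry $a\mapsto -a$. The function $F(a)=e_k(a_1^2,\ldots,a_n^2)$ depends only on the squares of the coordinates, so $F(-a)=F(a)$. The constraint set $\mathcal M$ is also invariant under $a\mapsto -a$: the conditions $\sum_r a_r=0$ and $\sum_r a_r^2=1$ are both preserved. Hence, if $a^\star$ is a global maximizer of $F$ on $\mathcal M$, so is $-a^\star$.

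Now suppose, for contradiction, that $a^\star$ contains the values $a>0>-b\neq -c$ at indices $i,j,k$, with $b,c>0$ and $b\neq c$. Then $-a^\star$ carries the values $-a<0<b\neq c$ at the same indices. This is exactly the configuration excluded by Lemma~\ref{lem:no3levels} for the global maximizer $-a^\star$, and the contradiction follows.

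For completeness one can check that nothing in the proof of Lemma~\ref{lem:no3levels} depends on orientation. Under $a\mapsto -a$:
\begin{itemize}
\item the quantities $E_i$, $T_{ij}$, $T_0$, $U_0$ are unchanged, since they involve only squares;
\item the Lagrange multipliers transform as $(\lambda,\mu)\mapsto(\lambda,-\mu)$;
\item the tangent space is mapped to itself;
\item the form $Q$ of Lemma~\ref{lem:Qmu} is invariant, because both $\mu h_r^2/a_r$ and $a_ia_jT_{ij}$ are unchanged.
\end{itemize}
So applying that lemma to $-a^\star$ is legitimate.

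Concretely, the tangent vector to use at $a^\star$ itself is $h_i=b-c$, $h_j=a+c$, $h_k=-(a+b)$, the same choice as in \eqref{eq:hchoice}, and by the invariance above
\[
Q(h)=2U_0(a+b)^2(a+c)^2(b-c)^2>0.
\]
Here $U_0>0$ because $F(a^\star)>0$ forces at least $k$ nonzero coordinates. This contradicts $Q(h)\le 0$.

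The only step needing care is the bookkeeping of the sign change of $\mu$. It cancels in $\mu h_r^2/a_r$, so no real obstacle is expected; the argument is essentially immediate from symmetry.
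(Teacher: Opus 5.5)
Your proposal is correct and is the same argument as the paper's. Since $F$ depends only on squares and $\mathcal M$ is invariant under $a\mapsto -a$, the vector $-a^\star$ is again a global maximizer carrying the excluded pattern $-a<0<b\neq c$, which contradicts Lemma~\ref{lem:no3levels}. The extra bookkeeping you supply (multipliers $(\lambda,\mu)\mapsto(\lambda,-\mu)$, invariance of the tangent space and of $Q$, and the value $Q(h)=2U_0(a+b)^2(a+c)^2(b-c)^2$ at $a^\star$ itself) is accurate but unnecessary, because Lemma~\ref{lem:no3levels} is already stated for an arbitrary global maximizer.
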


\begin{proof}
If $a^\star$ had one positive and two distinct negative values, then $-a^\star$ would have one negative and two distinct positive values.
But $F$ depends only on squares, and the constraints $\sum a_i=0$, $\sum a_i^2=1$ are invariant under $a\mapsto -a$.
Hence $-a^\star$ is also a global maximizer, contradicting Lemma~\ref{lem:no3levels}.
\end{proof}

\begin{proof}[Proof of Theorem~\ref{thm:twolevel}]
Let $a^\star\in\mathcal{M}$ be a global maximizer. Since $\sum_i a_i^\star=0$, $a^\star$ has at least one positive
and at least one negative coordinate.

If among the positive coordinates there were two distinct values, then together with any negative coordinate
we would obtain a triple $-a<0<b\neq c$, contradicting Lemma~\ref{lem:no3levels}.
Therefore \emph{all positive coordinates are equal} to some $\alpha>0$.

Similarly, if among the negative coordinates there were two distinct values, then together with any positive coordinate
we would obtain a triple $a>0>-b\neq -c$, contradicting Lemma~\ref{lem:no3levels-neg}.
Therefore \emph{all negative coordinates are equal} to some $-\beta<0$.

Thus, after permuting coordinates, \(a^\star\) has the form
\[
        a^\star
        =
        (\alpha,\ldots,\alpha,
        -\beta,\ldots,-\beta,
        0,\ldots,0),
\]
with multiplicities \(\gamma_1,\gamma_2,\gamma_3\ge0\),
\(\gamma_1,\gamma_2\ge1\), and
\(\gamma_1+\gamma_2+\gamma_3=n\). This proves the claimed two-level structure for any global maximizer.
\end{proof}

\begin{remark} [why $k\ge 3$ is essential for the $Q$-argument]
The crucial positivity in \eqref{eq:Q2} comes from $U_0=e_{k-3}(\cdot)$. For $k=2$ one has $\ek_{-1}\equiv 0$,
so $U_0=0$ and the second-order obstruction disappears.
\end{remark}

Lastly,   we need to make a comparison between the extrema to find the global ones. We now take advantage of the fact of Schur-concavity. (Proposition~\ref{e_k Schur})

We now compute the value of the functional at the two-level critical points.
For the moment, we ignore possible zero coordinates and write
$\gamma:=\gamma_1$, so that $\gamma_2=n-\gamma$. Thus we set
\[
J(\gamma):=e_k(u_\gamma),
\]
where
\[
u_\gamma
=
\left(
\underbrace{\frac{n-\gamma}{n\gamma},\ldots,
\frac{n-\gamma}{n\gamma}}_{\gamma\ \mathrm{times}},
\underbrace{\frac{\gamma}{n(n-\gamma)},\ldots,
\frac{\gamma}{n(n-\gamma)}}_{n-\gamma\ \mathrm{times}}
\right).
\]
Clearly, $J(\gamma)=J(n-\gamma)$. When $n$ is even, the vector 
\(
u_\gamma
\)
have positive coordinates that sum to one, and thus from~\eqref{maj sum=1} $$u_{\gamma}\succ \left(\frac{1}{n},\ldots,\frac{1}{n} \right),$$ and this leads to $J(\gamma)\leq J(n/2)$. This is sharp, when $n$ is even, and attained at $\gamma=n-\gamma=n/2$.

Hence, in order to identify the
maximum, it suffices to show that, for $n$ odd,
\[
J(\gamma+1)\geq J(\gamma),
\qquad
1\leq \gamma\leq \left\lfloor \frac n2\right\rfloor-1
=\frac{n-3}{2}.
\]
Equivalently,  by the Schur-concavity established above, $u_{\gamma+1}\prec u_\gamma .$

\begin{lemma}\label{lem:comparison-for-gamma}
Let $1\leq \gamma\leq (n-3)/2$, and define
\[
x=
\left(
\underbrace{\frac{n-\gamma}{\gamma},\ldots,
\frac{n-\gamma}{\gamma}}_{\gamma\ \mathrm{times}},
\underbrace{\frac{\gamma}{n-\gamma},\ldots,
\frac{\gamma}{n-\gamma}}_{n-\gamma\ \mathrm{times}}
\right),
\]
and
\[
y=
\left(
\underbrace{\frac{n-\gamma-1}{\gamma+1},\ldots,
\frac{n-\gamma-1}{\gamma+1}}_{\gamma+1\ \mathrm{times}},
\underbrace{\frac{\gamma+1}{n-\gamma-1},\ldots,
\frac{\gamma+1}{n-\gamma-1}}_{n-\gamma-1\ \mathrm{times}}
\right).
\]
Then $y\prec x$. Consequently, $u_{\gamma+1}\prec u_\gamma$ and  
$J(\gamma+1)\geq J(\gamma)$.
\end{lemma}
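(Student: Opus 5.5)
Since $x=n\,u_\gamma$ and $y=n\,u_{\gamma+1}$, the statement $u_{\gamma+1}\prec u_\gamma$ is equivalent to $y\prec x$. The inequality $J(\gamma+1)\ge J(\gamma)$ then follows from the Schur-concavity of $e_k$ on $\mathbb{R}^n_+$ (Proposition~\ref{e_k Schur}). So the whole task is the majorization $y\prec x$, and I will verify it directly from the definition.

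\textbf{Equal sums.} The sum of the coordinates of $x$ is
\[
\gamma\cdot\frac{n-\gamma}{\gamma}+(n-\gamma)\cdot\frac{\gamma}{n-\gamma}=n,
\]
and the same computation with $\gamma+1$ in place of $\gamma$ gives $n$ for $y$.

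\textbf{Ordering of the levels.} Since $\gamma<\gamma+1\le (n-1)/2<n/2$, the large levels satisfy
\[
X:=\frac{n-\gamma}{\gamma}>Y:=\frac{n-\gamma-1}{\gamma+1}>1,
\]
and the small levels satisfy
\[
x_s:=\frac{\gamma}{n-\gamma}<y_s:=\frac{\gamma+1}{n-\gamma-1}<1.
\]
Hence the decreasing rearrangement of $x$ is $\gamma$ copies of $X$ followed by $n-\gamma$ copies of $x_s$. The decreasing rearrangement of $y$ is $\gamma+1$ copies of $Y$ followed by $n-\gamma-1$ copies of $y_s$.

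\textbf{Partial sums.} Let $P_m(x)$ and $P_m(y)$ denote the sums of the $m$ largest coordinates; I must show $P_m(y)\le P_m(x)$ for all $m$. I split into three ranges of $m$.

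\emph{Case $m\le\gamma$.} Here $P_m(y)=mY\le mX=P_m(x)$.

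\emph{Case $m=\gamma+1$.} Here I need
\[
(\gamma+1)Y=n-\gamma-1\;\le\; \gamma X+x_s=n-\gamma+\frac{\gamma}{n-\gamma},
\]
which is immediate.

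\emph{Case $m\ge\gamma+1$.} The remaining entries of $x$ (all equal to $x_s$) are smaller than the remaining entries of $y$ (all equal to $y_s$). So it is cleaner to compare tails: for $r=n-m\le n-\gamma-1$ one has $n-P_m(x)=r\,x_s\le r\,y_s=n-P_m(y)$, i.e.\ $P_m(y)\le P_m(x)$. Because the sums are equal, this covers every $m\ge\gamma+1$.

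\textbf{Where the difficulty lies.} The argument is essentially routine. The only point needing care is the case split according to whether $m$ exceeds $\gamma$. That split is where the hypothesis $\gamma+1<n/2$, i.e.\ $\gamma\le (n-3)/2$ for odd $n$, is used: it guarantees that $Y>1>y_s$ and $X>Y$, so both rearrangements have the stated block structure. Both vectors are two-level vectors with the same sum, with $x$ having the more extreme levels and the shorter top block. The majorization is therefore the standard statement that moving a two-level vector toward balance decreases it in the majorization order.
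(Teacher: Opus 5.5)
Your proposal is correct and follows the paper's proof: check that both vectors are in decreasing order with total $n$, verify the partial sums for $m\le\gamma$ and $m=\gamma+1$, and then invoke Schur-concavity of $e_k$ (using $x=n\,u_\gamma$, $y=n\,u_{\gamma+1}$). The one difference is the range $m\ge\gamma+1$: the paper computes the difference of partial sums explicitly as $\frac{n(n-\gamma-t)}{(n-\gamma)(n-\gamma-1)}\ge 0$, whereas you compare the complementary tails $(n-m)\,x_s\le (n-m)\,y_s$ using equal totals and $x_s<y_s$, which is slightly cleaner and makes the separate $m=\gamma+1$ case redundant.
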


\begin{proof}
Since $\gamma\leq (n-3)/2$, both vectors are already written in decreasing
order. Moreover, $\sum_{i=1}^n x_i=\sum_{i=1}^n y_i=n.$
 
It remains to compare the partial sums. First, if $1\leq m\leq \gamma$, then
\[
\sum_{i=1}^m x_i
=
m\frac{n-\gamma}{\gamma}
\geq
m\frac{n-\gamma-1}{\gamma+1}
=
\sum_{i=1}^m y_i .
\]

Next, for $m=\gamma+1$, we have
\[
\sum_{i=1}^{\gamma+1} x_i
=
\gamma\frac{n-\gamma}{\gamma}
+
\frac{\gamma}{n-\gamma}
\geq
n-\gamma-1
=
\sum_{i=1}^{\gamma+1} y_i .
\]

Finally, let $m=\gamma+t$, where $2\leq t\leq n-\gamma$. Then the desired
inequality is
\[
n-\gamma+t\frac{\gamma}{n-\gamma}
\geq
n-\gamma-1+(t-1)\frac{\gamma+1}{n-\gamma-1}.
\]
This is equivalent to
\[
\frac{n(n-\gamma-t)}{(n-\gamma)(n-\gamma-1)}\geq 0,
\]
which holds because $t\leq n-\gamma$. Therefore $y\prec x$, as claimed.
The final assertion follows from the Schur-concavity of the functional.
\end{proof}

We now consider the case where zero coordinates are present. Put
\[
N:=\gamma_1+\gamma_2=n-\gamma_3 .
\]
For a critical point with $\gamma_3$ zero coordinates, the corresponding
value is
\[
\begin{aligned}
J(\gamma_1,\gamma_2,\gamma_3)
:=
e_k\Bigg(
&\underbrace{
\frac{\gamma_2}{\gamma_1(\gamma_1+\gamma_2)},\ldots,
\frac{\gamma_2}{\gamma_1(\gamma_1+\gamma_2)}
}_{\gamma_1\ \mathrm{times}},
\\
&\underbrace{
\frac{\gamma_1}{\gamma_2(\gamma_1+\gamma_2)},\ldots,
\frac{\gamma_1}{\gamma_2(\gamma_1+\gamma_2)}
}_{\gamma_2\ \mathrm{times}},
\underbrace{0,\ldots,0}_{\gamma_3\ \mathrm{times}}
\Bigg).
\end{aligned}
\]
For fixed \(N\), the preceding argument shows that the maximum among such
two-level configurations is attained at the most balanced split; namely,
at $\gamma_1=\gamma_2=N/2$ if $N$ is even, and at
\[
\gamma_1=\frac{N-1}{2},
\qquad
\gamma_2=\frac{N+1}{2},
\]
if $N$ is odd. 

It remains to compare these values as the number of zero coordinates
varies. 

When $N$ is even, the comparison for the minimum follows from the standard
majorization relation
\[
\left(
\underbrace{\frac{1}{N-2},\ldots,\frac{1}{N-2}}_{N-2\ \mathrm{times}},
0,0
\right)
\succ
\left(
\underbrace{\frac{1}{N},\ldots,\frac{1}{N}}_{N\ \mathrm{times}}
\right).
\]
By Schur-concavity, this implies that introducing two additional nonzero
coordinates in the balanced configuration decreases the value of the
functional.

\begin{lemma}\label{lem:comparison-N-odd}
Let $N$ be odd. Define
\[
z=
\left(
\underbrace{
\frac{N+1}{N(N-1)},\ldots,\frac{N+1}{N(N-1)}
}_{\frac{N-1}{2}\ \mathrm{times}},
\underbrace{
\frac{N-1}{N(N+1)},\ldots,\frac{N-1}{N(N+1)}
}_{\frac{N+1}{2}\ \mathrm{times}},
0,0
\right)
\]
and
\[
w=
\left(
\underbrace{
\frac{N+3}{(N+1)(N+2)},\ldots,\frac{N+3}{(N+1)(N+2)}
}_{\frac{N+1}{2}\ \mathrm{times}},
\underbrace{
\frac{N+1}{(N+3)(N+2)},\ldots,\frac{N+1}{(N+3)(N+2)}
}_{\frac{N+3}{2}\ \mathrm{times}}
\right).
\]
Then $z\succ w$.
\end{lemma}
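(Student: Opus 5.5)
We will verify the definition of majorization directly. The plan is to reduce the infinitely many partial-sum inequalities to finitely many by piecewise linearity, and then check those by elementary algebra.

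First, both vectors are written in decreasing order, and their totals agree. Set
\[
A=\tfrac{N+1}{N(N-1)},\quad B=\tfrac{N-1}{N(N+1)},\quad C=\tfrac{N+3}{(N+1)(N+2)},\quad D=\tfrac{N+1}{(N+3)(N+2)}.
\]
Then $A\ge B\ge 0$ and $C\ge D$, and
\[
\tfrac{N-1}{2}A+\tfrac{N+1}{2}B=\tfrac{N+1}{2N}+\tfrac{N-1}{2N}=1,
\qquad
\tfrac{N+1}{2}C+\tfrac{N+3}{2}D=1 .
\]
So $\sum z_i=\sum w_i=1$, and it remains to show that
\[
\Phi(m):=\sum_{i\le m}z_i-\sum_{i\le m}w_i\ \ge\ 0 \qquad\text{for } 0\le m\le N+2 .
\]

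Second, $\Phi$ is linear on each interval between consecutive breakpoints, namely where either vector changes level. The breakpoints are
\[
0,\ \tfrac{N-1}{2},\ \tfrac{N+1}{2},\ N,\ N+2 .
\]
It therefore suffices to check $\Phi\ge 0$ at these values. We have $\Phi(0)=\Phi(N+2)=0$. At $m=N$ we get $\Phi(N)=1-(1-2D)=2D>0$.

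At $m=\frac{N-1}{2}$, we have $\Phi=\frac{N-1}{2}(A-C)$, so we need $A\ge C$. This is $(N+1)^2(N+2)\ge N(N-1)(N+3)$, which is immediate: the left side dominates term by term after expansion, since $N^3+4N^2+5N+2\ge N^3+2N^2-3N$.

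Third, the only genuinely nontrivial breakpoint is $m=\frac{N+1}{2}$. Here we need
\[
\frac{N+1}{2N}+\frac{N-1}{N(N+1)}\ \ge\ \frac{N+3}{2(N+2)} .
\]
The left side already exceeds $\frac{N+1}{2N}$. Moreover, $\frac{N+1}{2N}\ge\frac{N+3}{2(N+2)}$ is equivalent to $(N+1)(N+2)\ge N(N+3)$, i.e. $2\ge 0$. So this case also holds.

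Combining the breakpoint checks with linearity of $\Phi$ between them gives $\Phi\ge 0$ everywhere, hence $z\succ w$. The main obstacle is only bookkeeping: correctly identifying the breakpoints and the slopes of $\Phi$ on each piece, in particular that the first $\frac{N-1}{2}$ entries of $z$ are compared against $C$ while the $(\frac{N+1}{2})$-th entry is compared against $C$ using $B$. Once that is set up, each inequality is a one-line polynomial comparison.
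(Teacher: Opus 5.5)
Your proof is correct and follows the same strategy as the paper: both vectors are already sorted with total mass $1$, so you check the partial-sum inequalities directly. The paper instead writes out closed-form differences on each range of $m$ (for instance the expression affine in $t$ with numerator $(N+1)(N^2+2N+3)+2(N^2-3)t$). Your observation that $\Phi$ is affine between the breakpoints $0,\tfrac{N-1}{2},\tfrac{N+1}{2},N,N+2$ reduces this to endpoint checks, and it also supplies the one-line justification at $m=\tfrac{N+1}{2}$ that the paper only asserts; both arguments implicitly use $N\ge 3$ so that $A$ is defined.
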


\begin{proof}
Both vectors are already written in decreasing order, and a direct
calculation gives $\sum_i z_i=\sum_i w_i=1.$ It remains to compare the partial sums.

First, let $1\leq m\leq (N-1)/2$. Then
\[
\sum_{i=1}^m z_i
=
m\frac{N+1}{N(N-1)}
\geq
m\frac{N+3}{(N+1)(N+2)}
=
\sum_{i=1}^m w_i.
\]

Next, for $m=(N-1)/2+1$, we need to show that
\[
\frac{N-1}{2}\frac{N+1}{N(N-1)}
+
\frac{N-1}{N(N+1)}
\geq
\frac{N+1}{2}\frac{N+3}{(N+1)(N+2)}.
\]
which is true.

Now let
\[
m=\frac{N-1}{2}+t,
\qquad
2\leq t\leq \frac{N+1}{2}.
\]
Then the desired inequality is
\[
\frac{N-1}{2}\frac{N+1}{N(N-1)}
+
t\frac{N-1}{N(N+1)}
\geq
\frac{N+1}{2}\frac{N+3}{(N+1)(N+2)}
+
(t-1)\frac{N+1}{(N+3)(N+2)}.
\]
The difference between the two sides is
\[
\frac{
(N+1)(N^2+2N+3)+2(N^2-3)t
}{
N(N+1)(N+2)(N+3)
},
\]
which is nonnegative for every $N\geq 3$ and every admissible $t$.

Finally, let
\[
m=\frac{N-1}{2}+t,
\qquad
\frac{N+1}{2}\leq t\leq \frac{N+3}{2}.
\]
In this range the partial sum of $z$ contains all its nonzero coordinates,
and hence the desired inequality becomes
\[
\frac{N-1}{2}\frac{N+1}{N(N-1)}
+
\frac{N+1}{2}\frac{N-1}{N(N+1)}
\geq
\frac{N+1}{2}\frac{N+3}{(N+1)(N+2)}
+
(t-1)\frac{N+1}{(N+3)(N+2)}.
\]
The difference between the two sides is
\[
\frac{(N+1)(N+5-2t)}{2(N+2)(N+3)}.
\]
This is nonnegative because $t\leq (N+3)/2<(N+5)/2$.
\end{proof}

\begin{proof}[Proof of Theorem~\ref{thm: sharp max e_k}]
    By Theorem~\ref{thm:twolevel}, every global maximizer has two nonzero levels, possibly with
zero coordinates. For a fixed support size \(N\), Lemma~\ref{lem:comparison-for-gamma} and Schur concavity
show that the maximum occurs at the most balanced split. Lemma~\ref{lem:comparison-N-odd} then shows
that increasing the support size increases the value, so the global maximum occurs
with full support. This gives the stated vector.
\end{proof}

\begin{proof}[Proof of Theorem~\ref{thm:section B_1^n centroid}]
    This follows directly from the Theorem~\ref{thm: sharp max e_k} and the representation~\eqref{e_k(T+x)} together with the \(k=2\) case and the trivial \(k=1\) case.
\end{proof}

\bibliographystyle{amsplain}
\bibliography{bibli}

\appendix

\end{document}